\renewcommand{\phi}{\varphi}
\newcommand{\C}{{\mathbb{C}}}
\newcommand{\R}{{\mathbb{R}}}
\newcommand{\Z}{{\mathbb{Z}}}
\newcommand{\N}{{\mathbb{N}}}
\newtheorem{Theorem}{Theorem}[section]
\newtheorem{Lemma}[Theorem]{Lemma}
\newtheorem{Corollary}[Theorem]{Corollary}
\newtheorem{Proposition}[Theorem]{Proposition}
\newtheorem{Definition}[Theorem]{Definition}
\newtheorem{Remark}[Theorem]{Remark}
\def\Z{{\mathbb Z}}
\def\R{{\mathbb R}}
\def \C{{\mathbb C}}
\def \0{\lambda_{0}}
\def \J{{\mathcal J}}
\newtheorem{Example}[Theorem]{Example}
\begin{document}

\title[$J^+$-like invariants of  the restricted three-body problem]{$J^+$-like invariants of periodic orbits of the second kind in the restricted three-body problem}

\author{Joontae Kim and Seongchan Kim}
 \address{School of Mathematics, Korea Institute for Advanced Study, 85 Hoegiro, Dongdaemun-gu, Seoul 02455, Republic of Korea}
 \email {joontae@kias.re.kr}
 \address{Universit\"at Augsburg, Universit\"atsstrasse 14, D-86159 Augsburg, Germany}
 \email {seongchan.kim@math.uni-augsburg.de}
\date{\today}



\begin{abstract}
We determine  three invariants:   Arnold's $J^+$-invariant as well as $\mathcal{J}_1$ and $\mathcal{J}_2$ invariants,  which were introduced by Cieliebak-Frauenfelder-van Koert,   of periodic orbits of the second kind near the heavier primary in the restricted three-body problem, provided that the mass ratio is sufficiently small. \\\\
\emph{Keywords:} Arnold's $J^+$-invariant, the rotating Kepler problem, the planar circular restricted three-body problem, periodic orbits of second kind, Stark-Zeeman homotopy \\
\emph{2010 MSC:} 53A04, 57R42, 70F05, 70F07, 05C10
\end{abstract}

\maketitle

\setcounter{tocdepth}{1}
\tableofcontents

\section{Introduction}

The planar circular restricted three-body problem (PCR3BP) describes motions of a massless body under the influence of a Newtonian potential with two primaries.  Because this system is not completely integrable, finding periodic orbits is not easy. One approach to attack this problem  is finding families of periodic orbits. This approach was taken   for example by Hill \cite{Hill}, Darwin \cite{Darwin1, Darwin2}, Poincar\'e \cite{Poin}, Moulton \cite{Moulton}, Ljapunov \cite{Ljapunov}, and also  H\'enon \cite{Henon} and Bruno \cite{Bruno} for numerical investigations.

To find a starting point of a family of periodic orbits, one may vary dynamical systems. For example, if the mass ratio of the two primaries goes to either zero or one, the PCR3BP becomes in the limit   the rotating Kepler problem which is completely integrable. Also, if one switches off the rotating term, then the PCR3BP becomes the Euler problem of two fixed centers which is also completely integrable.  Since a periodic orbit  in completely integrable systems is well known,  one can take such a periodic orbit as a starting point of a family. 

In this paper, we consider families of periodic orbits starting from the ones in the rotating Kepler problem by following the approach  whose origin goes back to  Poincar\'e. For such families, the parameter will be the mass ratio of the two primaries. Fixing the energy and varying the mass ratio, Poincar\'e  obtained   periodic orbits in the PCR3BP from periodic orbits in the rotating Kepler problem \cite{Poin}. In the rotating Kepler problem, there are two kinds of periodic orbits: circular orbits and $T_{k,l}$-type orbits (including collision orbits). A circular orbit with a positive or a negative angular momentum is called the \textit{retrograde circular orbit} or the \textit{direct circular orbit}, respectively.  We remark that this definition depends on the sign of the magnetic term in the Hamiltonian, see Section \ref{sectionRKP}. Those circular orbits come from the retrograde and direct circular orbits in the inertial Kepler problem.  On the other hand, $T_{k,l}$-type orbits   come from   elliptic orbits in the inertial Kepler problem, more precisely they are  $k$-fold Kepler ellipses in an $l$-fold rotating coordinate system. The family of $T_{k.l}$-type orbits is called the \textit{$T_{k,l}$-torus family}. This family bifurcates from a  $|k-l|$-fold covered direct circular orbit and ends at a $(k+l)$-fold covered retrograde circular orbit, see Proposition \ref{probifurcation}. Note that the $T_{k,l}$-torus family is in fact a 2-parameter family: one variable represents the rotational symmetry and the other one represents      eccentricity  or, equivalently,  the energy.  Since the rotation of periodic orbits does not affect   the shape, without loss of generality we may think of the torus family as a 1-parameter family whose parameter is  eccentricity (or energy).  Note that in each $T_{k,l}$-torus family there are two types of $T_{k,l}$-orbits: one is direct and the other is retrograde, see Section \ref{sec:bif}.

Poincar\'e defined two classes of periodic orbits in the PCR3BP obtained from the ones in the rotating Kepler problem by varying the mass ratio: \textit{periodic orbits of the first kind} which  are obtained from the circular orbits and  \textit{periodic orbits of the second kind} which are obtained from  $T_{k,l}$-type orbits. The existence of periodic orbits of the first kind is discussed in detail by Birkhoff \cite{Birkhoff} and that of periodic orbits of the second kind by    Arenstorf \cite{Aren} and Barrar \cite{Barrar}. Note that in the $T_{k,l}$-torus family, there exists a unique torus consisting of collision orbits. In \cite{Gia}, Giacaglia showed that even collision orbits   can be continued to periodic orbits of the second kind.  Moreover, if $k > 2 \sqrt{2} l$,  for a sufficiently small mass ratio, a certain subfamily of the $T_{k,l}$-torus family in the rotating Kepler problem continuates to a 1-parameter family of periodic orbits of the second kind in the PCR3BP  which bifurcates from   a direct periodic orbit of the first kind and dies at a retrograde periodic orbit of the first kind, see \cite{Schmidt}.

In order to apply  holomorphic curve techniques in symplectic geometry to attack the dynamics, periodic orbits of the first kind are essential.  Indeed,  below the critical Jacobi energy direct and  retrograde circular orbits bound (disk-like) global surfaces of section in the rotating Kepler problem \cite{RKP} and in the PCR3BP for small mass ratios \cite{McGehee}.   Moreover, the Conley-Zehnder indices of $T_{k,l}$-type orbits can be computed from the Conley-Zehnder indices of the circular orbits, see \cite{RKP}. 

In this paper, however, we are interested in periodic orbits of the second kind and  determine their invariants. Since they are planar curves, a natural candidate for an invariant is   Arnold's $J^+$-invariant \cite{Arnold}: an  invariant of families of planar immersions which does not change under crossings through triple intersections and inverse self-tangencies. Hence, it provides an invariant for families of periodic orbits with finitely many triple intersections and inverse self-tangencies. Note that this invariant can change under direct self-tangencies, see Section \ref{subsection2.2}.

However, in a family of periodic orbits in the PCR3BP the particle can collide with one of the primaries or touch the boundary of the Hill's region. For the former disaster, denoted by $(I_0)$, birth or death of loops around the primary happens. For the latter one, birth or death of interior or exterior loops (depending on the connected component of the Hill's region in which the periodic orbit lies) happens. To see this picture more precisely, recall that if the energy is less than the first critical value, the Hill's region consists of three connected components: two bounded components and one unbounded component. If the latter disaster happens in one of the bounded components, then birth or death of exterior loops through cusps at the boundary of the Hill's region occurs, which we denote by $(I_{\infty})$. If the disaster happens in the unbounded component, then we  see a similar picture with interior loops instead of exterior loops, which we denote by $(I_{-\infty})$. This phenomenon occurs in more general systems, i.e.,  Stark-Zeeman systems, see  Section \ref{seubsection2.3}. Note that Arnold's $J^+$-invariant does not change under $(I_{\infty})$, but it does change under $(I_0)$ and $(I_{-\infty})$. This implies that the $J^+$-invariant is not a suitable invariant for families of periodic orbits in the PCR3BP. 

Recently,   Cieliebak-Frauenfelder-van Koert introduced two invariants \cite{invariant}: $\mathcal{J}_1$ and $\mathcal{J}_2$ invariants,  for families of periodic orbits in Stark-Zeeman systems   which are also invariant under $(I_0)$ (but not under $(I_{-\infty})$). Hence, they are suitable to study periodic orbits in the bounded Hill's region.  These two invariants are based on   Arnold's $J^+$-invariant, more precisely     the $\mathcal{J}_1$ invariant is defined by a combination of the  $J^+$-invariant and the winding number of a periodic orbit which plays a role in correcting errors arising from birth or death of additional loops. On the other hand, the $\mathcal{J}_2$ invariant is defined as the $J^+$-invariant of the pulled back image of a periodic orbit under the Levi-Civita embedding.  Note that all three invariants  $J^+$, $\mathcal{J}_1$ and $\mathcal{J}_2$  do not depend on the choice of the orientation of an orbit.

The main result of this paper is to determine the invariants of families of periodic orbits of the second kind in the PCR3BP. Before stating the theorem, we need to clarify the possible cases for $k$ and $l$.  Note that if the greatest common divisor of $k$ and $l$ is $m$, then every $T_{k,l}$-type orbit is $m$-fold covered. In this paper, we consider only simple  periodic orbits and hence we always assume that $k$ and $l$ are coprime. We exclude the case $k=1=l$ whose corresponding torus orbits are the Kepler ellipses. Note that the winding number of $T_{k,l}$-type orbits (with respect to the origin) is odd if and only if $k$ and $l$ have different parity.

We are now in position to state the main theorem.

\begin{Theorem}\label{maintheorem} 
Let $k>l$ with $k,l$ coprime. All  $T_{k,l}$-type orbits have the same $\mathcal{J}_1$ and $\mathcal{J}_2$ invariants, which are given by
\begin{eqnarray*}
\mathcal{J}_1 &=&  1 - k + \frac{k^2}{2} - \frac{l^2}{2}  \\
\mathcal{J}_2 &=& \begin{cases} (k-1)^2  -l^2     & \text{ if  $k$ and $l$ have different parity} ,   \\    1 - k + \frac{k^2}{4} - \frac{l^2}{4}     &  \text{ if $k$ and $l$ are both odd}  .     \end{cases}
\end{eqnarray*}
Consequently, there exists a small $\mu_{k,l}>0$ such that for each $\mu < \mu_{k,l}$ all periodic orbits   of the second kind near the heavier primary in the restricted three-body problem, which are obtained from $T_{k,l}$-type orbits, have the same invariants as above.
\end{Theorem}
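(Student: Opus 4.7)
The plan is to compute $\mathcal{J}_1$ and $\mathcal{J}_2$ first in the rotating Kepler problem and then transfer the values to the PCR3BP via Poincar\'e continuation for sufficiently small $\mu$. Because $\mathcal{J}_1$ and $\mathcal{J}_2$ are, by construction, preserved under homotopies of periodic orbits that avoid only a short list of forbidden moves (direct self-tangencies and, for these two invariants, the $(I_{-\infty})$ events), any homotopy within the $T_{k,l}$-torus family that avoids these forbidden moves shows that the invariants are independent of the eccentricity parameter. The family interpolates between a $(k-l)$-fold direct circular orbit and a $(k+l)$-fold retrograde circular orbit through a unique collision orbit (Proposition~\ref{probifurcation}), so my first task is to verify that along this interpolation only triple points, inverse self-tangencies and the $(I_0)$ event at collision occur. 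Since every orbit in the family is a Kepler ellipse seen in a rotating frame and lies in the bounded Hill region around the heavier primary, this is an explicit check using the closed-form parametrization $r(\theta)=p/(1+e\cos\theta)$ combined with a rotation of angular speed fixed by $k/l$.

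Given this invariance, I would evaluate $\mathcal{J}_1$ at one convenient member of the family. A small generic perturbation of the direct circular endpoint produces a rose-type immersion close to an $(k-l)$-fold cover of a circle, whose winding number is $w=\pm(k-l)$ and whose $J^+$ is computed by counting mutual crossings of the rose petals. Substituting into the defining formula of $\mathcal{J}_1$ from~\cite{invariant} (which combines $J^+$ with a winding-number correction designed to cancel the $(I_0)$-contribution) and simplifying should yield $1 - k + \tfrac{k^2}{2} - \tfrac{l^2}{2}$. Evaluating instead at the retrograde endpoint (a $(k+l)$-fold covered circle) must produce the same number, which provides a valuable internal consistency check.

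For $\mathcal{J}_2$ I pass to the Levi-Civita lift. Since the Levi-Civita map is a double cover branched at the primary, the topology of the lift depends on the parity of $w$: by the remark preceding the theorem $w$ is odd exactly when $k$ and $l$ have different parity, so the lift is a connected curve that traverses the original orbit twice; when $k$ and $l$ are both odd, $w$ is even and the lift splits into two disjoint copies. In the first case I would compute $J^+$ of the lift at the direct-circular limit, where it becomes close to a $2(k-l)$-fold covered circle but with a different rotation count due to the doubling of the angular coordinate; in the second case, $J^+$ of the lift is twice $J^+$ of one component plus the intersection count between the two sheets. In both cases a careful count, handled at the same circular limit as for $\mathcal{J}_1$, should produce the two expressions in the theorem. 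The main obstacle will be precisely this bookkeeping: obtaining the correct numbers of self-intersections, signs of crossings, and (for $\mathcal{J}_2$) inter-sheet intersections of the Levi-Civita lift in the chosen limit model. Once the computation in the rotating Kepler problem is complete, the transfer to the PCR3BP is essentially automatic: by Poincar\'e continuation (as cited via~\cite{Schmidt} and~\cite{Aren,Barrar}) each $T_{k,l}$-orbit deforms continuously to a periodic orbit of the second kind through a homotopy of immersed planar curves, and for $\mu<\mu_{k,l}$ the homotopy stays near the heavier primary and therefore triggers no $(I_{-\infty})$; invariance under the remaining moves gives the final statement.
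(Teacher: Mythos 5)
Your overall strategy coincides with the paper's: establish that the $e$-homotopy of the $T_{k,l}$-family is a Stark--Zeeman homotopy (so $\mathcal{J}_1,\mathcal{J}_2$ are constant along it), evaluate at a low-eccentricity representative near the direct circular endpoint where the orbit is an explicit rose-type immersion, pull back under the Levi-Civita map for $\mathcal{J}_2$, and transfer to the PCR3BP by a disaster-free $\mu$-homotopy. Two points in your plan, however, would derail the computation. First, in the case where $k$ and $l$ are both odd (winding number even), you propose to compute ``$J^+$ of the lift'' as twice the $J^+$ of one component plus the inter-sheet intersection count. That is not the quantity you need: by definition $\mathcal{J}_2(K)=J^+(\widetilde{K})$ for a \emph{single} component $\widetilde{K}$ of $L^{-1}(K)$. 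The entire difficulty in this case is to decide, for each double point of the full preimage, whether it is a self-intersection of $\widetilde{K}$ or an intersection of $\widetilde{K}$ with the other sheet $\widetilde{K}'$; the paper resolves this using the rotational and reflection symmetries (alternating double points on each ray $\theta=j\pi/2k$ belong to alternating components), arriving at $k(|k-l|/2-1)$ genuine double points of $\widetilde{K}$ and $(|k-l|/2-1)$ layers, and only then applies Viro's formula. Your bookkeeping scheme, as stated, computes a different number. Relatedly, in the odd-winding case the connected lift of a $|k-l|$-fold circle winds $|k-l|$ times (over a doubled parameter interval), not $2|k-l|$ times; using the doubled rotation count in the circular-limit model would give wrong values.

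Second, the computation you defer as ``bookkeeping'' is in fact the substance of the proof. The paper does not merely count petal crossings: it proves that all double points lie on the rays $\theta=j\pi/k$, that the complement decomposes into $|k-l|-1$ layers of $k$ components each with winding numbers $l-k-i$ in the $i$th layer, that every double point in the $i$th circle of double points has index $l-k-i$, and then feeds all of this into Viro's formula $J^+=1+n-\sum_C w_C^2+\sum_p(\mathrm{ind}_p)^2$ to get $J^+=1+k(|k-l|-1)-|k-l|^2$. Also note that for $k>l$ the family does encounter the event $(I_\infty)$ (the direct orbit touches the boundary of the Hill's region exactly once, at $e=e_{k,l}^{\infty}$), which is absent from your list of disasters; it is an allowed move, but your claimed verification that only collisions, triple points and inverse self-tangencies occur would fail as stated, and one must also address the possibility of infinitely many inverse self-tangencies on the post-$e_{k,l}^{\infty}$ interval, which the paper handles by extending $J^+$ to immersions with inverse self-tangencies.
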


\begin{Remark} \rm The previous theorem is a special case of Proposition \ref{jplusformeusladddd}. In the proposition,  we also provide formulas for  the two invariants $\mathcal{J}_1$ and $\mathcal{J}_2$ for $k<l$ and the $J^+$-invariant for any $k$ and $l$.
\end{Remark}

\begin{Remark}\rm  The two invariants $\mathcal{J}_1$ and $\mathcal{J}_2$ are not independent if the winding number is odd. Indeed, in this case they are related by $\mathcal{J}_2 = 2 \mathcal{J}_1   -1$, see \cite[Proposition 6]{invariant}. Observe that this is confirmed by the formula above.
\end{Remark}

\begin{Remark}\rm  Recall that switching off the rotating terms from the Hamiltonian of the PCR3BP gives rise to another integrable system called the Euler problem of two fixed centers.  The invariants for periodic orbits in the Euler problem will be determined  in the forthcoming paper \cite{Kim}. Once such invariants are determined, comparing those with the invariants for the rotating Kepler problem, given in this paper, one can see whether periodic orbits in the two integrable systems can be connected by Stark-Zeeman homotopies. 
\end{Remark}

\;\;\;

 {Acknowledgements:} We would like to express our deepest gratitude to  Urs Frauenfelder and Otto van Koert for  continued support and encouragement. We also would like to thank  the Institute for Mathematics of University of Augsburg for providing a supportive research environment and  the unknown referee for  very valuable comments.  J. Kim is partially supported by the National Research Foundation of Korea grant NRF-2016R1C1B2007662 funded by the Korean government and S. Kim by Deutsche Forschungsgemeinschaft grants CI 45/8-1 and FR 2637/2-1.

\section{Invariants of  periodic orbits}\label{sectioninvariant}

In this section, we briefly recall some relevant information on  the three invariants $J^+$, $\mathcal{J}_1$ and $\mathcal{J}_2$ of plane curves. We also introduce  Viro's formula for the $J^+$-invariant, which is useful for the computational purpose.

\subsection{Immersions in the plane}
Since we are interested in (simple covered) planar periodic orbits, throughout the paper a curve always means a map $\gamma : S^1 \rightarrow \C$ from the circle into the complex plane. By an \textit{immersion}, we mean an immersed curve   $\gamma:S^1\to \C$ which is considered up to orientation preserving reparametrization. By abuse of  notation, an immersion is identified with its image $K:=\gamma(S^1)\subset \C$. An immersion is said to be   \textit{generic} if it has only transverse double points. One might expect that generically, a homotopy of immersions consists only of generic immersions. However, this is not the case. Indeed, during a homotopy of immersions, triple points might appear. A more serious situation is the appearance of \textit{new} double points at which  the tangent vectors are parallel. This event is called a \textit{direct self-tangency} (or an \textit{inverse self-tangency}) if the two tangent vectors point in the same direction (or in  opposite directions).   By a  \textit{generic homotopy} $(K^s)_{s\in [0,1]}$, we mean a homotopy of generic immersions except at finitely many $s\in (0,1)$ at which the aforementioned  three disasters can occur, see Figure \ref{threedisasters}. Note that in particular under the crossing through a self-tangency the number of double points changes. 
\begin{figure}[h]
\begin{subfigure}{0.8\textwidth}
   \centering
  \includegraphics[width=1\linewidth]{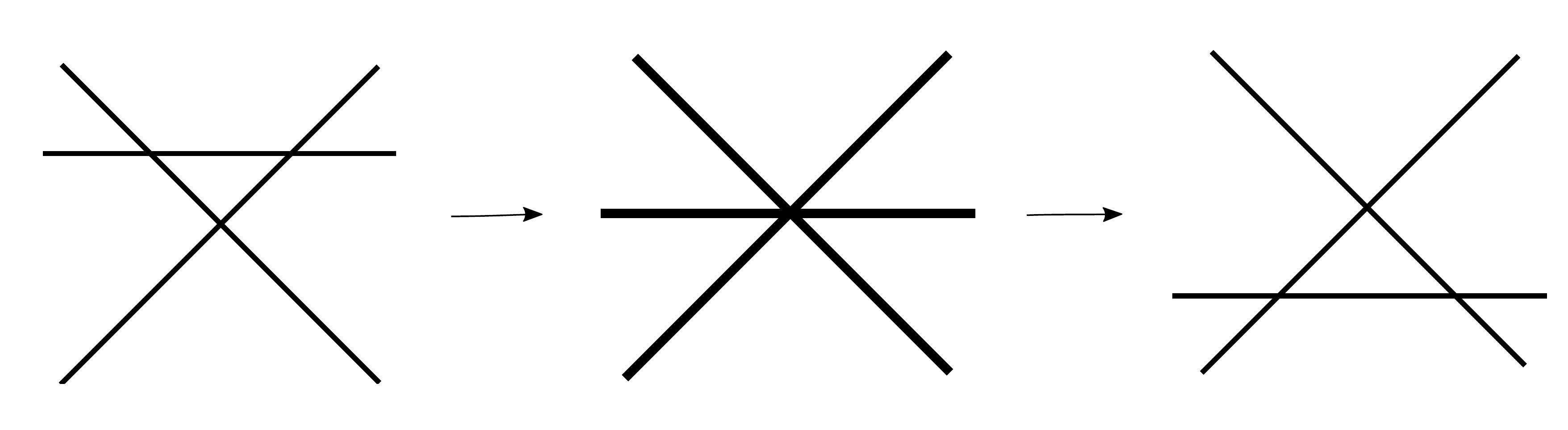}
  \caption{ A triple point}
  \label{  }
\end{subfigure}
\begin{subfigure}{0.8\textwidth}
  \centering
  \includegraphics[width=1\linewidth]{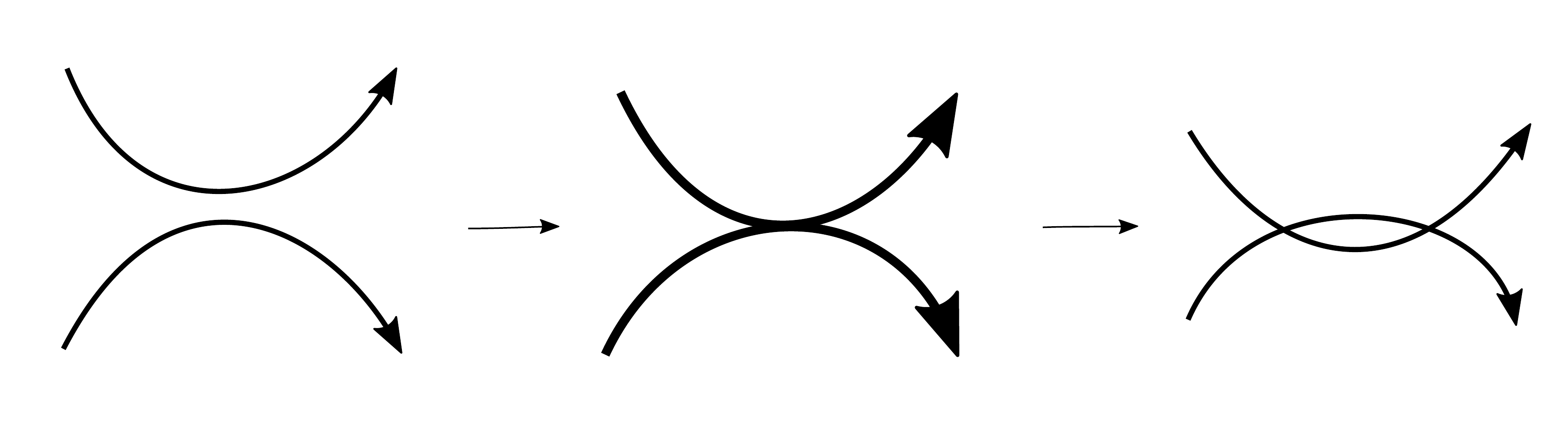}
  \caption{ A direct self-tangency}
 \label{directself}
\end{subfigure}
\begin{subfigure}{0.8\textwidth}
  \centering
  \includegraphics[width=1\linewidth]{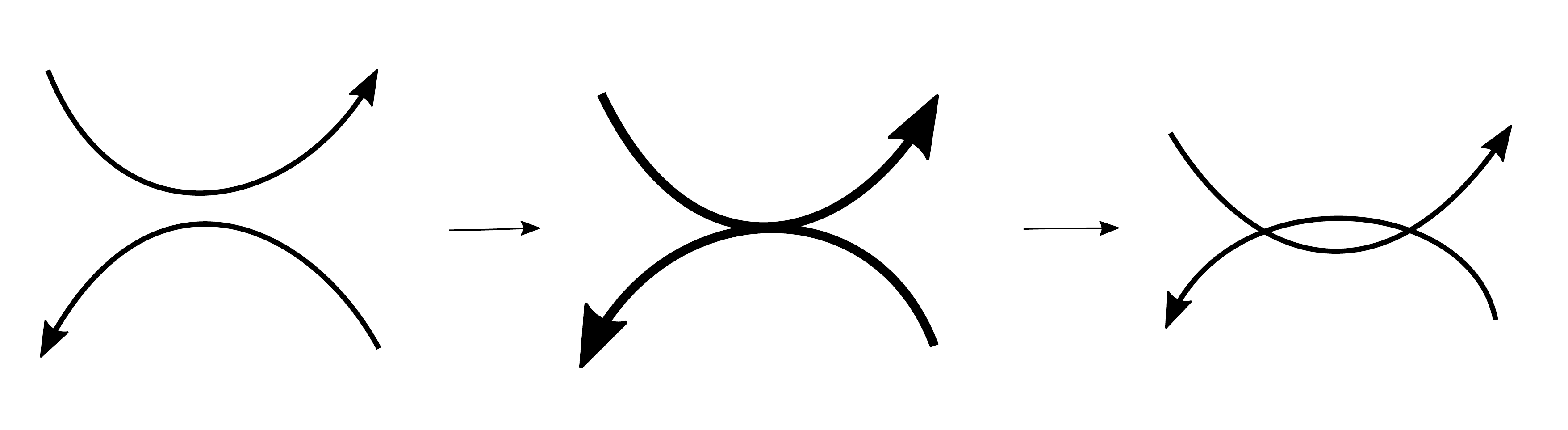}
  \caption{ An inverse self-tangency}
\end{subfigure}
\caption{ Three disasters which can occur during a generic homotopy}
 \label{threedisasters}
\end{figure}

\subsection{Arnold's $J^+$-invariant}\label{subsection2.2}
Arnold's $J^+$-invariant, denoted by $J^+(K)\in 2\Z$ for a generic immersion $K\subset \C$, is an invariant that is characterized by the following properties:
\begin{enumerate}[label=(\roman*)]
\item it is independent of the orientation;  \vspace{1mm}

\item it does not change under crossings through an inverse self-tangency or a triple point;  \vspace{1mm}

\item it increases and decreases by two under a positive crossing and a negative crossing through a direct self-tangency, respectively. By a positive or negative crossing, we mean a direct self-tangency through which the number of double points increases or decreases, respectively, see Figure \ref{directself}; \vspace{1mm}

\item  define standard curves $K_j$, $j \in \Z_{\geq 0}$,  as follows: $K_0$ is the figure eight and for each $j \neq 0$, the  curve $K_j$ is the circle with $(j-1)$ interior loops whose rotation number equals $j$, where the rotation number of a curve is defined to be the winding number of the tangent vector. In Figure \ref{standardcurves} we illustrate   some examples of the standard curves. We then have 
\begin{equation*}
J^+(K_j)=\begin{cases}
		2-2j & \text{$j\ne 0$,} \\
		0 & \text{$j=0$.}
	\end{cases}  
\end{equation*}
\end{enumerate}
Suppose that $K$ generically homotopes to some standard curve $K_j$ through $n_1$ positive crossings and $n_2$ negative crossings. Then the $J^+$-invariant of $K$ is determined by the homotopy via the above four rules:  $J^+(K)=J^+(K_j) - 2n_1 + 2n_2.$ Note that the $J^+$-invariant exists and is uniquely defined by the above properties. Moreover, it is additive under connected sums, see \cite[Chapter 1]{Arnold}.	
\begin{figure}[h]
  \centering
  \includegraphics[width=1.0\linewidth]{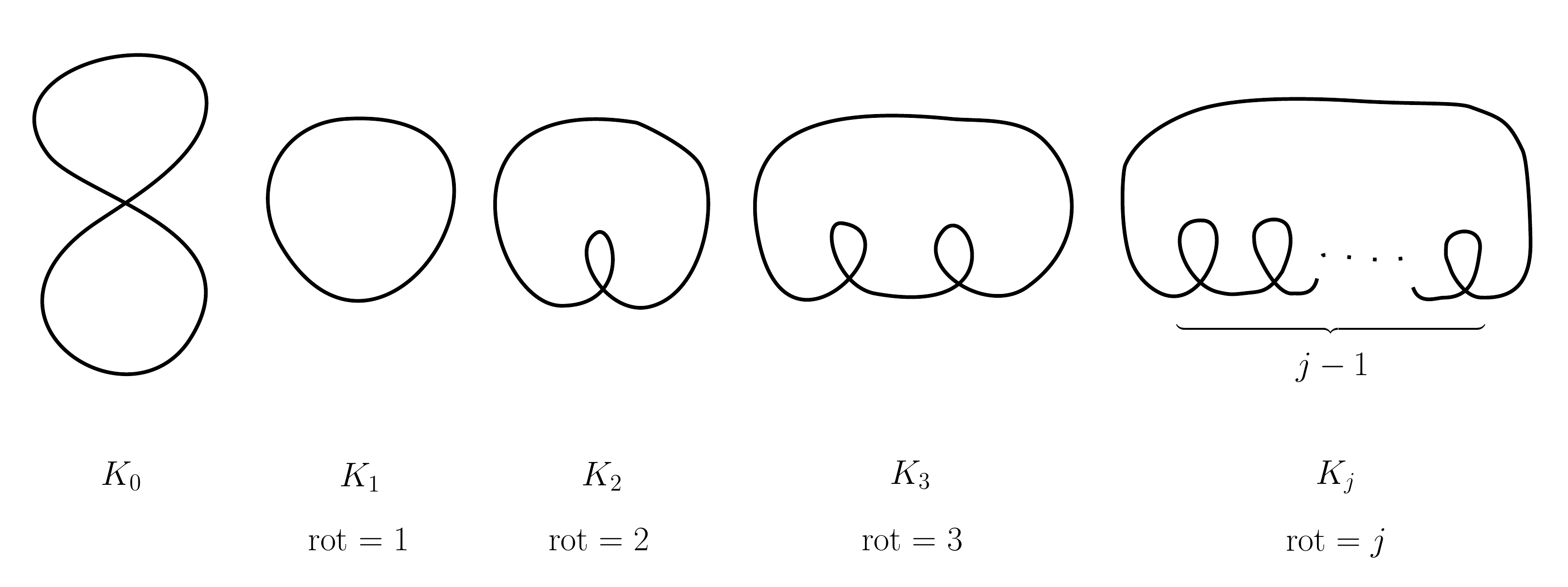}
  \caption{The standard curves}
  \label{standardcurves}
\end{figure}

\subsection{Viro's formula}
 For a generic immersion $K\subset \C$, we let $\Lambda_K$ be the  {set of all connected components of the complement} $\C\setminus K$   and let $D_K$ be the  {set of all double points of $K$}.  The  {winding number of $K$ around a component $C\in \Lambda_K$}, denoted by $w_C(K)$, is defined to be the winding number of $K$ around an interior point in $C$. Note that this number does not depend on the choice of the interior point, so $w_C(K)$ is well-defined. In order to define the index of a double point, we observe  that every double point is adjacent to four  components (one component may be counted twice).   The  {index of $K$ at the double point $p\in D_K$}, denoted by $\text{ind}_p(K)$, is then defined to be the arithmetic mean of the winding numbers of the adjacent  four   components.  We now state  Viro's formula.

\begin{Theorem}\label{virotheorem}
Let $K$ be a generic immersion with $n$ double points. Then   Arnold's $J^+$-invariant of $K$ is given by
$$J^+(K)=1+n-\sum_{C \in \Lambda_K}(w_C(K))^2+\sum_{p \in D_K} (\text{ind}_p(K))^2.$$	
\end{Theorem}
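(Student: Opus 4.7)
The plan is to verify Viro's formula by showing that the right-hand side, denoted
\[
\Phi(K) := 1 + n - \sum_{C \in \Lambda_K} w_C(K)^2 + \sum_{p \in D_K} \text{ind}_p(K)^2,
\]
satisfies the four axioms (i)--(iv) of Section~\ref{subsection2.2} that uniquely characterize Arnold's $J^+$-invariant, so that by uniqueness $\Phi = J^+$.

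Axiom (i), orientation invariance, is immediate: reversing the orientation of $K$ negates every $w_C$ and every $\text{ind}_p$, so the squared sums are unchanged. For axiom (iv) I will evaluate $\Phi$ directly on each standard curve: for $K_j$ with $j \ge 1$, realized as a large loop with $j-1$ inward petals, one has $n = j-1$ double points, complementary regions of windings $0$, $1$, and $j-1$ copies of $2$ (hence $\sum_{C} w_C^2 = 4j-3$), and each double point adjacent to sectors of windings $0,1,1,2$ with index $1$ (hence $\sum_{p} \text{ind}_p^2 = j-1$), giving $\Phi(K_j) = 1+(j-1)-(4j-3)+(j-1) = 2-2j = J^+(K_j)$; the figure eight $K_0$ is handled analogously with $\Phi(K_0) = 0$.

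The core of the argument is axioms (ii) and (iii), where I will track the changes $\Delta n$, $\Delta \sum_{C} w_C^2$, and $\Delta \sum_{p} \text{ind}_p^2$ across each generic event. Computing post-event sector windings is done by iterating the rule that crossing an oriented arc increases the winding number by $+1$ on the left of the arc's direction, and then computing each new double point's index as the arithmetic mean of its four adjacent sector windings. At a triple point $\Delta n = 0$, and a case analysis shows that the combinatorial rearrangement of adjacent sectors makes the changes in the squared sums cancel. At an inverse self-tangency the two arcs have opposite tangent directions, forcing each new double point to have adjacent sector windings $w, w+1, w, w-1$ (hence index $w$); combining this with Euler's relation $|\Lambda_K| = n+2$, which forces the two new global regions to have windings $w-1$ (the bigon) and $w+1$ (one wing), one obtains $\Delta \Phi = 2 - (2w^2+2) + 2w^2 = 0$. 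At a direct self-tangency the arcs are co-oriented, producing sector windings $w, w-1, w-2, w-1$ at each new double point (index $w-1$) and two new regions of winding $w-1$, yielding $\Delta \Phi = 2 - 2(w-1)^2 + 2(w-1)^2 = +2$ for a positive crossing (and $-2$ for a negative one).

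The main obstacle will be the global bookkeeping at the inverse self-tangency: locally up to five new sectors appear where there were three before, but Euler's relation $|\Lambda_K| = n+2$ forces only two new global regions, so one of the new ``wing'' sectors must be identified with a pre-existing component of winding $w+1$ through the ambient topology of $K$. The delicate check is that the change in $\sum_{C} w_C^2$, although a priori depending on this identification, always combines with the index contributions at the two new double points and $\Delta n = 2$ to give $\Delta \Phi = 0$ regardless of which identification occurs.
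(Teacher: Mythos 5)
Your approach is sound, and it is genuinely different from the paper's: the paper does not prove Theorem \ref{virotheorem} at all, but cites it from Viro's article (Corollary 3.1.B and Lemma 3.2.A there), where the formula arises inside Viro's framework of smoothing the double points of a generic immersion into a nested collection of embedded circles and computing Arnold's invariants from that configuration. You instead verify that the right-hand side $\Phi$ satisfies the four axioms of Section \ref{subsection2.2} and invoke uniqueness, which is self-contained modulo two standard inputs you should name explicitly: uniqueness of $J^+$ rests on the fact (Whitney--Graustein plus genericity) that every generic immersion connects to some standard curve by a generic homotopy, which the paper asserts; and the Euler relation $\abs{\Lambda_K}=n+2$, valid because the immersed circle is connected. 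That relation in fact resolves the worry in your last paragraph: at either tangency the bigon is manifestly a new face, so the old middle strip must split into \emph{exactly} two components, and no ambiguous identification of wing sectors can occur --- the change in $\sum_C w_C(K)^2$ is pinned down outright. One small labeling point: in the inverse-tangency move, depending on which of the two co-orientation variants occurs, the bigon carries winding $w+1$ and the wings $w-1$ (your labels describe the mirrored variant); since $(w+1)^2+(w-1)^2$ is symmetric, your computation $\Delta\Phi = 2-(2w^2+2)+2w^2=0$ is unaffected, and your direct-tangency and standard-curve computations ($\sum_C w_C^2 = 4j-3$, all indices equal to $1$, $\Phi(K_j)=2-2j$, $\Phi(K_0)=0$) are correct as written. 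The only piece left genuinely unproved is the triple-point case, which you merely assert; it does close, and you should write it out, since all orientation patterns must be checked. In the local model of three oriented arcs as lines with crossing signs $\alpha,\beta,\gamma\in\{\pm 1\}$ and base-sector winding $u$, the move destroys the triangle of winding $u+\alpha+\beta$ and creates one of winding $u+\gamma$, while the indices of the three surviving double points shift by $\gamma$, $-\beta$, $-\alpha$ respectively; a short computation gives
\begin{equation*}
\Delta\sum_{C\in\Lambda_K} w_C(K)^2 \;=\; 2u(\gamma-\alpha-\beta)-1-2\alpha\beta \;=\; \Delta\sum_{p\in D_K}(\text{ind}_p(K))^2,
\end{equation*}
so with $\Delta n=0$ the two changes cancel and $\Delta\Phi=0$ for all eight sign patterns. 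With that case written out, your argument is a complete, elementary alternative to the cited proof; what Viro's route buys instead is a uniform treatment of all three Arnold invariants ($J^\pm$ and the strangeness) at once.
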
 
This was proven in \cite[Corollary 3.1.B and Lemma 3.2.A]{Viro}. This formula tells us that the information of all double points and the winding number around each component of the complement completely determines the $J^+$-invariant.

\begin{Example} \rm
We compute the $J^+$-invariant of a generic immersion $K\subset \C$ given in Figure \ref{example_viro}. We assume that $K$ is oriented  counterclockwise. The complement $\C\setminus K$ has 5 components (including the unbounded component) and the number in the interior of each component means its winding number. The immersion has three double points, and the index at each double point is given by $$\frac{1}{4}\cdot (0+1+1+2)=1.$$By  Viro's formula, we obtain
$$J^+(K)=1+3-(3\cdot 1^2+1\cdot 2^2+1\cdot 0^2)+(3\cdot 1^2)=0.$$

\begin{figure}[h]
  \centering
  \includegraphics[width=0.3\linewidth]{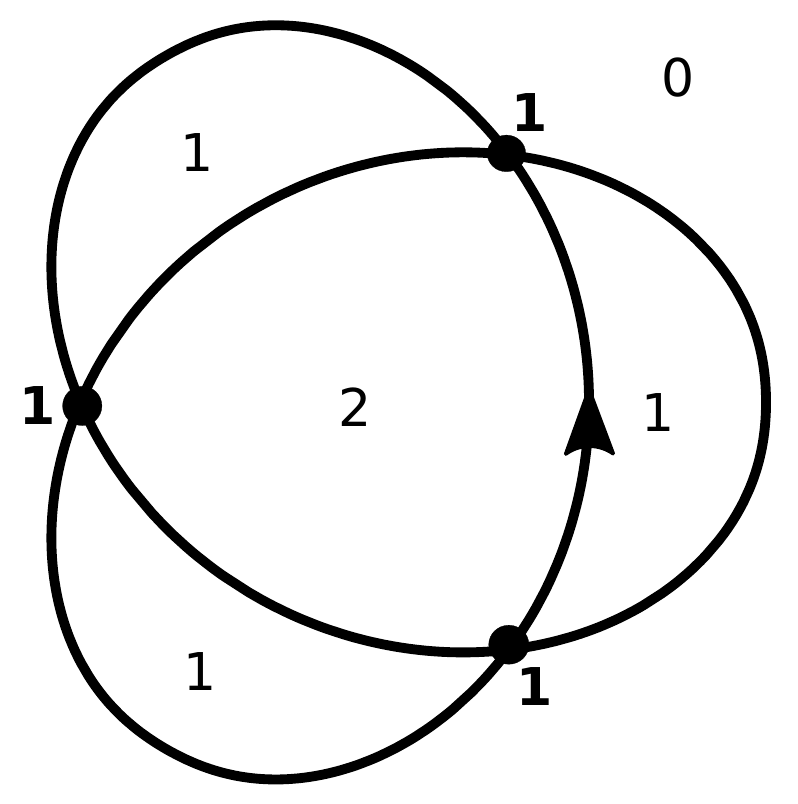}
  \caption{ A generic immersion $K\subset \C$} 
  \label{example_viro}
\end{figure}

\end{Example}

\begin{Example} \rm
	In Figure \ref{example_virodd}, there is a double point which is adjacent to four components, where the component of winding number 1 is counted twice. The index of the double point is then given by
	$$
	\frac{1}{4}\cdot(0+1+1+2)=1.
	$$
	Hence, we obtain
	$$
	J^+(K)=1+1-(1^2+2^2)+1^2=-2.
	$$

\begin{figure}[h]
  \centering
  \includegraphics[width=0.3\linewidth]{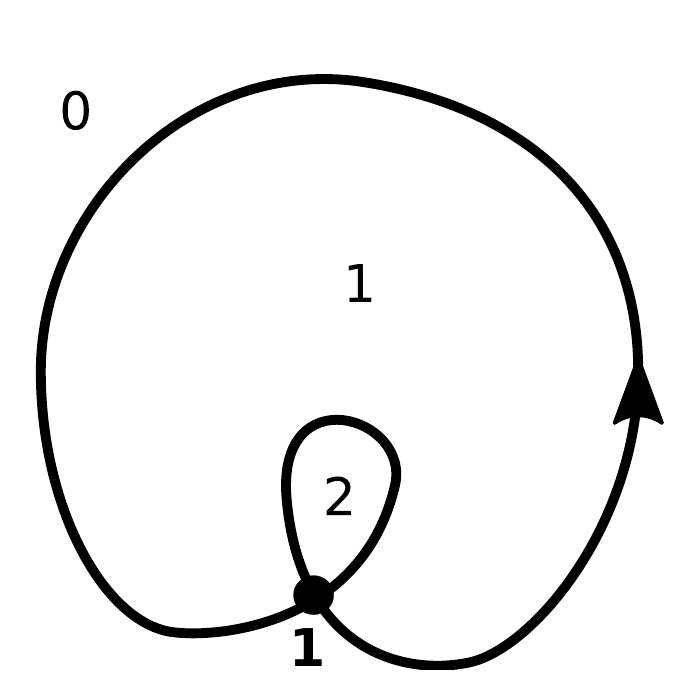}
  \caption{ A generic immersion $K\subset \C$} 
  \label{example_virodd}
\end{figure}

\end{Example}

It is worth mentioning that the winding number around the unbounded component is always zero for any generic immersion.

\subsection{The $\mathcal{J}_1$ and $\mathcal{J}_2$ invariants}\label{seubsection2.3}
In the paper \cite{invariant}, Cieliebak, Frauenfelder and van Koert defined the two $J^+$-like invariants $\mathcal{J}_1$ and $\mathcal{J}_2$ of generic   1-parameter families of simple periodic orbits in (planar) Stark-Zeeman systems. A \textit{Stark-Zeeman system} is a Hamiltonian system whose Hamiltonian is given by the Hamiltonian of the Kepler problem to which might be added some potential and  magnetic terms, where a potential term depends only on the position and a magnetic term also depends on the momentum. For example, the restricted three-body problem and the rotating Kepler problem, which we treat in this paper, are Stark-Zeeman systems, see \cite[Section 3.2]{invariant}.   In what follows, we assume that the magnetic term is nonzero and   has no singularities.

Assume that  $c_1 \in \R \cup \left \{ \infty \right \}$ has the following properties:
\begin{itemize}
\item $c_1$ is a regular value of the Hamiltonian;
\item for any $c<c_1$, the \textit{Hill's region} $\mathcal{K}_c$, which is by definition the projection of the energy hypersurface to the configuration space,  contains a unique bounded component $\mathcal{K}_c^b$ which is diffeomorphic to a closed disk with the origin removed;
\end{itemize}
By $\infty$ being a regular value of the Hamiltonian, we mean that any real number is a regular value. In other words, the Hamiltonian has no critical values.

 Physically, the Hill's region is a region in the configuration space in which the massless body for fixed energy can lie. In particular, velocity of the particle vanishes if and only if the particle lies on the boundary of the Hill's region. Let us fix  $c<c_1$.   We now regularize the system so that periodic orbits are allowed to pass through the origin.  Note that the regularized bounded component of the Hill's region, which we again denote by $\mathcal{K}_c^b$, is diffeomorphic to a closed disk. We denote by $\mathcal{K}_c^u$ the unbounded component of the Hill's region.

Consider an orbit $q : \R \rightarrow \R^2$ in $\mathcal{K}_c^b$ and fix a  point $q_0=q(t_0)$. Then it satisfies one of the following:
\begin{enumerate}[label=(\roman*)]
\item $q_0 \in \text{int}( \mathcal{K}_c^b )\setminus \left \{ (0,0) \right \}:$ in this case, the velocity $\dot{q}(t_0)$ does not vanish and hence the orbit $q$ is an  immersion near $q_0$;  \vspace{1mm}

\item $q_0 \in \partial \mathcal{K}_c^b  $ : recall that on the boundary of the Hill's region, the velocity vanishes. Cieliebak-Frauenfelder-van Koert showed that in this situation  the orbit $q$ has a cusp at $t=t_0$, see \cite[Lemma 1]{invariant}.  Moreover, if $q$ is a member of a 1-parameter family $q^s$ of nearby  orbits of the same energy with $q^0=q$, then through the cusp birth or death of a  loop  can occur, see Figures \ref{ifiniity}, \ref{elddddksd};  \vspace{1mm}

\item $q_0=(0,0) :$ similar to the previous case, the orbit has a cusp at $t=t_0$. If the orbit is a member of    the 1-parameter family described as in the previous case, then through the cusp birth or death of a loop around the origin can occur, see \cite[Lemma 2]{invariant} and Figure \ref{izero}.
\end{enumerate}

Because of  the  additivity under connected sum, the event (ii) for $\partial \mathcal{K}_c^b$ above, which gives rise to birth or death of exterior loops,  does not influence   the $J^+$-invariant. However, through  the event (ii) for $\partial \mathcal{K}_c^u$ or (iii), the $J^+$-invariant can change. This implies that a generic 1-parameter family of periodic orbits in a Stark-Zeeman system is not just a generic homotopy and the $J^+$-invariant is not a suitable invariant for this family. In view of the discussion so far, Cieliebak-Frauenfelder-van Koert introduced the following notion which is a   generalization of a generic homotopy.

\begin{Definition}\label{defhomotopy}  \rm (\cite[Definition 1]{invariant}) 
A 1-parameter family $(K^s)_{s\in [0,1]}$ of closed curves in $\C$ is called a {\bf Stark-Zeeman homotopy} if each $K^s$ is a generic immersion in $\C^*=\C\setminus\{0\}$ except for finitely many $s\in (0,1)$ and for such  $s\in (0,1)$   the following events happen:\vspace{2mm}\\
{\bf ($I_0$)} birth or death of  loops around the origin through cusps at the origin;\vspace{2mm}\\
{\bf ($I_\infty$)} birth or death of exterior loops through cusps at the boundary of the Hill's region;\vspace{2mm}\\
{\bf ($II^+$)} crossings through inverse self-tangencies;\vspace{2mm}\\
{\bf ($III$)} crossings through triple points, see Figure \ref{stargdkzlbsdfs}.
\end{Definition}

\begin{figure}[h]
\begin{subfigure}{0.65\textwidth}
  \centering
  \vspace{3mm}
  \includegraphics[width=1\linewidth]{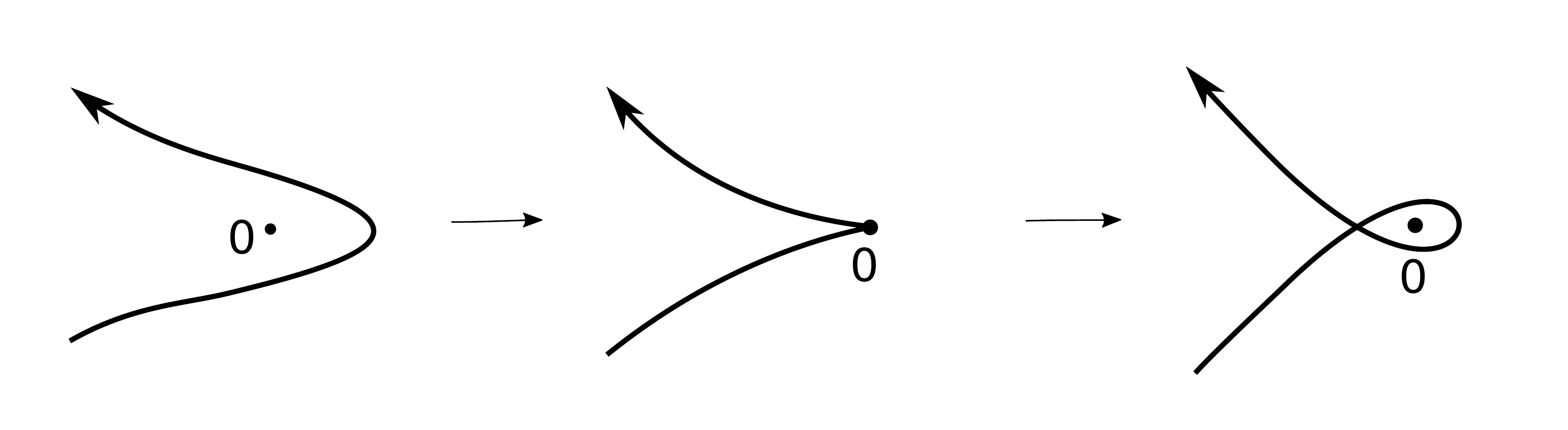}
  \caption{ Event {\bf ($I_0$)}: birth of a loop around the origin}
  \label{izero}
\end{subfigure}
\begin{subfigure}{0.65\textwidth}
  \centering
  \vspace{3mm}
  \includegraphics[width=1\linewidth]{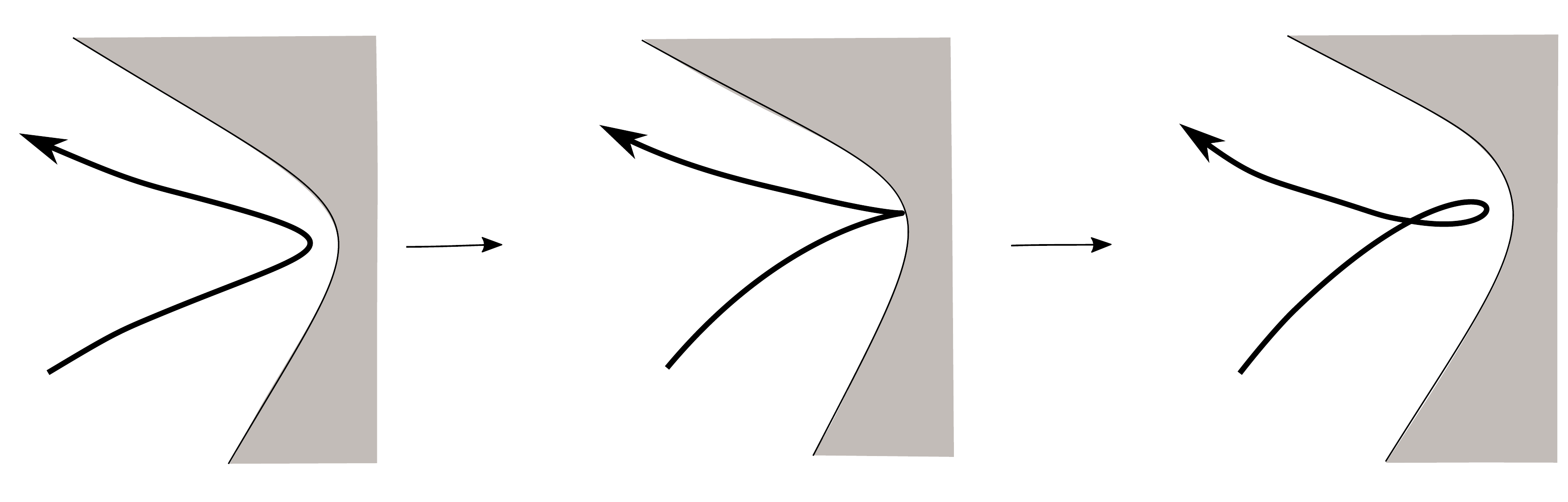}
  \caption{ Event {\bf ($I_\infty$)}: birth of an exterior loop}
 \label{ifiniity}
\end{subfigure}
\begin{subfigure}{0.65\textwidth}
  \centering
  \vspace{3mm}
  \includegraphics[width=1\linewidth]{is.pdf}
  \caption{ Event {\bf ($II^+$)}: crossing through an inverse self-tangency}
 \label{sdfsdfsfsdfsfd33331121df}
\end{subfigure}
\begin{subfigure}{0.65\textwidth}
  \centering
  \vspace{3mm}
  \includegraphics[width=1\linewidth]{tri.pdf}
  \caption{ Event {\bf ($III$)}: crossing through a triple point}
 \label{wesfdbvsisoi3}
\end{subfigure}
\begin{subfigure}{0.65\textwidth}
  \centering
  \vspace{3mm}
  \includegraphics[width=1\linewidth]{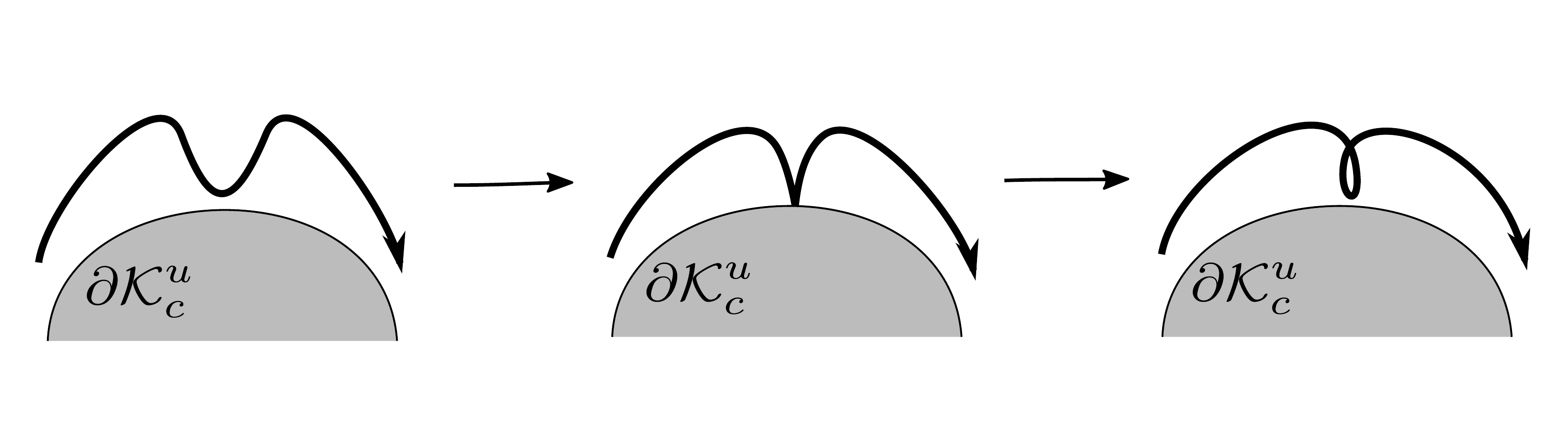}
  \caption{ Event {\bf ($I_{-\infty}$)}: birth of an interior loop}
 \label{elddddksd}
\end{subfigure}
\caption{Possible events along a family of periodic orbits in a Stark-Zeeman system}
\label{stargdkzlbsdfs}
\end{figure}

\begin{Remark} \rm   By \cite[Lemma 1]{invariant}, along a family of periodic orbits above $\mathcal{K}_c^u$, birth or death of interior loops which do not wind around the origin can happen. We denote by $(I_{-\infty})$ this event. Note that the event $(I_{-\infty})$ does not happen during a Stark-Zeeman homotopy and the invariants $\mathcal{J}_1$ and $\mathcal{J}_2$ might change under this event, see Sections \ref{sectheorem1} and \ref{sectionslkdhf}.
\end{Remark}

For a generic immersion $K\subset \C^*$, we write $w_0(K)\in \Z$ for the  {winding number around the origin}. The {\bf $\mathcal{J}_1$ invariant} is defined by 
	$$\J_1(K):=J^+(K)+\frac{(w_0(K))^2}{2}.$$
Recall that  the  {Levi-Civita regularization map} is given by the complex squaring map
	$$L:\C^*\to \C^*,\quad v\mapsto v^2.$$
Note that for a generic immersion $K\subset \C^*$ the preimage $L^{-1}(K)\subset \C^*$ is also a generic immersion. Since the restriction $L|_{L^{-1}(K)}:L^{-1}(K)\to K$ is a 2-fold covering, the preimage $L^{-1}(K)$ has at most two components. The {\bf $\J_2$ invariant}   is then defined as follows.\vspace{2mm}\\
If $w_0(K)$ is \textit{odd}, then $L^{-1}(K)$ is connected and   it is a single generic immersion. In this case we set
$$\J_2(K):=J^{+}(L^{-1}(K)).$$
If $w_0(K)$ is \textit{even}, then $L^{-1}(K)$ consists of two components, i.e., two copies of a generic immersion. Choose one generic immersion $\widetilde{K}$ of $L^{-1}(K)$ and we set
$$\J_2(K):=J^+(\widetilde{K}).$$
	 Since $L$ is the complex squaring map, the two generic immersions of $L^{-1}(K)$ are related by a $\pi$-rotation in $\C^*$. This implies that $\J_2(K)$ does not depend on the choice of $\widetilde{K}$.

\begin{Proposition}{\rm (\cite[Propositions 4 and 5]{invariant})} 
The two invariants $\J_1$ and $\J_2$ do not change during Stark-Zeeman homotopies.
\end{Proposition}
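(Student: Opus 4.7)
The plan is to verify invariance of $\J_1$ and $\J_2$ separately for each of the four event types listed in Definition \ref{defhomotopy}. For events $(II^+)$ and $(III)$, the modifications take place in $\C^*$ and do not touch the origin, so the winding number $w_0(K)$ is preserved, and $J^+(K)$ is invariant by property (ii) of $J^+$ recalled in Section \ref{subsection2.2}. Hence $\J_1$ is preserved. For $\J_2$, the Levi-Civita map restricts to an unramified double cover $L : \C^* \to \C^*$, so each such local event lifts to two disjoint events of the same type in $L^{-1}(K)$, again preserving $J^+$ on each component and hence $\J_2(K)$. For event $(I_\infty)$, the birth or death of an exterior loop away from the origin is effectively a connected sum with a small embedded circle; additivity of $J^+$ under connected sums together with $J^+(\text{circle})=0$ preserve $J^+(K)$, while $w_0(K)$ is clearly unchanged. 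Under $L^{-1}$ the extra loop lifts to one or two small embedded circles (according to the parity of the winding of the ambient region), and the same additivity argument preserves $\J_2$.

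The main work lies in event $(I_0)$, which is the only event involving a cusp at the origin and is precisely where both $J^+(K)$ and $w_0(K)$ change in a coordinated way. For $\J_1$, I would apply Viro's formula (Theorem \ref{virotheorem}) to compare $J^+$ before and after: the birth or death of the loop introduces one new complementary region of $\C \setminus K$ with a specific winding number, one new double point whose index is the arithmetic mean of the four adjacent winding numbers, together with shifts in some pre-existing adjacent windings. A careful bookkeeping, taking into account the orientation of the newborn loop dictated by the cusp geometry, then shows that the change in $J^+(K)$ exactly cancels the change in $\tfrac{1}{2}w_0(K)^2$, so that $\J_1(K) = J^+(K) + \tfrac{1}{2}w_0(K)^2$ is preserved.

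The principal obstacle is $(I_0)$ for $\J_2$, because the cusp sits precisely at the branch point $v=0$ of the Levi-Civita map $L(v) = v^2$, and the parity of $w_0(K)$ generically flips across the event, so $L^{-1}(K)$ can change its number of connected components. The approach is to analyze the local model: a cusped branch of $K$ with local parametrization $t \mapsto a t^2 + b t^3 + \cdots$ near the origin pulls back under $v \mapsto v^2$ to a smooth branch $v \sim \pm \sqrt{a}\, t + \cdots$, so $L^{-1}(K)$ is smooth at $v = 0$. As the parameter $s$ crosses the event, the local preimage undergoes either a reconnection of its two branches into a single component or a local isotopy equivalent (away from $v=0$) to an inverse self-tangency or a triple point. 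In either case, combining the defining properties of $J^+$ with the $\pi$-rotation symmetry $v \mapsto -v$ of the preimage preserves the value $\J_2(K)$. Executing this case analysis while carefully tracking orientations of the two Levi-Civita sheets is the main technical step; the full verification is carried out in \cite[Proposition 5]{invariant}.
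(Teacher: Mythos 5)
First, a remark on the comparison itself: the paper does not prove this Proposition but imports it verbatim from \cite[Propositions~4 and~5]{invariant}, so your proposal can only be judged on its own merits against the argument of that reference. Your treatment of $(II^+)$, $(III)$ and $(I_\infty)$ is essentially the standard one and is fine, up to two small slips: the birth of an exterior loop is a connected sum with the figure eight $K_0$ (it adds a double point), not with an embedded circle, though the conclusion is unchanged since $J^+(K_0)=0$; and the preimage under $L$ of a small loop based at a point of $\C^*$ is always \emph{two} small loops, one attached at each of the two preimages of the base point --- the ``one or two according to parity'' alternative concerns the components of $L^{-1}(K)$, not the lifted loops.

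The genuine gap is in $(I_0)$, which you rightly identify as the crux but then analyse incorrectly. A cusp at the origin changes $w_0(K)$ by $\pm 2$, not $\pm 1$: in the Levi--Civita chart the regularized curve crosses $v=0$ transversally, and the two nearby local arcs $v(t)=t\pm\epsilon i$ map under $v\mapsto v^2$ to arcs sweeping a total angle of $\mp 2\pi$ around the origin, so the winding number jumps by $2$. (This is visible in the paper's own data: the $e$-homotopy passes from $w_0=l-k$ to $w_0=l+k$ through $k$ simultaneous cusps at $e=1$, i.e.\ $+2$ per cusp; it is also forced by the persistence of the relation $\mathcal{J}_2=2\mathcal{J}_1-1$ of Proposition~\ref{prorelation}.) Hence the parity of $w_0$ never flips, $L^{-1}(K)$ never changes its number of connected components, and the ``reconnection'' scenario your argument is built around does not occur; nor does the event lift to an inverse self-tangency or triple point. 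The correct mechanism --- which you state (``$L^{-1}(K)$ is smooth at $v=0$'') but do not exploit --- is that upstairs the family $L^{-1}(K^s)$ is, near $v=0$, an ordinary generic homotopy in which a smooth arc sweeps across the puncture, so $J^{+}(L^{-1}(K))$ simply does not change; this is the entire raison d'\^etre of $\mathcal{J}_2$. For $\mathcal{J}_1$ your local bookkeeping (``one new region, one new double point'') is also too naive: the newborn arc makes a full turn around the origin and generically crosses the other strands there, and the number of new double points depends on the configuration. What is actually needed is the lemma of \cite{invariant} that the birth of a loop around the origin changes $J^+$ by $-2w_0(K)-2$, which indeed cancels $\tfrac{1}{2}\bigl((w_0+2)^2-w_0^2\bigr)$; your sketch names the right target but does not supply this computation, and with $\Delta w_0=\pm1$ the cancellation you assert would in fact fail.
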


The relation between the invariants $\mathcal{J}_1$ and $\J_2$ are partially given as follows.

\begin{Proposition}\label{prorelation}{\rm (\cite[Proposition 4.6]{invariant})} 
If $w_0(K)$ is odd, then $\J_2(K)=2\J_1(K)-1$.	
\end{Proposition}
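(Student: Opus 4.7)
The plan is to prove the equivalent identity $J^+(L^{-1}(K)) = 2 J^+(K) + w_0(K)^2 - 1$ by applying Viro's formula (Theorem \ref{virotheorem}) to both $K$ and its Levi-Civita preimage $\widetilde K := L^{-1}(K)$, then matching the three contributions (number of double points, squared windings, squared indices) term by term. Parametrize $K$ by $\gamma: \R/\Z \to \C^*$. Since $w_0(K)$ is odd, the monodromy of the complex square root of $\gamma$ is $-1$, so any continuous lift $\widetilde\gamma$ with $\widetilde\gamma(t)^2 = \gamma(t)$ only closes up after $t$ traverses $\R/2\Z$; this gives a single connected generic immersion $\widetilde K$ satisfying $\widetilde\gamma(t+1) = -\widetilde\gamma(t)$.

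For the double-point count, a short case analysis shows that $\widetilde\gamma(t_1) = \widetilde\gamma(t_2)$ forces $\gamma(t_1) = \gamma(t_2)$, and the sign-change relation rules out the case $t_2 \equiv t_1 + 1 \pmod 2$; hence each double point $p$ of $K$ lifts to exactly two transverse double points of $\widetilde K$ at $\pm\sqrt p$, giving $\widetilde n = 2n$. For the components of $\C^* \setminus K$ I would use the covering-space criterion that $L^{-1}(C)$ is connected iff some loop in $C$ has odd winding around $0$. Under the parity hypothesis, $0$ lies in a bounded component $C_{\alpha_0}$ with $w_{\alpha_0} = w_0(K)$, and this component (punctured at $0$) together with the unbounded component $C_\infty$ (via a large enclosing circle of winding $1$) both support odd-winding loops, while all other bounded components are simply connected disks with disconnected preimage. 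To relate windings I would push the class $[\widetilde\gamma]$ forward in $H_1(\C^* \setminus \{\widetilde v, -\widetilde v\})$ using $L_*(\widetilde\alpha_0)=2\alpha_0$ and $L_*(\widetilde\alpha_\pm)=\alpha_v$, obtaining $w_{\widetilde K}(\widetilde v) + w_{\widetilde K}(-\widetilde v) = 2 w_K(v)$; combined with the $\Z/2$-symmetry $\widetilde K = -\widetilde K$ (rotation by $\pi$ is orientation-preserving), each equals $w_\alpha$. Summing squares then yields
\[
\sum_{\widetilde C \in \Lambda_{\widetilde K}} w_{\widetilde C}(\widetilde K)^2 = 2\sum_{C \in \Lambda_K} w_C(K)^2 - w_0(K)^2,
\]
with the correction coming solely from the unpaired lift of $C_{\alpha_0}$. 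For indices, the local-diffeomorphism property of $L$ at $\pm\sqrt p$ identifies the four adjacent sheets at each lifted double point with those at $p$ with identical windings, so $\mathrm{ind}_{\widetilde p}(\widetilde K) = \mathrm{ind}_p(K)$ and $\sum_{\widetilde p} \mathrm{ind}_{\widetilde p}^2 = 2 \sum_p \mathrm{ind}_p^2$. Plugging all three identities into Viro's formula for $\widetilde K$ and $K$ and simplifying delivers the claim.

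The main obstacle will be the topological bookkeeping in the middle step: one must track carefully which components of $\C^* \setminus K$ have connected versus disconnected preimage and verify the winding equality uniformly across both cases. The parity of $w_0(K)$ is essential here, as it is precisely what makes the central component $C_{\alpha_0}$ unpaired in the preimage and ultimately produces the $-w_0(K)^2$ correction responsible for the $-1$ in $\mathcal{J}_2 = 2\mathcal{J}_1 - 1$.
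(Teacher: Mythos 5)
The paper itself offers no proof of this proposition: it is quoted verbatim from \cite[Proposition 4.6]{invariant}, so your argument is not competing with an in-paper proof but supplying one. Your derivation is correct. Reducing the claim to $J^+(L^{-1}(K))=2J^+(K)+w_0(K)^2-1$ and verifying it term by term in Viro's formula works: the double points of $\widetilde K=L^{-1}(K)$ are exactly the two square roots of each double point of $K$ (the case $t_2\equiv t_1+1$ is correctly excluded since it would force $\widetilde\gamma(t_1)=0$), the identity $w_{\widetilde v}(\widetilde K)+w_{-\widetilde v}(\widetilde K)=2w_v(K)$ together with the orientation-preserving $\pi$-rotation symmetry gives $w_{\widetilde v}(\widetilde K)=w_v(K)$ for every lift, and the index computation then follows because $L$ is a local diffeomorphism near each lifted double point. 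Two small points deserve to be made explicit. First, the claim that every bounded component $C\neq C_{\alpha_0}$ has disconnected preimage rests on such a $C$ being simply connected; this is true because complementary components of a connected compact subset of $S^2$ are simply connected, but it is worth stating since it is the reason only $C_{\alpha_0}$ (and the unbounded component, which contributes $0$ either way) fails to double. Second, oddness of $w_0(K)$ is used twice: once to guarantee $L^{-1}(K)$ is connected, and once to guarantee $w_0(K)\neq 0$ so that the origin indeed lies in a \emph{bounded} component; your bookkeeping then correctly isolates the single $-w_0(K)^2$ correction, which after substituting the definitions $\J_1(K)=J^+(K)+w_0(K)^2/2$ and $\J_2(K)=J^+(L^{-1}(K))$ yields $\J_2(K)=2\J_1(K)-1$. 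What your approach buys is a self-contained combinatorial proof at the level of Viro's formula, consistent with the computational spirit of Sections \ref{sectheorem1} and \ref{sectionslkdhf}, rather than deferring to the external reference.
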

It is known that if $w_0(K)$ is even, the invariants $\J_1$ and $\J_2$ are completely independent. See \cite[Proposition 4.7]{invariant}.

 \section{The rotating Kepler problem}\label{sectionRKP}

In this section, we study some properties of the rotating Kepler problem which we need to prove the main theorem. In particular, we see bifurcations of periodic orbits with negative Kepler energies.  In what follows, $\gamma$ denotes a periodic orbit in the inertial Kepler problem and $\alpha$ a periodic orbit in the rotating Kepler problem. 

We first fix some conventions. The symplectic form on $T^* \R^2$ with coordinates $(q_1, q_2, p_1, p_2)$ is given by $\omega = dp_1 \wedge dq_1 + dp_2 \wedge dq_2$ and the Hamiltonian vector field  $X_H$ of a Hamiltonian $H$ is defined by the relation $\omega( X_H , \cdot ) = -dH$. The Hamiltonian flow $\phi_H^t$ associated with the Hamiltonian $H$ is defined to be the unique solution of the initial value problem: $(d/dt)\phi_H^t(x) = X_H ( \phi_H^t(x) )$ and $\phi_H^0(x) = x$ for all $x$ and  $t$.

\subsection{Hamiltonian and Integrals}
Recall that the Hamiltonian of the restricted three-body problem is given by
\begin{equation*}
H_{\text{3BP}}(q,p) = \frac{1}{2}|p|^2 - \frac{1-\mu}{|q-E|} - \frac{\mu}{|q-M|} + q_1 p_2 - q_2 p_1,
\end{equation*}
where $E=(\mu, 0)$ is the Earth, $M=(-(1-\mu), 0)$ is the Moon and  $\mu$ is the mass ratio of the two primaries. The rotating Kepler problem arises as the limit of this problem if we switch off the Moon, in other words, the mass ratio $\mu$ goes to zero. It follows that the Hamiltonian of the rotating Kepler problem is given by $H =E+L : T^* (\R^2 \setminus \left \{ (0,0) \right \} ) \rightarrow \R$, where
\begin{equation*}
E(q,p) = \frac{1}{2}|p|^2 - \frac{1}{|q|}
\end{equation*}
is the Hamiltonian of the inertial Kepler problem and 
\begin{equation*}
L(q,p)=q_1 p_2 - q_2 p_1
\end{equation*}
is the angular momentum. Note that the two quantities $E$ and $L$ are integrals of the system, namely $\left\{ H, E \right \} = \left\{ H, L \right\}=0$, where $\left \{ \cdot, \cdot \right\}$ means the Possion bracket of two smooth functions. In particular, by the Noether theorem they are constant along orbits: $E( \phi^t_H(x)) =E(x)$ and $L( \phi^t_H(x)) =L(x)$ for any $x$ and $t$.

By completing squares, we see that

\begin{equation*}\label{Hamiltonianewq}
H(q,p) = \frac{1}{2} ( (p_1 - q_2)^2 + (p_2 + q_1)^2 ) + U_{\text{eff} }(q),
\end{equation*}
where
\begin{equation*}
U_{\text{eff} }(q) = -\frac{1}{|q|} - \frac{1}{2}|q|^2
\end{equation*}
is the effective potential. The twisted terms in the momenta are due to the Coriolis force and the term $-(1/2)|q|^2$ in the effective potential is due to the centrifugal force. The Hamiltonian $H$ admits an $S^1$-family of critical points of the form $(q_1, q_2, q_2, -q_1)$, where $|q| = 1$.  The associated energy level $c_J=-3/2$ will be called the critical Jacobi energy.

For the later use,  we introduce  polar coordinates $(r, \theta)$ so that $q_1 = r\cos \theta$ and $q_2 = r \sin \theta$. The momenta $(p_r, p_{\theta})$ are defined by the canonical relation $p_1 dq_1 +  p_2 dq_2 = p_r dr + p_{\theta} d\theta$. Note that the angular momentum is given by $L = p_{\theta}$. Under the coordinates change, the Hamiltonian becomes
\begin{equation}\label{hamltiodnsRKP}
H(r, \theta, p_r, p_{\theta}) = \frac{1}{2} \bigg( p_r^2 + \frac{ p_{\theta}^2}{r^2} \bigg) - \frac{1}{r} + p_{\theta}
\end{equation}
and the Hamiltonian vector field is given by
\begin{equation}\label{transvec}
X_H = p_r \partial_r + \bigg( \frac{p_{\theta}}{r^2 } +1 \bigg)\partial_{\theta} + \frac{ p_{\theta}^2 - r}{r^3} \partial_{p_r} .
\end{equation}

\subsection{Hill's regions}
Given $c \in \R$, the Hill's region associated with the energy level $c$ is given by
\begin{equation*} 
\mathcal{K}_c := \pi ( H^{-1}(c) ) = \left\{ q= (q_1, q_2) \in \R^2 \setminus \left\{ (0,0) \right\} : U_{\text{eff}}(q) \leq c \right\},
\end{equation*}
where $\pi : T^* ( \R^2 \setminus \left\{ (0,0 )\right\} ) \rightarrow \R^2 \setminus \left\{ (0,0) \right\}$ is the projection along the fiber.  Consider the function 
\begin{equation}\label{functionf}
f(r) = - \frac{1}{r} - \frac{1}{2} r^2 , \;\;\; r>0.
\end{equation}
For $ c < c_J$, the equation $f(r) = c$ has two roots $r_1 < 1 < r_2$ having the property that 
\begin{equation*}
\mathcal{K}_c = (D_{r_1} \setminus \left \{( 0,0)\right \}) \cup ( \R^2 \setminus \text{int} (D_{r_2}) ),
\end{equation*}
where $D_r$ is the closed disk centered at the origin with radius $r$. Note that $\mathcal{K}_c$ consists of two connected components: the bounded component $D_{r_1} \setminus \left \{( 0,0)\right \}$ and the unbounded  component $  \R^2 \setminus \text{int} (D_{r_2}) $. As $c$ goes to $c_J$, the two radii $r_1$ and $r_2$ tend to $1$. At $ c=c_J$, we have $r_1 = r_2 =1$. For    $c > c_J$, the equation $f(r) = c$ has no solution and hence $\mathcal{K}_c =\R^2 \setminus \left \{ (0,0) \right\}$.

 \subsection{Periodic orbits}\label{sec:peor} In the following we assume that the Kepler energy is negative, $E<0$.   Recall that if $E<0$, then a periodic orbit in the inertial Kepler problem is either an elliptic orbit (including a collision orbit) or a circular orbit.  Note  that (noncollision) elliptic orbits have eccentricity $e \in (0,1)$, collision orbits are of $e=1$ and circular orbits are of $e = 0$.

 Since $\left \{ E, L \right \} =0$,  the   flows of  the  Hamiltonian vector fields $X_E$ and $X_L$ commute. It follows that the flow of their sums equals the composition of the flows of each vector field, namely
\begin{equation*}
\phi_H^t = \phi_{E+L}^t = \phi_{E}^t \circ \phi_L^t = \phi_L^t \circ \phi_E^t.
\end{equation*}
It follows that an orbit $\alpha$ of the rotating Kepler problem has the form $\alpha(t) = \exp( it ) \gamma(t)$, where $\gamma$ is an inertial Kepler orbit. Note that $\alpha$ is not necessarily periodic.

Suppose that $\gamma$ is a   Kepler ellipse of period $T>0$. Then $\alpha$ is   periodic  if and only if there exist some relatively prime $k, l \in \N$ satisfying  $ kT  = 2 \pi l$.  This implies that $\alpha$ is a $k$-fold covered Kepler ellipse in an $l$-fold covered coordinate system. In particular, it is $2 \pi l$-periodic. This observation gives rise to the following definition. 
\begin{Definition} \rm (\cite[Section 4]{RKP})   A $\tau$-periodic orbit $\alpha(t) = \exp( it ) \gamma(t)$   is called a \textit{$T_{k,l}$-type orbit} if $\tau = 2\pi l$ for some $l \in \N$ and $\gamma(t)$ is a  Kepler ellipse of period $T$ satisfying $kT = 2 \pi l$. A Liouville torus on which $T_{k,l}$-type orbits lie is referred to as a \textit{$T_{k,l}$-torus.}  The family of all $T_{k,l}$-tori is called the \textit{$T_{k,l}$-torus family}. Some examples of torus families are given in Figure \ref{circular}.
\end{Definition}

\begin{Remark} \rm Whenever $T_{k,l}$-torus families are under consideration, we always assume that $k$ and $l$ are relatively prime. This  implies that every $T_{k,l}$-type orbit is simple covered. 
\end{Remark}

     Recall that a  Kepler ellipse  $\gamma$ of energy $E$  is described by the equation
\begin{equation}\label{eqkepler7}
r = \frac{ a(1-e^2)}{1+e \cos ( \theta - \alpha)},
\end{equation}
where $r=|q|$, $a=-1/2E$ is the semi-major axis,  $\theta$ is the angle between the major axis and the position vector and $\alpha$ is the argument of the perihelion, see Figure \ref{conic}. 
\begin{figure}[t]
 \centering
 \includegraphics[width=0.65\textwidth, clip]{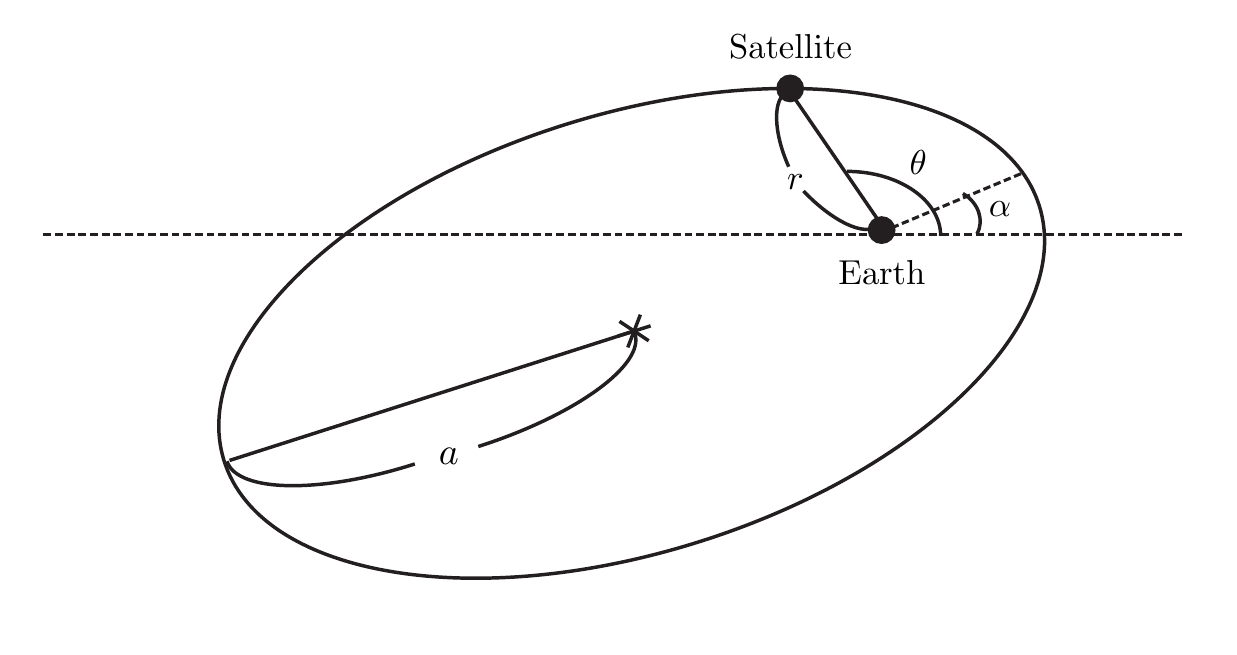}
 \caption{A Kepler ellipse}
 \label{conic}
\end{figure}
It follows that the \textit{perihelion} (the nearest point to the origin) of $\gamma$ has radius 
\begin{equation*}
r_{\min}= \frac{1-e^2}{-2E  (1+e)} = \frac{1-e}{-2E }
\end{equation*}
and   the \textit{aphelion} (the farthest point to the origin) has radius 
\begin{equation*}
r_{\max}= \frac{1-e^2}{-2E(1-e)} = \frac{1+e}{-2E}.
\end{equation*}
Since a $T_{k,l}$-type orbit $\alpha$ is obtained from the $k$-fold covering of   $\gamma$, there exist precisely $k$ perihelions and $k$ aphelions on $\alpha$ whose radii are given by
\begin{equation}\label{eqrmnsridncx}
r_{\min}=   \frac{1-e}{-2E_{k,l} } \;\;\; \text{ and } \;\;\; r_{\max}=   \frac{1+e}{-2E_{k,l}},
\end{equation}
respectively.

\begin{Lemma}{\rm (\cite[Section 6]{RKP})} The Kepler energy $E$ is constant in each torus family.  
\end{Lemma}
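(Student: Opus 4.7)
The plan is to deduce the statement from Kepler's third law together with the resonance condition built into the definition of a $T_{k,l}$-type orbit. Recall that $\alpha(t)=\exp(it)\gamma(t)$, where $\gamma$ is a Kepler ellipse of period $T$ satisfying $kT=2\pi l$. The first observation I would make is that the period $T=2\pi l/k$ is determined purely by the two integers $k$ and $l$; it does not depend on which particular torus in the $T_{k,l}$-family one is on, nor on which orbit on that torus.

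Next, I would invoke Kepler's third law for the inertial Kepler problem in the normalization $E(q,p)=\tfrac{1}{2}|p|^2-1/|q|$ used in the paper: a bound Kepler orbit with semi-major axis $a$ has period $T=2\pi a^{3/2}$. Combined with $T=2\pi l/k$, this forces
\[
a=(l/k)^{2/3}.
\]

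Finally, I would apply the standard identity $E=-1/(2a)$ relating semi-major axis to Kepler energy on a negative-energy Kepler orbit, giving
\[
E_{k,l}=-\tfrac{1}{2}(k/l)^{2/3}.
\]
Since the right-hand side depends only on $k$ and $l$, and since the remaining parameters labelling the $T_{k,l}$-torus family (the eccentricity $e\in[0,1)$ and the rotational position of the apsidal line of $\gamma$) leave the semi-major axis unchanged, the Kepler energy takes the same value on every $T_{k,l}$-torus. This will also match the formulas for $r_{\min}$ and $r_{\max}$ in \eqref{eqrmnsridncx}, where $E_{k,l}$ appears as a quantity depending on $k$ and $l$ alone.

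There is no real obstacle here; the only subtlety worth checking is that the two parameters labelling the $T_{k,l}$-family genuinely do not affect $a$. This is immediate: a rotation of the apsidal line is an isometry of $\R^2$ preserving the inertial Kepler Hamiltonian $E$, and varying $e$ while fixing the period $T=2\pi l/k$ keeps $a=(l/k)^{2/3}$ fixed by Kepler's third law. One could equivalently argue that $E$ Poisson-commutes with both $H$ and $L$, so it is constant on each common level set $\{H=h,\,L=\ell\}$; but the computation above has the advantage of producing the explicit value $E_{k,l}$, which is what is needed in the subsequent analysis.
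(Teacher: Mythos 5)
Your proof is correct and follows essentially the same route as the paper: both derive $E_{k,l}=-\tfrac{1}{2}(k/l)^{2/3}$ by combining Kepler's third law with the resonance condition $kT=2\pi l$, the only cosmetic difference being that you pass through the semi-major axis $a$ and the identity $E=-1/(2a)$ while the paper writes the third law directly in terms of $E$. The extra check that eccentricity and apsidal rotation do not affect $a$ is a harmless addition.
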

\begin{proof} Let $\alpha$ and $\gamma $ be as above.  In view of  Kepler's third law $ T^2 = (2\pi)^2 / (-2E )^3$ and the fact that $ k T = 2 \pi l$, we obtain that $4\pi^2 l^2 / k^2 = \pi^2/ (-2E^3)$ which implies that $E = -(1/2)(k/l)^{2/3}$. This completes the proof of the lemma.
\end{proof}
We denote by $E_{k,l}$ the Kepler energy of the $T_{k,l}$-torus family, namely
\begin{equation}\label{eqtorusenergy}
E_{k,l} = - \frac{1}{2} \bigg( \frac{k}{l} \bigg)^{2/3}.
\end{equation}
Observe that
\begin{equation}\label{eqkandlcompare}
\begin{cases} k>l &  \;\; \text{ if }E_{k,l}<-1/2 \\ k=l     &   \;\; \text{ if }E_{k,l}=-1/2\\ k<l &    \;\; \text{ if }-1/2<E_{k,l}<0. \end{cases}
\end{equation}

\;\;

Suppose that $\gamma$ is a circular orbit. Since $\exp( i t)$ takes circular orbits to circular orbits, $\alpha(t)$ is always a circular orbit. In particular, it is  periodic.  Recall that in the inertial Kepler problem, on each negative energy hypersurface there exist precisely two circular orbits, where one rotates counterclockwise and the other rotates  clockwise. Note that in our convention  the coordinate system rotates clockwise, see for example \cite[Proposition 10.1.2]{foundation}.   A circular orbit in the rotating system is then called a \textit{direct circular orbit} (or a \textit{retrograde circular orbit}) if the associated circular orbit in the inertial system rotates clockwise (or counterclockwise). In other words, from the viewpoint of the inertial system, the direct circular orbit rotates in the same direction as the coordinate system and the retrograde circular orbit rotates in the opposite direction. This implies that  a  retrograde circular orbit has bigger angular momentum than a direct one.

Recall that the eccentricity $e$ of a Kepler ellipse satisfies the relation
\begin{equation}\label{eccentrieq}
e^2 = 2EL^2 +1 = 2E(H-E)^2+1 .
\end{equation} 
 Plugging $e=0$ and $H=c$ into (\ref{eccentrieq}) gives rise to the equation 
\begin{equation}\label{equationecc}
2E(c-E)^2+1 =0
\end{equation}
whose graph consists of two connected components. For $E<0$, the inequality $2E(c-E)^2+1\geq 0$ is solved for $c$ lying in  an interval containing the value $c=E$, which corresponds to $e=1$, in its interior. Each such an interval represents a $T_{k,l}$-torus family, see Figure \ref{circular}.
\begin{figure}[t]
 \centering
 \includegraphics[width=0.7\textwidth, clip]{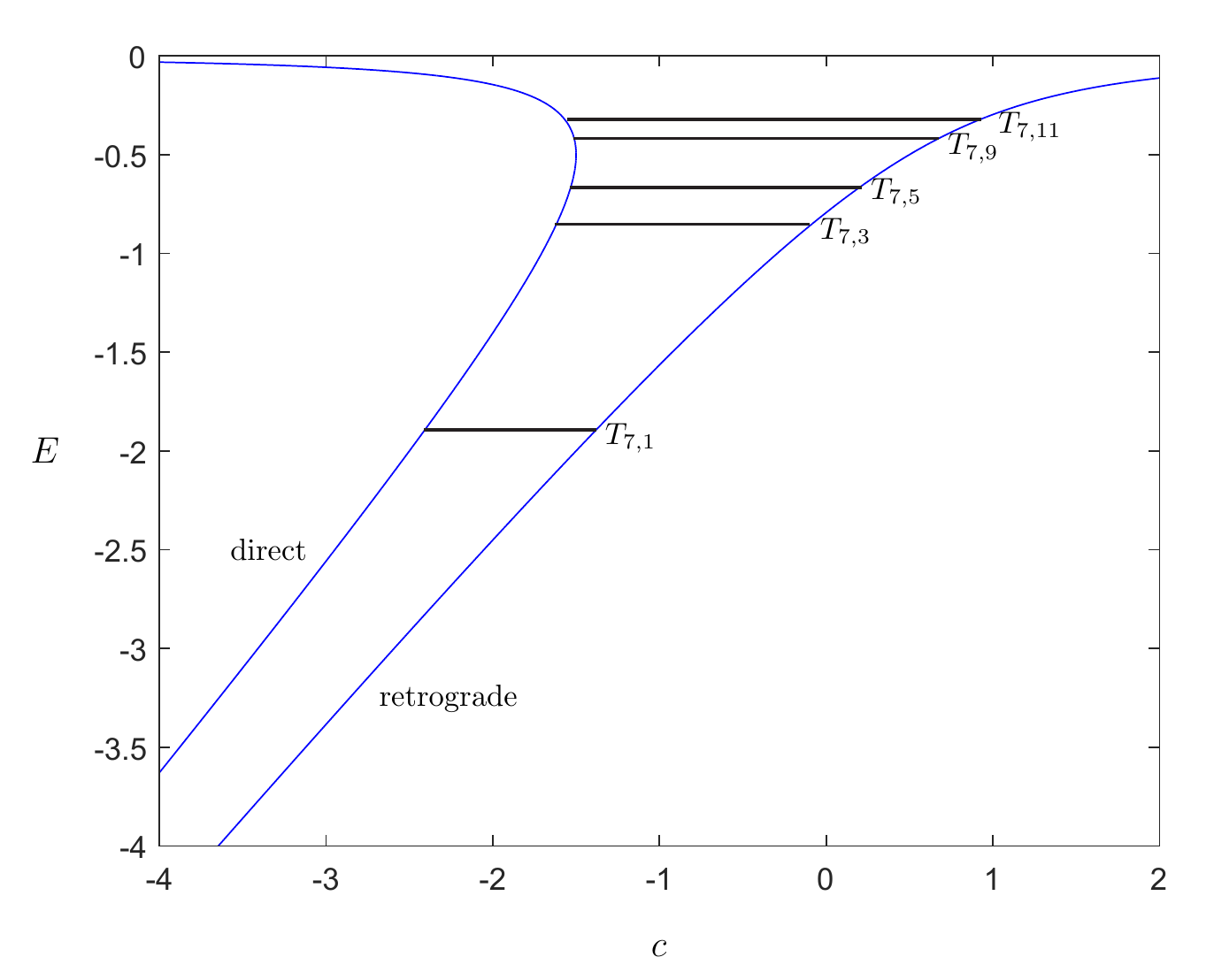}
 \caption{Some torus familes for $k=7$. The blue curves are the graph of the function $2E(c-E)^2 +1 =0$. The left curve is associated with direct circular orbits and the right one is associated with retrograde circular orbits. Observe that along each torus family, the energy level $c$ is increasing.}
 \label{circular}
\end{figure}
We abbreviate by $\Gamma_l$ the left component of the graph of \eqref{equationecc} and by $\Gamma_r$ the right component in the figure.  Each point on the graph represents a circular orbit in the rotating Kepler problem. Note that for every circular orbit the energy $H=c$ and the Kepler energy $E$ cannot be equal.  Indeed, if this is the case, then the left hand side of (\ref{eccentrieq}) becomes zero, while the right hand side is one.  This implies that in view of the relation $c=E+L$, the circular orbits associated with  a point  on $\Gamma_l$ (or $\Gamma_r$) is of negative angular momentum (or positive angular momentum). Consequently, $\Gamma_l$  corresponds to   direct  circular orbits and  $\Gamma_r$   to   retrograde circular orbits.
 
We now fix $c$. If $ c<c_J$, then (\ref{equationecc}) has three solutions $E_1 < E_2 < -1/2 < E_3 < 0$. By the previous argument, we see that on the bounded connected component of the energy hypersurface $H^{-1}(c)$, there exist precisely one retrograde and one direct circular orbits, denoted by $C_{\text{retro}}$ and $C_{\text{direct}}$,  whose Kepler energies are given by $E_{\text{retro}}:=E_1$ and $E_{\text{direct}}:=E_2$, respectively. On the unbounded component, there exists precisely one direct circular orbit of Kepler energy $E_{\text{direct}}^{\text{u}}:=E_3$, which is denoted by $C_{\text{direct}}^\text{u}$. In view of the relation $0 = 2EL^2+1$, the angular momenta of $C_{\text{retro}}$, $C_{\text{direct}}$ and  $C_{\text{direct}}^\text{u}$ are given by $L_{\text{retro}} = 1/\sqrt{-2E_{\text{retro}}}$, $L_{\text{direct}}= -1/\sqrt{-2E_{\text{direct}}}$ and  $L_{\text{direct}}^\text{u} = -1/\sqrt{-2E_{\text{direct}}^\text{u}}$, respectively. On the other hand, since in the inertial system the semi-major axis of a Kepler ellipse equals $1/-2E$, on the bounded component of $H^{-1}(c)$, the radius of $C_{\text{direct}}$ is bigger than that of $C_{\text{retro}}$.

\subsection{Bifurcations of torus families}\label{sec:bif}

To see bifurcation of the torus families, we assume that $E=E_{k,l}$.

\begin{Proposition}\label{probifurcation} {\rm (\cite[Section 6 and Appendix B]{RKP})} The $T_{k,l}$-torus family bifurcates from a $|k-l|$-fold covered direct circular orbit and ends at a $(k+l)$-fold covered retrograde circular orbit.
\end{Proposition}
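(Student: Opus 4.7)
The plan is to analyze the two endpoints of the $T_{k,l}$-torus family by tracking the eccentricity and the angular momentum as the torus parameter varies, and then to read off the covering multiplicities of the limiting circular orbits from the identity $\alpha(t)=\exp(it)\gamma(t)$ together with Kepler's third law.

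First I would parametrize the family by the angular momentum $L$ of the inertial Kepler orbit. The Kepler energy is constant and equal to $E_{k,l}=-\frac{1}{2}(k/l)^{2/3}$ along the torus family, so the relation $e^2=2EL^2+1$ shows that the eccentricity vanishes precisely when $L^2=-1/(2E_{k,l})=(l/k)^{2/3}$, i.e., at the two values $L_{\pm}=\pm(l/k)^{1/3}$. These are exactly the two endpoints of the family, and by the discussion in Section \ref{sec:peor} the endpoint $L=L_-$ is the direct circular orbit $C_{\text{direct}}$ and the endpoint $L=L_+$ is the retrograde circular orbit $C_{\text{retro}}$ of Kepler energy $E_{k,l}$.

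Second I would compute the covering multiplicity at each endpoint. By Kepler's third law, the inertial circular orbit of energy $E_{k,l}$ has angular speed $(-2E_{k,l})^{3/2}=k/l$. Substituting this into $\alpha(t)=\exp(it)\gamma(t)$, the direct inertial orbit (which rotates clockwise, so $\gamma(t)=r_0\exp(-i(k/l)t)$) becomes $\alpha(t)=r_0\exp(i(l-k)t/l)$, while the retrograde inertial orbit becomes $\alpha(t)=r_0\exp(i(l+k)t/l)$. Since every orbit in the $T_{k,l}$-torus family has period $2\pi l$, in this time the direct endpoint is covered $|l-k|$ times and the retrograde endpoint is covered $l+k$ times, which is exactly the claimed multiplicities.

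The main subtlety is to verify that the family really limits smoothly onto these circular orbits, with no additional degeneration occurring in its interior. This is part of the Liouville--Arnold picture for the rotating Kepler problem: the tori are common level sets of the two commuting integrals $E$ and $L$ on $\{E<0\}$, and in the slice $E=E_{k,l}$ the level set degenerates to a single circle precisely at $L=\pm 1/\sqrt{-2E_{k,l}}$. This smoothness is already laid out in \cite[Section 6 and Appendix B]{RKP}, so the real content of the proof is the Kepler-third-law computation carried out in the second paragraph above.
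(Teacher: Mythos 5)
Your proposal is correct and follows essentially the same route as the paper: both arguments use Kepler's third law to get the inertial angular speed $k/l$ of the circular orbits of energy $E_{k,l}$, pass to the rotating frame via $\alpha(t)=\exp(it)\gamma(t)$ to obtain angular velocities $1\mp k/l$ (equivalently periods $2\pi l/|k-l|$ and $2\pi l/(k+l)$), and divide the family period $2\pi l$ by these to read off the multiplicities $|k-l|$ and $k+l$. Your identification of the endpoints via $e^2=2E_{k,l}L^2+1=0$ at $L=\pm 1/\sqrt{-2E_{k,l}}$ matches the paper's use of the angular momenta $\mp 1/\sqrt{-2E}$ to locate the bifurcation energies.
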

\begin{proof} We first claim that the period of a direct or retrograde circular orbit of energy $E=E_{k,l}$ is given by
\begin{equation}\label{perioddirect}
\tau_{\text{direct}} =  \frac{ 2 \pi l}{|k-l|}  \quad \text{ or } \quad \tau_{\text{retro}} = \frac{ 2 \pi l}{k+l}, 
\end{equation}
respectively.  Indeed, the Kepler third law implies that in the inertial system direct and retrograde circular orbits of energy $E=E_{k,l}$ have the angular velocities $-k/l$ and $k/l$, respectively, and hence in the rotating frame the direct and retrograde circular orbits have the angular velocities 
\begin{equation}\label{eq:anguvelo}
\dot{\theta}_{\text{direct}}= 1-\frac{k}{l} \quad \text{ and } \quad \dot{\theta}_{\text{retro}}= 1+\frac{k}{l},
\end{equation} 
respectively. The claim is now straightforward.

Recall that the angular momenta of the direct and retrograde circular orbits are given by $-1/\sqrt{-2E}$ and $1/\sqrt{-2E}$, respectively. This shows that the $T_{k,l}$-torus family bifurcates from the (possibly multiply covered) direct circular orbit at 
\begin{equation}\label{energydirect}
c = c_{k,l}^{\text{direct}} = E_{k,l} -1/\sqrt{-2E_{k,l}}
\end{equation}
and ends at the (possibly multiply covered) retrograde circular orbit at 
\begin{equation*}\label{energydirect2}
c = c_{k,l}^{\text{retro}} = E_{k,l}+1/\sqrt{-2E_{k,l}}.
\end{equation*}
Assume that the direct circular orbit is $N$-fold covered. Then we obtain the relation $N \tau_{\text{direct}} = 2\pi l$. By   (\ref{perioddirect}) we find
\begin{equation*}
N = \frac{ 2\pi l}{\tau_{\text{direct}}} = |k-l|.
\end{equation*}
The assertion for the retrograde circular orbit can be proved in a similar way. This completes the proof of the proposition.
\end{proof}

\begin{Remark} \rm Equation \eqref{eq:anguvelo} implies that   the  direct circular orbit rotates clockwise for  $k>l$ and counterclockwise for $k<l$ and the retrograde circular orbit always rotates counterclockwise. 
\end{Remark}

One can describe the bifurcation of the  $T_{k,l}$-torus family by varying the eccentricity as follows. At $e=0$ which corresponds to the $|k-l|$-fold covered direct circular orbit, $T_{k,l}$-type orbits bifurcate. Such orbits will be referred to as   \textit{direct $T_{k,l}$-type orbits}. Note that the winding number of  direct $T_{k,l}$-type  orbits equals $l-k$. As $e$  increases, they  become more eccentric. The eccentricity increases until $e=1$ at which collisions happen. After that event, the eccentricity decreases and we again obtain $T_{k,l}$-type orbits, which will be referred to as  \textit{retrograde $T_{k,l}$-type orbits}. Their winding number equals $k+l$. As $e$ decreases,   retrograde $T_{k,l}$-type orbits become less eccentric and at $e=0$ the $T_{k,l}$-torus family ends at the $(k+l)$-fold covered retrograde circular orbit. We call this $e$-parameter family the \textit{$e$-homotopy}. We denote by $I_{\text{direct}}$ the interval in $e$ for direct orbits along which $e$ increases and  by $I_{\text{retro}}$ the interval (also in $e$) for retrograde orbits along which $e$ decreases. In what follows, we think of those intervals as open, closed or half open intervals depending on the situation. Note that this does not affect our argument.

In view of    (\ref{hamltiodnsRKP})     we have
\begin{equation*}
p_r = \pm \sqrt{ 2 \bigg( c + \frac{1}{|q|} - L \bigg) - \frac{L^2}{|q|^2} }.
\end{equation*}
Equation (\ref{transvec}) then gives rise to
\begin{equation}\label{dhasidlfues}
X_H (z) = \pm \sqrt{ 2 \bigg( c + \frac{1}{|q|} - L \bigg) - \frac{L^2}{|q|^2} } \partial _r + \bigg( \frac{L }{|q|^2}+1 \bigg) \partial_{\theta} + \frac{L^2 - |q|}{|q|^3} \partial_{p_r}.
\end{equation}
Since   by   \eqref{eccentrieq}   a    direct $T_{k,l}$-type orbit has angular  momentum $-\sqrt{ {1-e^2}/{-2E}}$ and  a  retrograde $T_{k,l}$-type orbit has angular momentum $\sqrt{ {1-e^2}/{-2E}}$, this implies that along a retrograde $T_{k,l}$-type orbit angular velocity $\dot{\theta}$ does not vanish. On the other hand, along a direct $T_{k,l}$-type orbit, we have $\dot{\theta}<0$ for $r<\sqrt{-L}$ and $\dot{\theta}>0$ for $r>\sqrt{-L}$.

\subsection{Symmetries}\label{sdfsdsssasasd}
 
\begin{Lemma}\label{rotsyme} {\rm (\cite[Section 8.2.1]{book})} The trajectory of each $T_{k,l}$-type orbit is invariant under the rotation by the angle $2\pi j l /k$, $j=1,2,\cdots, k-1$.
\end{Lemma}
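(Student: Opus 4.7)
The plan is very short because the symmetry is built into the very description of a $T_{k,l}$-type orbit. Recall that $\alpha(t) = e^{it}\gamma(t)$, where $\gamma$ is a Kepler ellipse of period $T$ satisfying $kT = 2\pi l$. In particular $T = 2\pi l / k$, and $\gamma$ is $T$-periodic.

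The key computation would be to substitute $t+jT$ into $\alpha$: since $\gamma(t+jT) = \gamma(t)$, one gets
$$\alpha(t+jT) = e^{i(t+jT)}\gamma(t+jT) = e^{ijT}\,e^{it}\gamma(t) = e^{2\pi i jl/k}\,\alpha(t).$$
Thus time-shifting $\alpha$ by $jT$ is the same as rotating its image by the angle $2\pi jl/k$. In particular the image $\alpha(\R) = \alpha([0,2\pi l])$ is mapped to itself by the rotation $R_{2\pi j l/k}$, for each $j = 1, \ldots, k-1$. This is exactly the claimed invariance.

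The only thing to note is that this does give a genuine cyclic symmetry of order $k$ (rather than collapsing to something smaller), because $k$ and $l$ are assumed coprime: the rotations $\{e^{2\pi ijl/k}\}_{j=0}^{k-1}$ are then all distinct and form the full cyclic group of order $k$ in $S^1$. No obstacle is expected; the statement is essentially a direct unpacking of the definition combined with $kT = 2\pi l$.
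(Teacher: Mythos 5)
Your computation is exactly the paper's proof: the authors verify $\alpha(t+T)=\exp(2\pi i l/k)\alpha(t)$ using $kT=2\pi l$ and the $T$-periodicity of $\gamma$, and the general rotation by $2\pi jl/k$ follows by iterating, just as in your direct substitution of $t+jT$. The remark about coprimality giving the full cyclic group of order $k$ is a correct (and harmless) addition not needed for the stated invariance.
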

\begin{proof}  It follows immediately from the fact that  $kT=2 \pi l$  and the observation
\begin{equation} \label{goodcompute}
\alpha(t+T) = \exp( it + iT)\gamma(t+T)  = \exp( 2 \pi i  l/k) \exp(it) \gamma(t) = \exp(  2 \pi i  l/k)  \alpha(t) .
\end{equation}
\end{proof}

On top of the rotational symmetry, trajectories of torus type orbits admit another symmetry.
 
\begin{Lemma}\label{lemmareflds} Assume that the perihelion of $\gamma$ is the starting point $\gamma(0)$ and   its argument is $\theta = \theta_0$.     Then the trajectory of a $T_{k,l}$-type orbit is invariant under the reflection with respect to the line $y=\tan ( j \pi    /k + \theta_0) x$, $j=0,1,\cdots, k-1$. By convention, we regard   the line $y=\tan (\pi/2) x $ as  the $y$-axis.
\end{Lemma}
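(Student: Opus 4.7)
\emph{Proof plan.} The plan is to extract the $k$ claimed reflection symmetries of the trajectory of $\alpha(t)=e^{it}\gamma(t)$ from two inputs: (i) the classical symmetry of the Kepler ellipse $\gamma$ across its major axis, and (ii) the observation that during one period $2\pi l$ the rotating--frame orbit $\alpha$ traverses $k$ perihelions and $k$ aphelions. Write $S_\theta:\C\to\C$, $z\mapsto e^{2i\theta}\bar z$, for the reflection across the line through the origin at angle $\theta$; the target statement is that the trajectory is invariant under $S_{\theta_0+j\pi/k}$ for every $j=0,1,\dots,k-1$.

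Since $\gamma(0)$ is the perihelion at angle $\theta_0$, the classical Kepler identity $\gamma(-t)=S_{\theta_0}(\gamma(t))$ holds. Combined with the $T$-periodicity $\gamma(mT+s)=\gamma(s)$ and the defining relation $T=2\pi l/k$, the substitutions $\alpha(mT\pm s)=e^{imT\pm is}\gamma(\pm s)$ and $\alpha((m+\tfrac12)T\pm s)=e^{i(m+\frac12)T\pm is}\gamma(\tfrac T2\pm s)$ give, for each $m\in\{0,1,\dots,k-1\}$ and each $s\in\R$,
\begin{align*}
S_{\theta_0+2\pi ml/k}\bigl(\alpha(mT-s)\bigr)&=\alpha(mT+s),\\
S_{\theta_0+(2m+1)\pi l/k}\bigl(\alpha((m+\tfrac12)T-s)\bigr)&=\alpha((m+\tfrac12)T+s).
\end{align*}
Geometrically these identities express the fact that the trajectory of $\alpha$ is symmetric across the radial line through each of its $k$ perihelions (first line) and each of its $k$ aphelions (second line). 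Both identities reduce to mechanical substitutions: expand $S_\theta(\alpha(\cdot))$, use the Kepler reflection to convert $\overline{\gamma(-s)}$ back into $\gamma(s)$, and use $imT=2\pi iml/k$ to absorb the phase into $e^{i(mT+s)}$.

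It remains to identify the $2k$ axes obtained above with the $k$ axes claimed in the lemma. Modulo $\pi$, the relevant angles are $\theta_0+\pi(Nl\bmod k)/k$ for $N\in\{0,1,\dots,2k-1\}$, with even $N$ coming from perihelions and odd $N$ from aphelions. Because $\gcd(k,l)=1$, the map $N\mapsto Nl\bmod k$ is surjective onto $\{0,1,\dots,k-1\}$, so each axis $\theta_0+j\pi/k$ is indeed realized. The only nontrivial aspect is a parity bookkeeping: when $k$ is odd, the perihelion axes alone (even $N$) already hit every residue modulo $k$ since $\gcd(2,k)=1$; when $k$ is even (forcing $l$ odd), the perihelion axes realize precisely the $k/2$ reflections with $j$ even, while the aphelion axes realize the remaining $k/2$ reflections with $j$ odd. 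The union gives all $k$ desired reflections, completing the proof.
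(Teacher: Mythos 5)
Your proposal is correct and follows essentially the same route as the paper: both rest on the single reflection symmetry coming from the Kepler identity $\overline{\gamma(t)}=\gamma(-t)$ (reflection across the perihelion axis) combined with the rotational symmetry of Lemma \ref{rotsyme} and the fact that $\gcd(k,l)=1$ makes the resulting axes sweep out all residues $j\pi/k$. The paper reaches the conclusion slightly more economically by composing $\mathrm{Ref}(0)$ with the order-$k$ rotation group via $\mathrm{Rot}(\theta_1)\mathrm{Ref}(\theta_2)=\mathrm{Ref}(\theta_1/2+\theta_2)$, whereas you write out the perihelion- and aphelion-axis reflections explicitly and then do the (correct) parity bookkeeping, but the underlying argument is the same.
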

\begin{proof} Without loss of generality, we may assume that $\theta_0=0$. We first claim that the trajectory is symmetric under the reflection with respect to the $x$-axis. Indeed, we observe that
\begin{equation*}
\overline{\alpha(t)}  = \overline{\exp( it ) \gamma(t) }  = \exp( -it ) \overline{\gamma(t) } = \exp( - it) \gamma( -t) = \alpha(-t).
\end{equation*}
In other words, the reflected orbit $\overline{\alpha}$ is the inverse of the original orbit $\alpha$. This proves the claim. 

Recall that the reflection matrix $\text{Ref}(\theta)$ with respect to the line $y=\tan(\theta) x$ is given by
\begin{equation*}
\text{Ref}(\theta) = \begin{pmatrix} \cos 2\theta & \sin 2 \theta \\ \sin 2 \theta & - \cos 2 \theta \end{pmatrix}.
\end{equation*}
One can easily check that the reflection and the rotation matrices satisfy the relation
\begin{equation*}
\text{Rot}(\theta_1 ) \text{Ref}(\theta_2) = \text{Ref} \bigg( \frac{1}{2} \theta_1 + \theta_2 \bigg).
\end{equation*}
Then the assertion  follows from  Lemma \ref{rotsyme} and the claim by setting $\theta_1 = - 2  j \pi    /k$ and $\theta_2 = j \pi   /k$.  This completes the proof of the lemma.
\end{proof}

\subsection{Disasters}\label{disatser} In this section, we examine  disasters which happen during the $e$-homotopy.

\subsubsection{Collisions} Fix any $T_{k,l}$-torus family. It is obvious that collisions happen precisely at $e=1$. Then by \cite[Lemma 2]{invariant}   the event $(I_0)$ happens only  at $e=1$.

\subsubsection{Triple points}

Suppose that $q$ is an intersection point on a $T_{k,l}$-type orbit of energy $c$. Since the angular momentum $L$ is constant along the orbit,  equation \eqref{dhasidlfues}  shows that the intersection point $q$ is double. We conclude that the event $(III)$ does not happen in the rotating Kepler problem.

\subsubsection{Hitting the boundary of the Hill's region}\label{sdfsdgsesd33} When a $T_{k,l}$-type orbit touches the boundary of the Hill's region, we must have $\dot{\theta}=0$ since the orbit has a cusp at the touching point. In view of \eqref{dhasidlfues}, we then have $L=-r_{\max}^2$ for the case $k>l$ and $L=-r_{\min}^2$ for the case $k<l$. In any case, angular momentum $L$ is negative. Since retrograde $T_{k,l}$-type orbits always have positive angular momentum, we see that    retrograde $T_{k,l}$-type orbits do not touch the boundary of the Hill's region. 

Assume the case $k>l$. As mentioned above,   a direct $T_{k,l}$-type orbit touches the boundary if and only if 
$$
L^2 = r_{\max}^4 = \bigg( \frac{1+e}{2E_{k,l}} \bigg)^4.
$$
Equation \eqref{eccentrieq}, however, implies that
$$
L^2 = \frac{e^2-1}{2E_{k,l}}.
$$
Putting together these two equalities shows that in the case $k>l$, a direct $T_{k,l}$-torus type orbit touches the boundary if and only if its eccentricity $e$ solves the equation
$$
   8(1-e)E_{k,l}^3 + (1+e)^3 =   0.
$$
In view of  $E_{k,l}<0$ the uniqueness of $e$ solving this equation is obvious, which implies that during the   $e$-homotopy for $k>l$, the event $(I_{\infty})$ happens precisely once at $ e = e_{k,l}^{\infty} \in I_{\text{direct}}$, see Figure \ref{homotopyofrkp}.

\;\;

For the case $k<l$, in a similary way we have that 
$$
\frac{e^2-1}{2E_{k,l}}= L^2 = r_{\min}^4 = \bigg( \frac{1-e}{2E_{k,l}} \bigg)^4
$$
which implies that 
$$ 
 8(1+e)E_{k,l}^3 + (1-e)^3 =   0  . 
$$
Consequently,  during the   $e$-homotopy for $k<l$, the event $(I_{-\infty})$ happens precisely once at $ e = e_{k,l}^{-\infty} \in I_{\text{direct}}$, see Figure \ref{homotopyofrkp2}.

\subsubsection{Self-tangencies} \label{sec:tanself}

Suppose that a  self-tangency happens at $ q(t_0) = q(t_1)$ for $ t_0 \neq t_1$. Since the tangent vectors are parallel at the self-tangency,  in view of (\ref{dhasidlfues}) we see that a self-tangency can happen only on the circle $r=r_{\text{inv}}:=\sqrt{-L}$. From this we obtain the following two necessary conditions for the existence of self-tangencies: 

\begin{enumerate}[label=(\roman*)]
\item the angular momentum is negative and hence the orbit is direct; \vspace{1mm}

\item $r_{\text{inv}} \leq r_{\max}$ for the case $k>l$ and $r_{\text{inv}} \geq r_{\min}$ for the case $k<l$, where $r_{\max}$ and $r_{\min}$ are defined as in (\ref{eqrmnsridncx}).

\end{enumerate}
It follows from (i) that the event $(II^+)$  does not happen on $I_{\text{retro}}$. 

Assume that the self-tangency at $q(t_0) = q(t_1)$ is direct. Since this point is also an intersection point, we have $r(t_0) = r(t_1)$ and $\theta(t_0) = \theta(t_1)$. Moreover, since $r(t_0) = r(t_1)$, it follows from  (\ref{dhasidlfues})  that $\dot{r}(t_0) = \dot{r}(t_1)$ and $\dot{\theta}(t_0)  = \dot{\theta}(t_1)=0$. Therefore, in view of the uniqueness theorem for second order ordinary differential equations, it follows that $t_0 = t_1$ which contradicts   $t_0 \neq t_1$. Consequently, direct self-tangencies do not happen in the rotating Kepler problem.

\;\;
 
Finally, we now consider an inverse self-tangency on $I_{\text{direct}}$. We assume that at $q(t_0)=q(t_1)$ an inverse self-tangency happens. Abbreviate $r_{\text{inv}} = |q(t_0)|$. Since the orbit is direct, we have $L= -\sqrt{(1-e^2)/-2E_{k,l}}$ and  we then obtain $r_{\text{inv}} =  \sqrt[4]{ (1-e^2)/-2E_{k,l} }$.

We first assume that $k>l$ and  compute that
\begin{eqnarray*}
 \sqrt[4]{ \frac{1-e^2}{ -2E_{k,l}}} < r_{\max} \;\; &\Leftrightarrow& \;\; \sqrt[4]{1-e}< \sqrt[4]{\frac{1+e}{-2E_{k,l}}}^3 \\
&\Leftrightarrow& \;\; 1-e< \bigg(\frac{1+e}{-2E_{k,l}}\bigg)^3 \\
&\Leftrightarrow& \;\; 8 (1-e)E_{k,l}^3 + (1+e)^3 > 0.
\end{eqnarray*}
Recall from   Section \ref{sdfsdgsesd33} that $e=e_{k,l}^{\infty}$ is a unique solution of $8 (1-e)E_{k,l}^3 + (1+e)^3 =0$. Since $8 (1-e)E_{k,l}^3 + (1+e)^3>0$ for $e>e_{k,l}^{\infty}$, this implies that if the event $(II^+)$ is achieved at $e=e_0$, then we have $ e_{k,l}^{\infty} <e_0 $.   One can easily obtain the same result for the event $(I_{-\infty})$  the case $k<l$.

We now assume that  a direct $T_{k,l}$-type orbit $\alpha$ has an inverse self-tangency. By the previous discussion it follows that $\alpha$ has $k$ exterior loops, where the number $k$ of the exterior loops follows from the rotational symmetry. Note that there is an exterior loop for each aphelion. We claim that all the inverse self-tangencies   must lie on the $k$ exterior loops.   To this end, abbreviate by $\alpha_{\text{int}}$ the part of $\alpha$ with the $k$ exterior loops removed.  In view of equation \eqref{dhasidlfues}   we have $ r<r_{\text{inv}}$  along $\alpha_{\text{int}}$. By means of the argument given in the last paragraph of Section \ref{sec:bif}, this implies that   along  $\alpha_{\text{int}}$   angular velocity $\dot{\theta}$ does not vanish. Since inverse self-tangencies happen only at points with $\dot{\theta}=0$, this proves the claim. Consequently, inverse self-tangencies along a direct $T_{k,l}$-type orbit must lie on the intersection of exterior loops and the circle $r=r_{\text{inv}}$.

\vspace{2mm}

Recall that the $e$-homotopy of a $T_{k,l}$-torus family is defined on $I_{\text{direct}} \cup I_{\text{retro}}$. We divide the interval $I_{\text{direct}}$ into two subintervals (in $e$): $I_{\text{direct}}^1$ from $0$ to $e_{k,l}^{\infty}$ (or $e_{k,l}^{-\infty}$) and $I_{\text{direct}}^2$ from $e_{k,l}^{\infty}$ (or $e_{k,l}^{-\infty}$) to $1$. All the results obtained so far show the following.

\begin{Proposition} \label{corstar} \

\begin{enumerate}[label=(\roman*)]

\item If $k>l$, then the $J^+$-invariant does not change on each interval $I_{\text{direct}}$ and $I_{\text{retro}}$ and the $\mathcal{J}_1$ and $\mathcal{J}_2$ invariants do not change along the $e$-homotopy, see Figure \ref{homotopyofrkp}. \vspace{1mm}

\item If $k<l$, then the quantity $J^+$ are invariant   on   $I_{\text{direct}}^1$, $I_{\text{direct}}^2$ and $I_{\text{retro}}$  and  $\mathcal{J}_1$ and $\mathcal{J}_2$ are invariant   on   $I_{\text{direct}}^1$ and $I_{\text{direct}}^2 \cup I_{\text{retro}}$, see Figure \ref{homotopyofrkp2}. 
\end{enumerate}

\end{Proposition}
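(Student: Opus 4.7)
The proof of Proposition \ref{corstar} is a bookkeeping argument: I combine the transformation rules for the three invariants, recalled in Sections \ref{subsection2.2} and \ref{seubsection2.3}, with the catalogue of disasters compiled in Section \ref{disatser}. The plan is to tabulate, on each of the subintervals appearing in the statement, which of the six events $(I_0)$, $(I_\infty)$, $(I_{-\infty})$, $(II^+)$, $(II^-)$, $(III)$ can occur, and then simply read off the conclusion from the invariance properties of each quantity.

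I first recall what each invariant is insensitive to. By construction, $J^+$ is preserved under $(II^+)$ and $(III)$, and, being additive under connected sums, also under $(I_\infty)$; it can change only under $(II^-)$, $(I_0)$, and $(I_{-\infty})$. The invariants $\mathcal{J}_1$ and $\mathcal{J}_2$ are in addition preserved under the remaining Stark-Zeeman homotopy events $(I_0)$ and $(I_\infty)$, so they can change only under $(II^-)$ or $(I_{-\infty})$. From Section \ref{disatser} I then know that $(II^-)$ is absent from every $e$-homotopy (Section \ref{sec:tanself}); that $(III)$ is absent because constancy of the angular momentum together with \eqref{dhasidlfues} prevents three branches from sharing a tangent line at a common base point; that $(I_0)$ occurs precisely at $e=1$, separating $I_{\text{direct}}$ from $I_{\text{retro}}$; that for $k>l$ the event $(I_\infty)$ occurs at the unique $e = e_{k,l}^{\infty} \in I_{\text{direct}}$ while $(I_{-\infty})$ is absent; that for $k<l$ the event $(I_{-\infty})$ occurs at the unique $e = e_{k,l}^{-\infty} \in I_{\text{direct}}$ while $(I_\infty)$ is absent; and that $(II^+)$ may occur only on the portion of $I_{\text{direct}}$ past $e_{k,l}^{\infty}$ (respectively $e_{k,l}^{-\infty}$), and never on $I_{\text{retro}}$ since retrograde orbits carry positive angular momentum.

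For part (i), on $I_{\text{direct}}$ the only possible events are $(I_\infty)$ and $(II^+)$, both of which preserve all three invariants; on $I_{\text{retro}}$ no event occurs at all. Crossing $e=1$ triggers a single $(I_0)$ event, under which $J^+$ may jump but $\mathcal{J}_1$ and $\mathcal{J}_2$ are preserved; this is exactly why the $\mathcal{J}$-invariants are claimed constant along the entire $e$-homotopy whereas $J^+$ is claimed constant only on each of the two open subintervals separated by the collision. For part (ii), on $I_{\text{direct}}^1$ no event occurs; at $e_{k,l}^{-\infty}$ the event $(I_{-\infty})$ may alter all three invariants, which forces the split between $I_{\text{direct}}^1$ and $I_{\text{direct}}^2$; on $I_{\text{direct}}^2$ only $(II^+)$ events can occur, which preserve all three invariants; crossing $e=1$ preserves $\mathcal{J}_1,\mathcal{J}_2$ but may change $J^+$; and $I_{\text{retro}}$ is event-free. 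Assembling these observations yields exactly the invariance pattern stated in the proposition.

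There is no serious obstacle here: Section \ref{disatser} already does all the dynamical work, and the proposition is a formal consequence of the transformation rules. The only mild point requiring attention is the absence of events on $I_{\text{retro}}$, but positivity of $L$ together with \eqref{dhasidlfues} immediately rules out both self-tangencies and boundary touches there.
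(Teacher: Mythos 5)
Your bookkeeping of which events can occur on which subinterval, and of which invariant is insensitive to which event, agrees with the paper's Section \ref{disatser} and is correct as far as it goes. However, you have passed over the one point to which the paper's proof is actually devoted: to invoke the invariance of $J^+$, $\mathcal{J}_1$ and $\mathcal{J}_2$ along the family at all, you need the $e$-homotopy to be a generic homotopy, respectively a Stark-Zeeman homotopy, and both notions require (see Definition \ref{defhomotopy}) that the curves fail to be generic immersions at only \emph{finitely many} parameter values. Section \ref{disatser} establishes uniqueness of the parameter for $(I_0)$, $(I_\infty)$, $(I_{-\infty})$ and excludes direct self-tangencies and $(III)$ outright, but it gives no control on the number of parameters $e$ at which inverse self-tangencies occur on $I_{\text{direct}}^2$ (for $k>l$, on the part of $I_{\text{direct}}$ past $e_{k,l}^{\infty}$); a priori these could accumulate, in which case the family is not known to be a Stark-Zeeman homotopy and your ``formal consequence of the transformation rules'' does not apply. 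Your sentence ``There is no serious obstacle here'' is therefore precisely where your argument has a gap.

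The paper closes this gap by extending the definition of $J^+$ to immersions which may have inverse self-tangencies (but no direct ones): one perturbs such a curve to a generic immersion and checks, using the two possible resolutions of an inverse self-tangency (Figure \ref{invtan}) and the invariance of $J^+$ under $(II^+)$, that the resulting value is independent of the perturbation. Then for any $e_0$ in the relevant subinterval at which a self-tangency occurs, the orbits at $e_0\pm\epsilon$ are generic immersions, are perturbations of the orbit at $e_0$, and are obtained from each other by crossings through inverse self-tangencies, so they share the same $J^+$ and determine $J^+$ at $e_0$. This local-constancy argument is what actually yields constancy of the invariants on the stated subintervals; you need to supply it (or an equivalent finiteness argument for the $(II^+)$ events) to complete your proof.
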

\begin{proof} The only  issue  is that we do not know that there exist at most finitely many disasters $(II^+)$ on $I_{\text{direct}}^2$. This can be solved as follows. We would define the $J^+$-invariant for immersions possibly having inverse self-tangencies (but no direct self-tangencies). Given such an immersion $K$ we slightly perturb it to a generic immersion $\widetilde{K}$. We then define $ J^+(K):= J^+(\widetilde{K})$. This definition is independent of the choice of a perturbation. Indeed, for a (small) perturbation we have only two cases illustrated in Figure \ref{invtan}. 
\begin{figure}[h]
  \centering
  \includegraphics[width=0.5\linewidth]{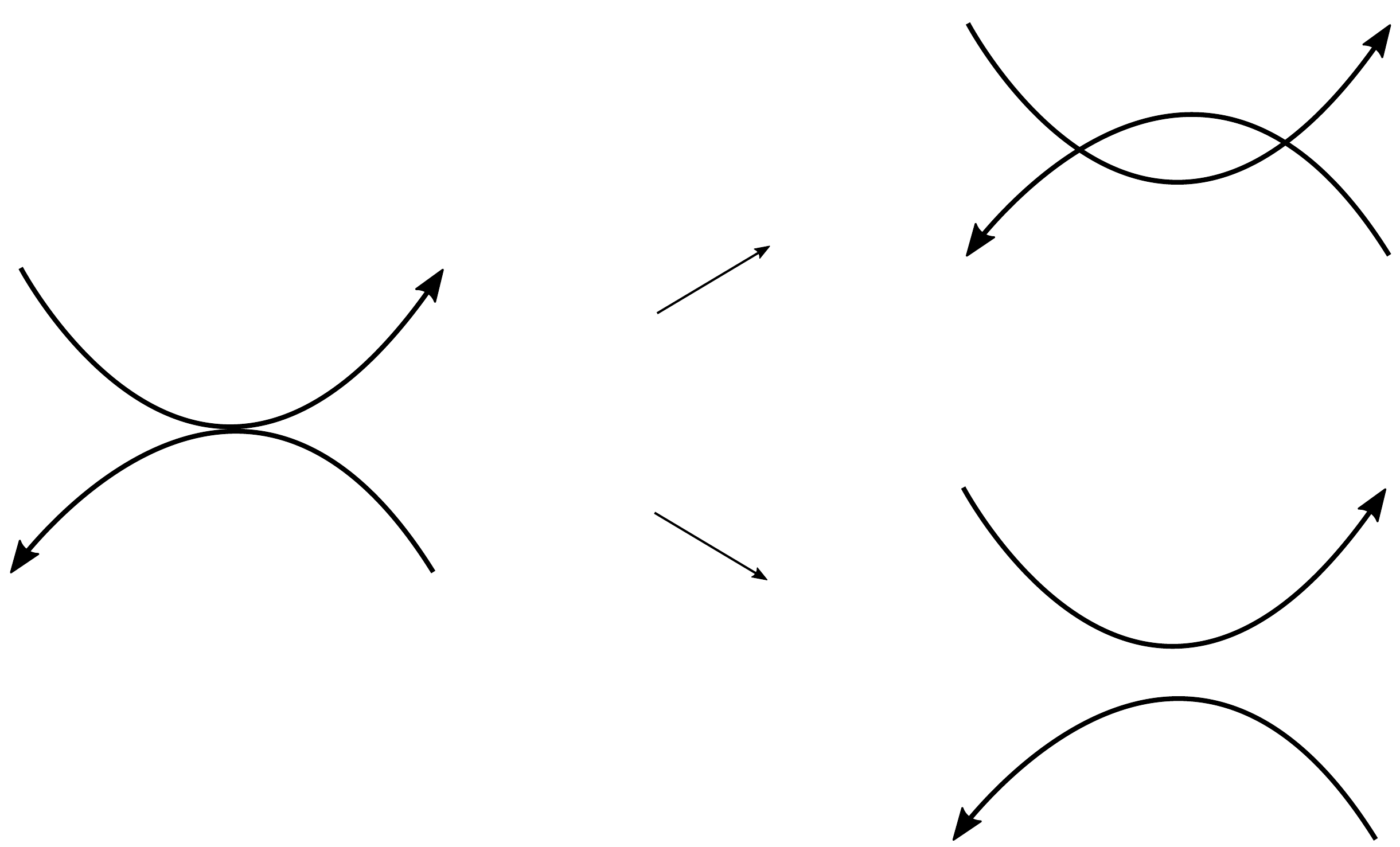}
    \caption{Possible perturbations of an inverse self-tangency}
 \label{invtan}
\end{figure}
Observe that the two perturbations in the figure can be obtained from each other via the crossing through an inverse self-tangency. Since the $J^+$-invariant does not change under  such an event,  this shows the well-definedness of $J^+(K)$. 

If an inverse self-tangency happens along a direct $T_{k,l}$-type orbit $\gamma$ with $e \in I_{\text{direct}}^2$, then there exists $\epsilon>0$ such that $e\pm \epsilon \in I_{\text{direct}}^2$ and the corresponding $T_{k,l}$-type orbits are generic immersions. Moreover, they are perturbations of $\gamma$, where each of them can be obtained from the other via crossings through inverse self-tangencies. In particular, their $J^+$-invariants coincide with each other and they determine $J^+(\gamma)$. Consequently, even though we do not know that the $T_{k,l}$-torus family is a generic homotopy or  Stark-Zeeman homotopy, we can compute its invariants by perturbing it slightly to obtain a generic homotopy or Stark-Zeeman homotopy. This completes the proof of the proposition.
\end{proof}

\begin{Remark} \rm As numerical experiments support, it is conceivable that during each $T_{k,l}$-torus family the disaster $(II^+)$ happens at most once.
\end{Remark}

\begin{figure}[t]
\begin{subfigure}{0.24\textwidth}
  \centering
  \includegraphics[width=0.95\linewidth]{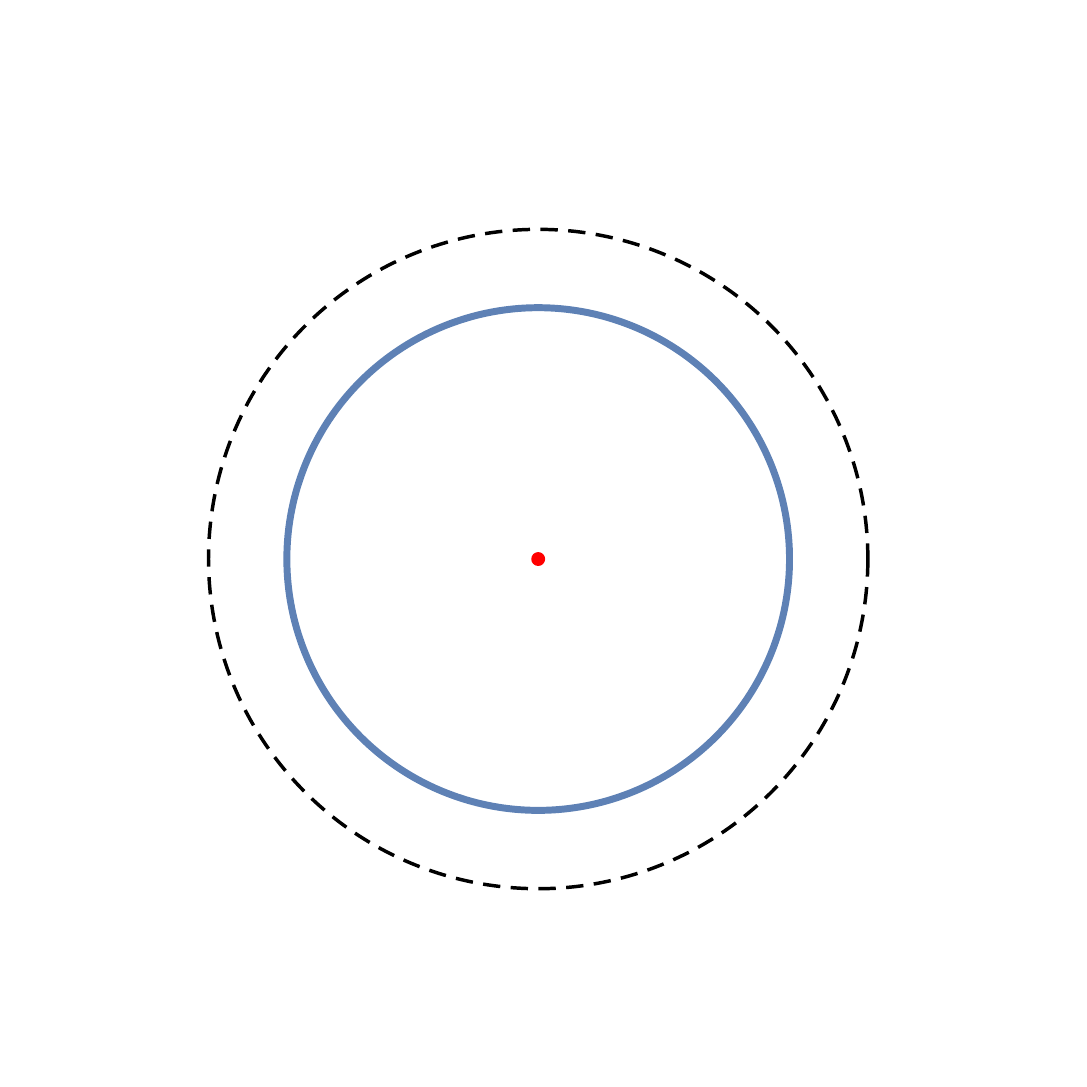}
  \caption{ A $5-2=3$-fold direct circular orbit}
  \label{  }
\end{subfigure}
\begin{subfigure}{0.24\textwidth}
  \centering
  \includegraphics[width=0.95\linewidth]{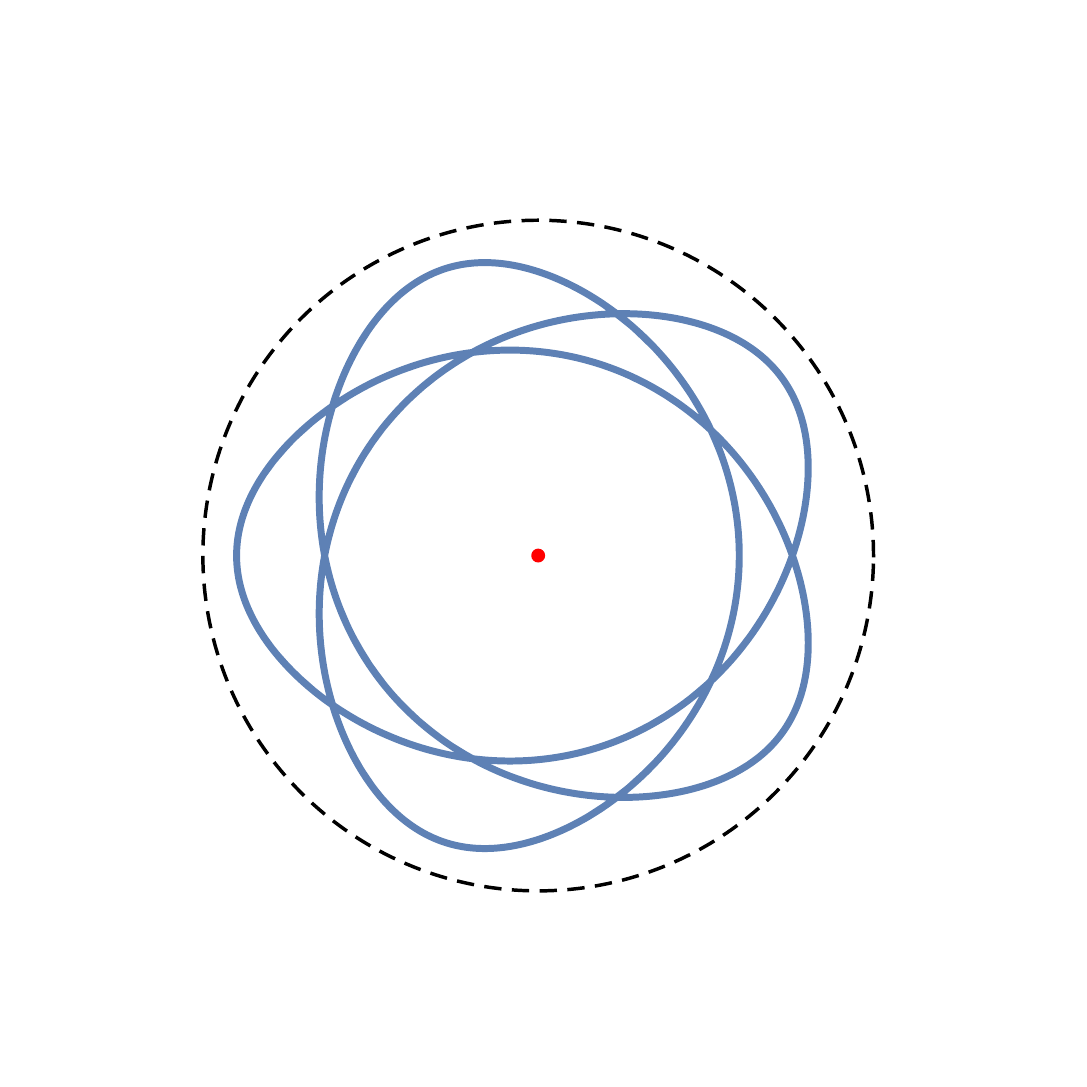}
  \caption{direct, $e=0.2$}
 \label{   }
\end{subfigure}
\begin{subfigure}{0.24\textwidth}
  \centering
  \includegraphics[width=0.95\linewidth]{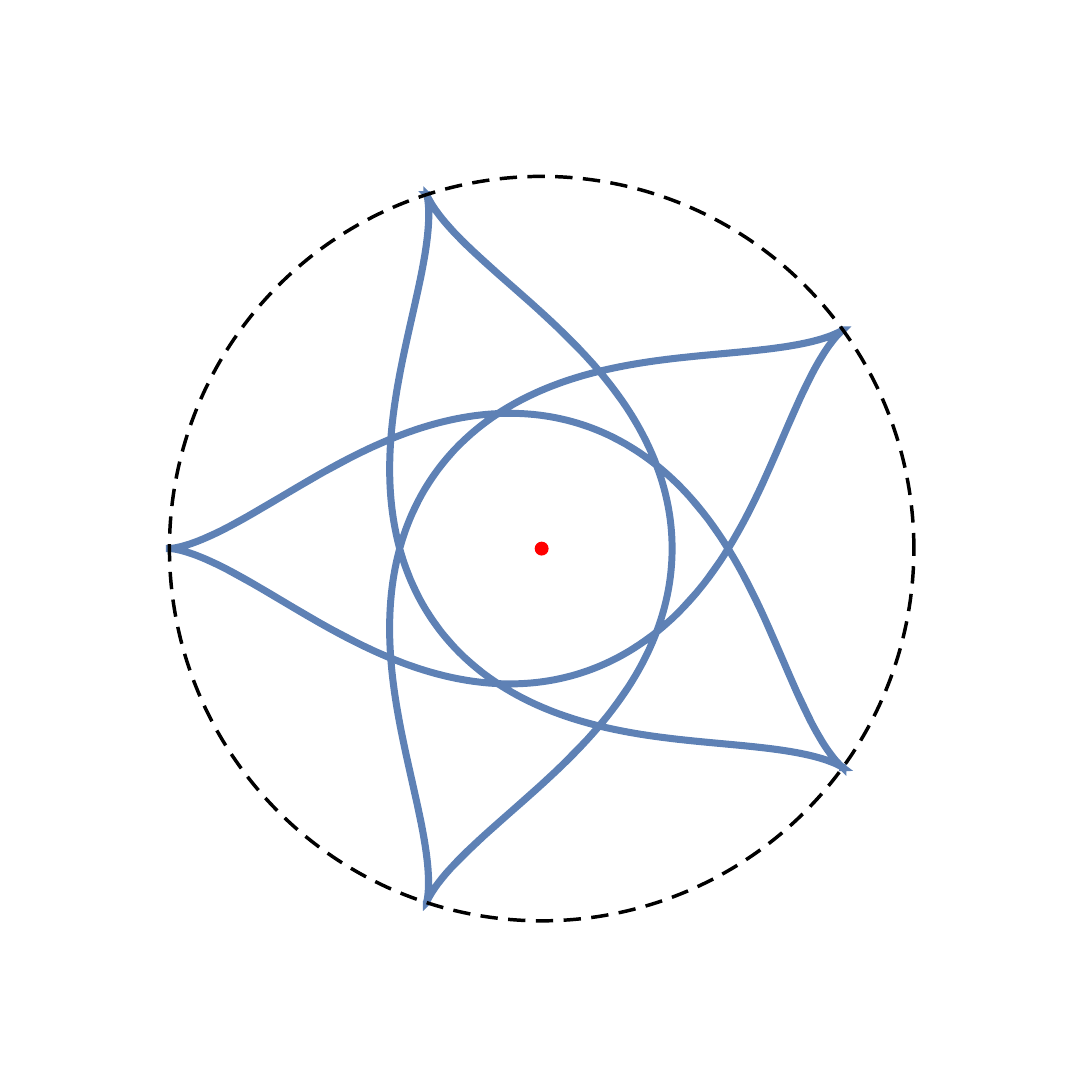}
  \caption{direct, $e=e_{5,2}^{\infty} \approx 0.481$}
 \label{   }
\end{subfigure}
\begin{subfigure}{0.24\textwidth}
  \centering
  \includegraphics[width=0.95\linewidth]{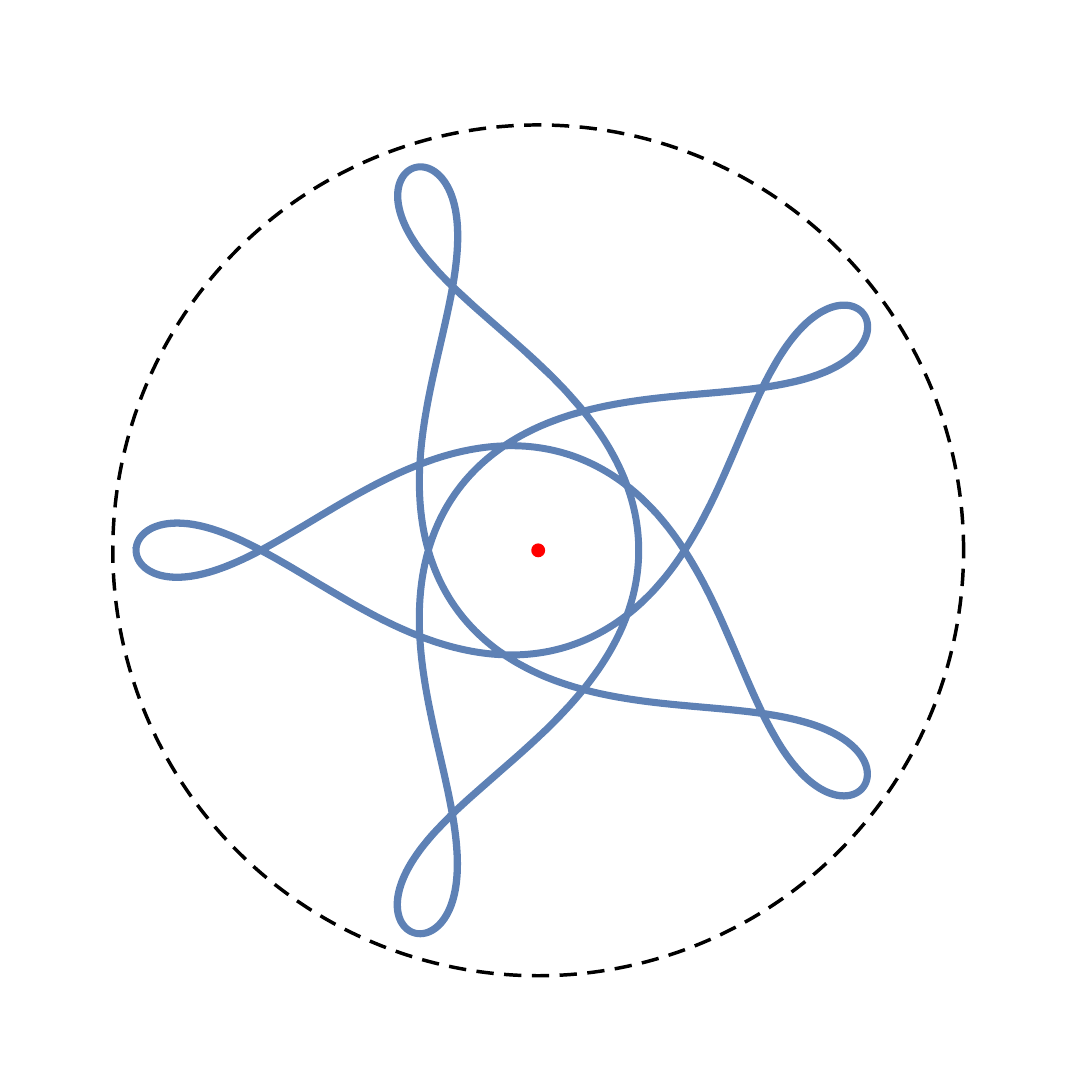}
  \caption{direct, $e=0.6$}
 \label{   }
\end{subfigure}
\begin{subfigure}{0.24\textwidth}
  \centering
  \includegraphics[width=0.95\linewidth]{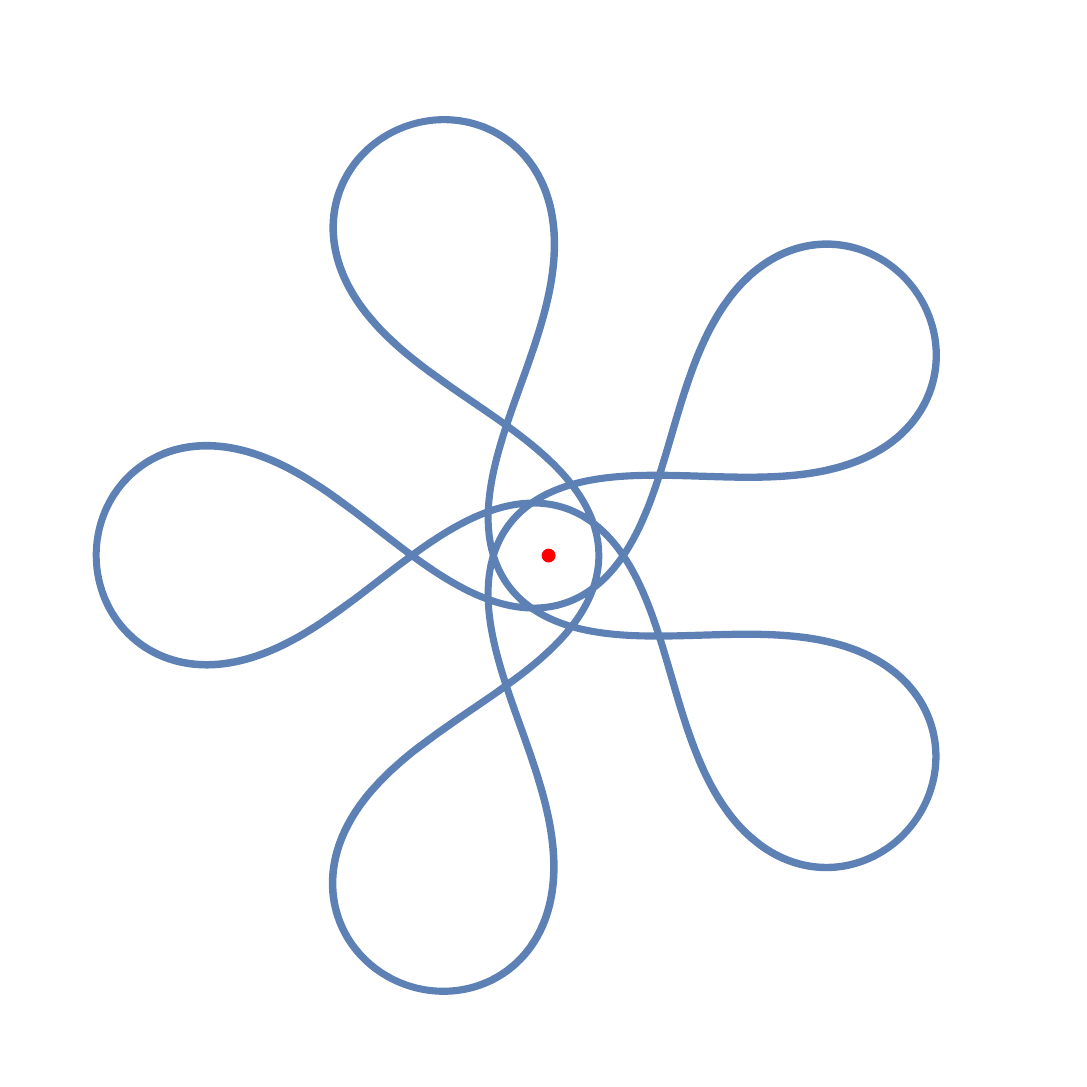}
  \caption{direct, $e=0.9$}
 \label{   }
\end{subfigure}
\begin{subfigure}{0.24\textwidth}
  \centering
  \includegraphics[width=0.95\linewidth]{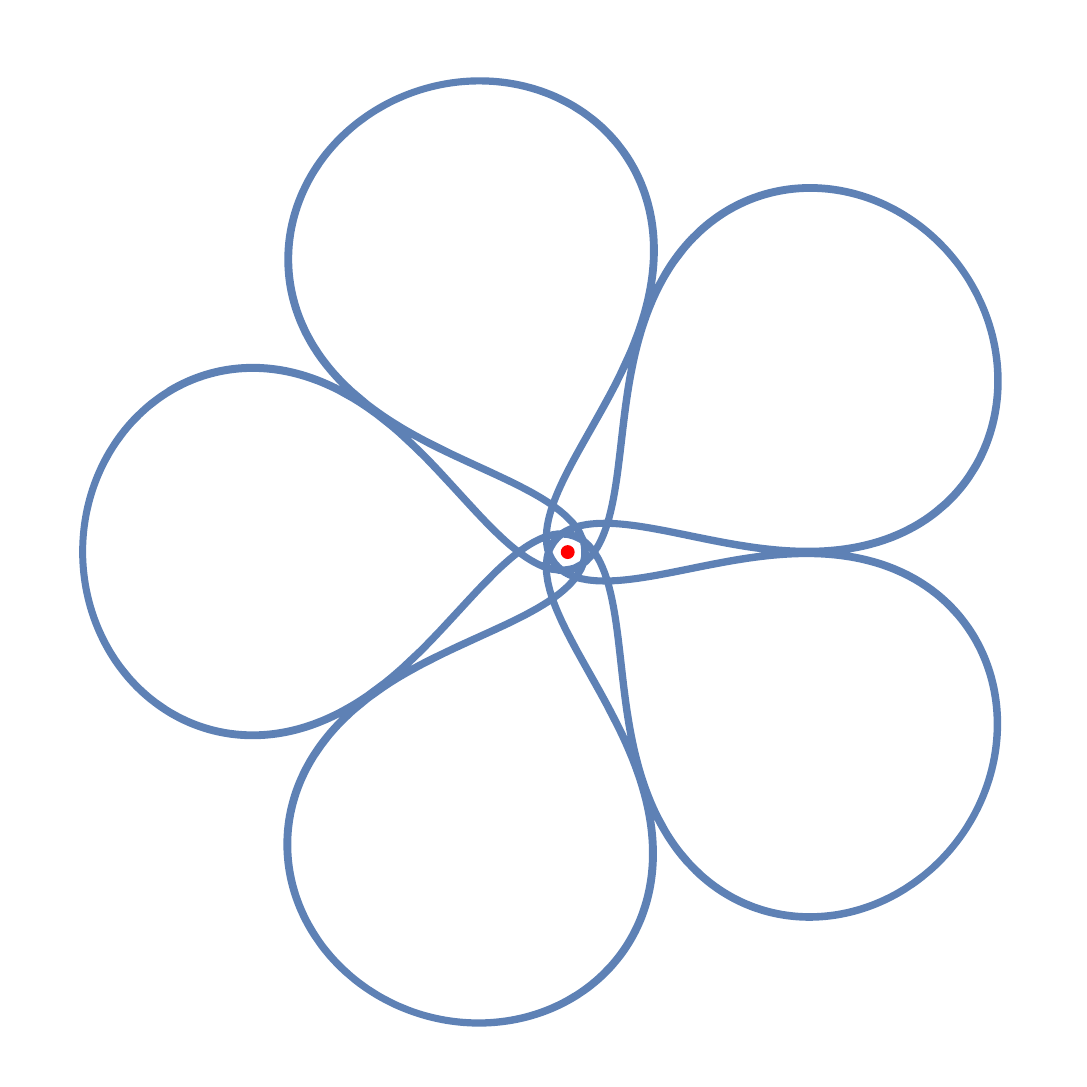}
  \caption{direct, $e  \approx 0.93$}
 \label{   }
\end{subfigure}
\begin{subfigure}{0.24\textwidth}
  \centering
  \includegraphics[width=0.95\linewidth]{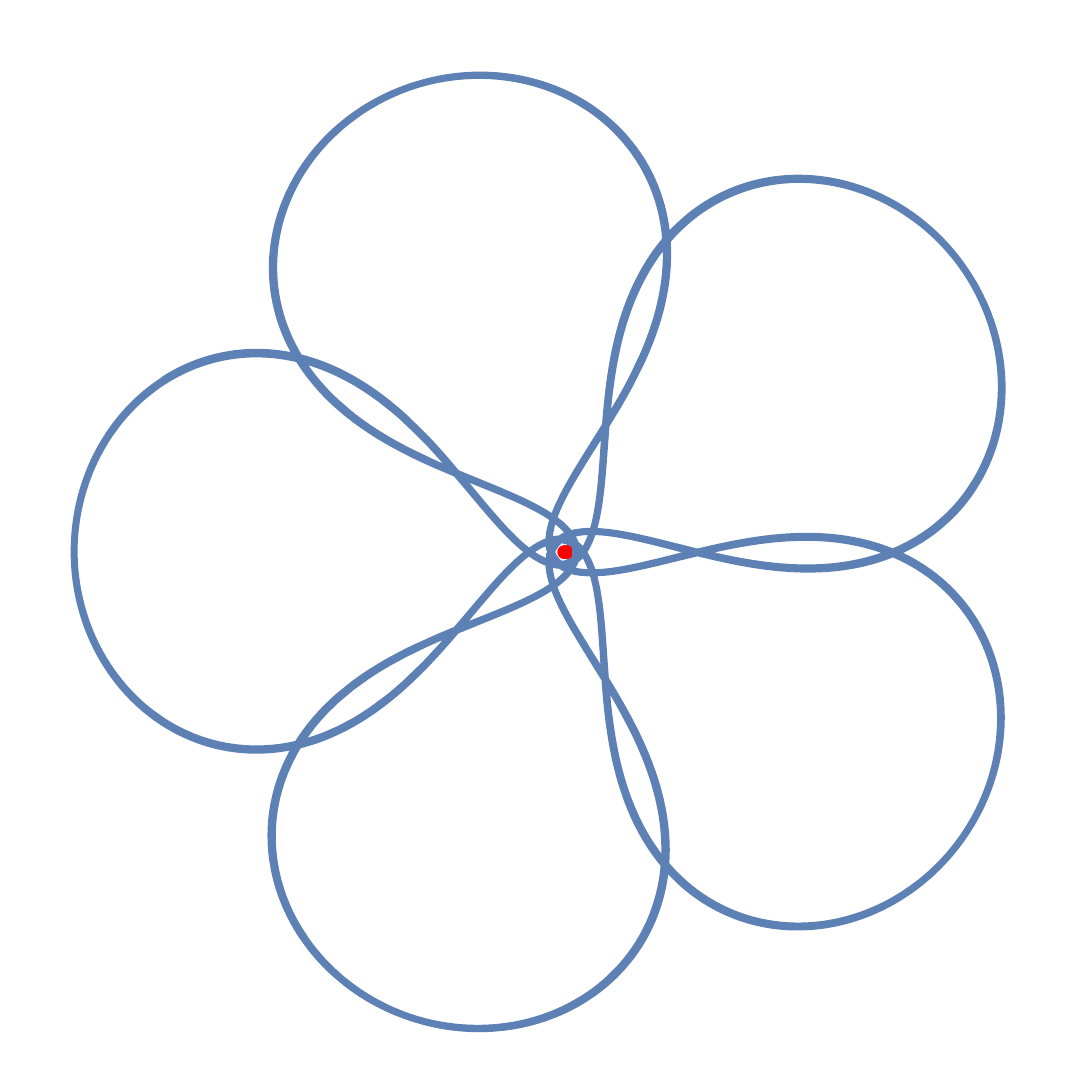}
  \caption{direct, $e= 0.95$}
 \label{   }
\end{subfigure}
\begin{subfigure}{0.24\textwidth}
  \centering
  \includegraphics[width=0.95\linewidth]{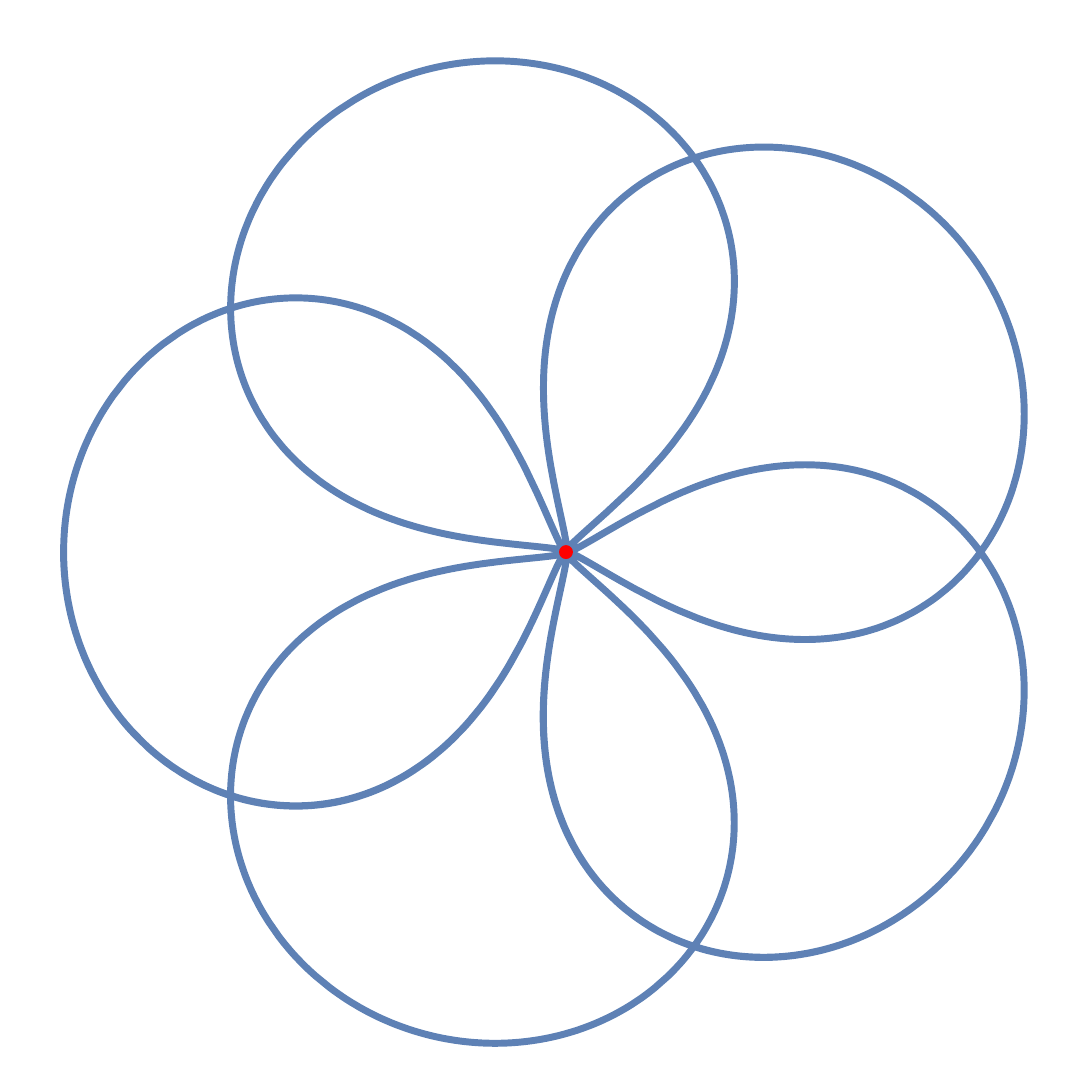}
  \caption{A collision orbit}
 \label{   }
\end{subfigure}
\begin{subfigure}{0.24\textwidth}
  \centering
  \includegraphics[width=0.95\linewidth]{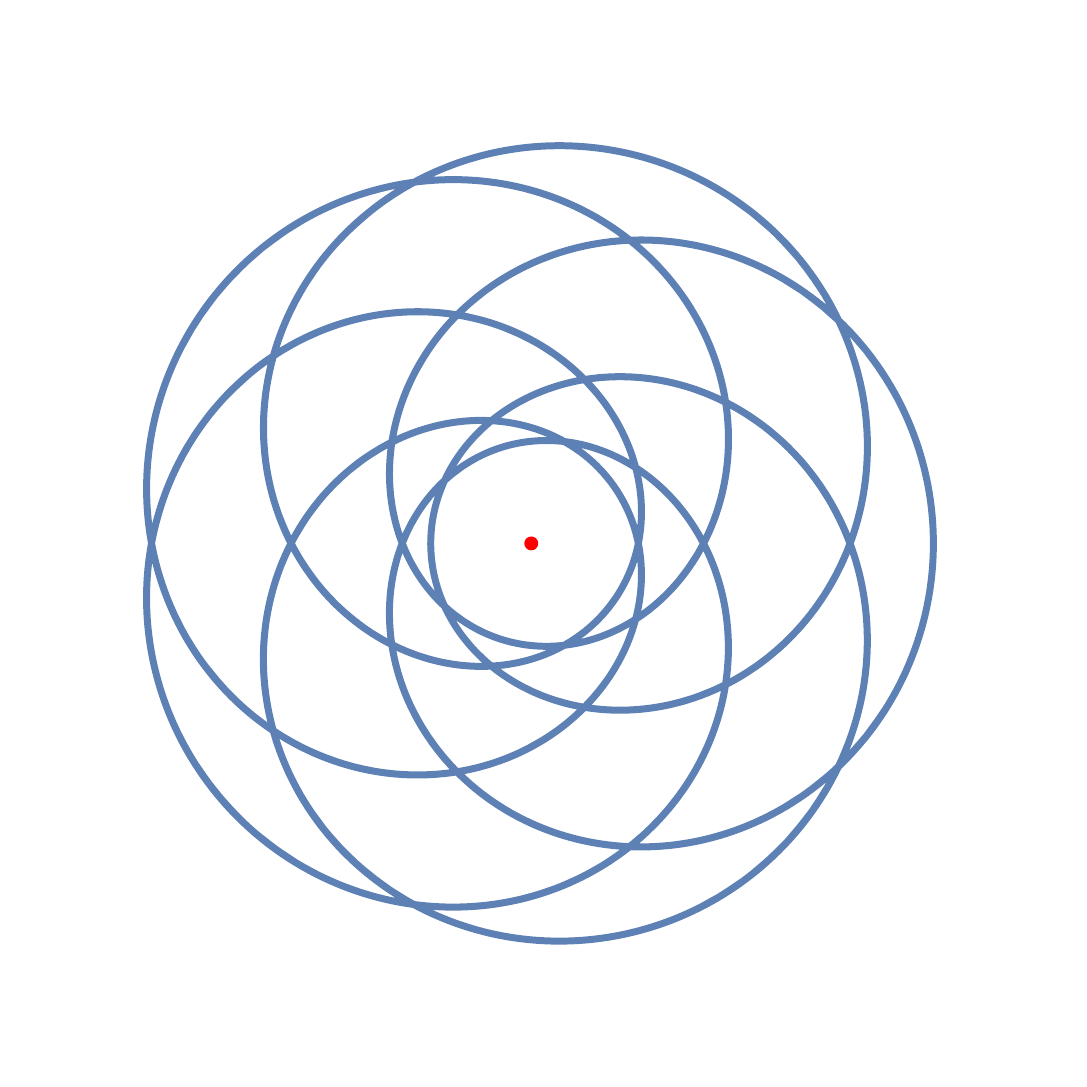}
  \caption{retrograde, $e=  0.6$}
 \label{   }
\end{subfigure}
\begin{subfigure}{0.24\textwidth}
  \centering
  \includegraphics[width=0.95\linewidth]{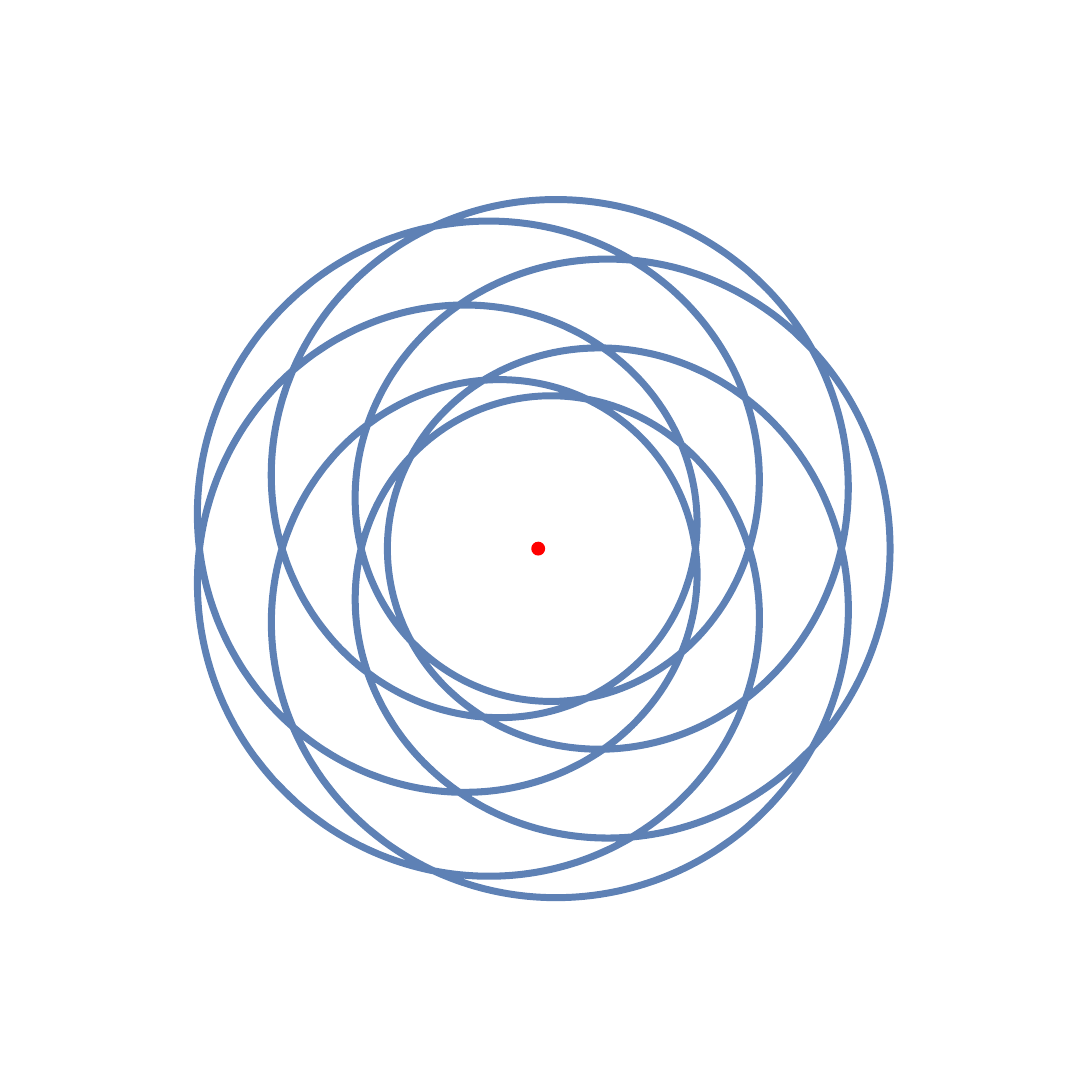}
  \caption{retrograde, $e=0.4$}
 \label{   }
\end{subfigure}
\begin{subfigure}{0.24\textwidth}
  \centering
  \includegraphics[width=0.95\linewidth]{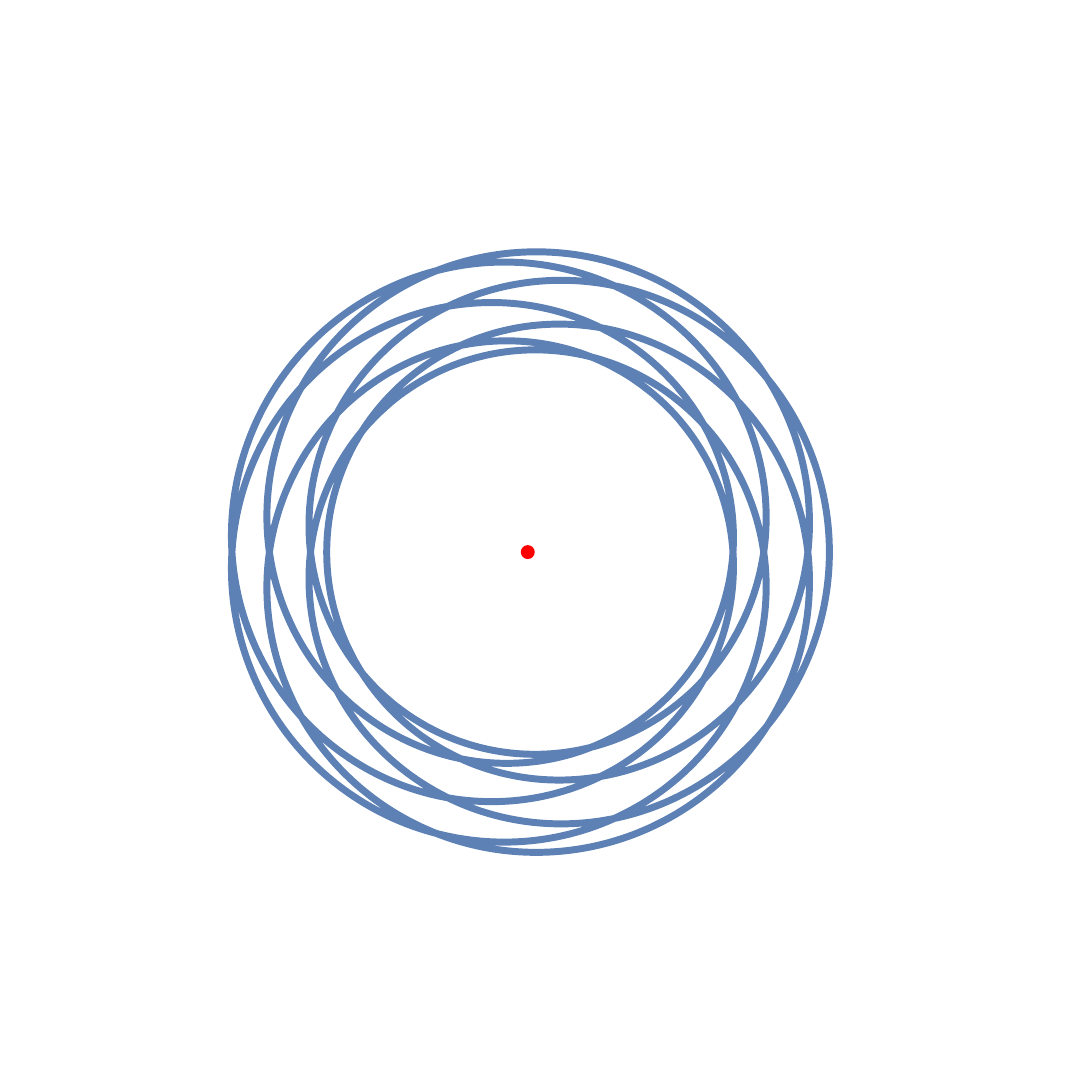}
  \caption{retrograde, $e=  0.2$}
 \label{   }
\end{subfigure}
\begin{subfigure}{0.24\textwidth}
  \centering
  \includegraphics[width=0.95\linewidth]{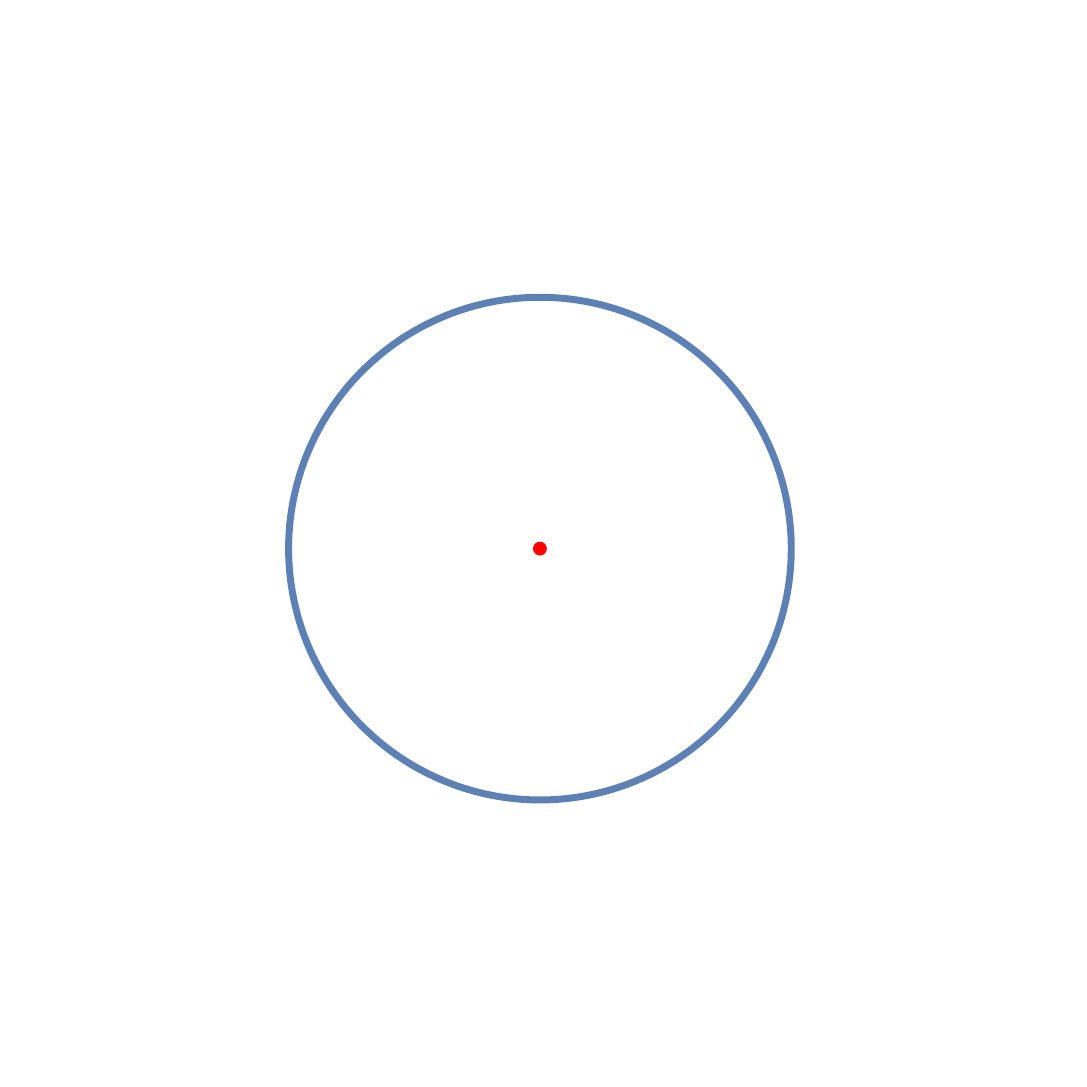}
  \caption{A $5+2=7$-fold retrograde circular orbit}
 \label{   }
\end{subfigure}
\caption{ The $e$-homotopy of the $T_{5,2}$-torus family.  The dashed circles are the boundaries of the bounded components of the Hill's regions. In (e)-(l) no boundaries are indicated since the associated energies are bigger than the critical Jacobi energy $c_J  = -3/2$.    In (c) the direct $T_{5,2}$-type orbit has cusps at the boundary of the Hill's region and then exterior loops appear in (d). Inverse self-tangencies are illustrated in (f). In (h) cusps at the origin happen and loops around the origin appear in (i). }
\label{homotopyofrkp}
\end{figure}

\begin{figure}[t]
\begin{subfigure}{0.24\textwidth}
  \centering
  \includegraphics[width=0.95\linewidth]{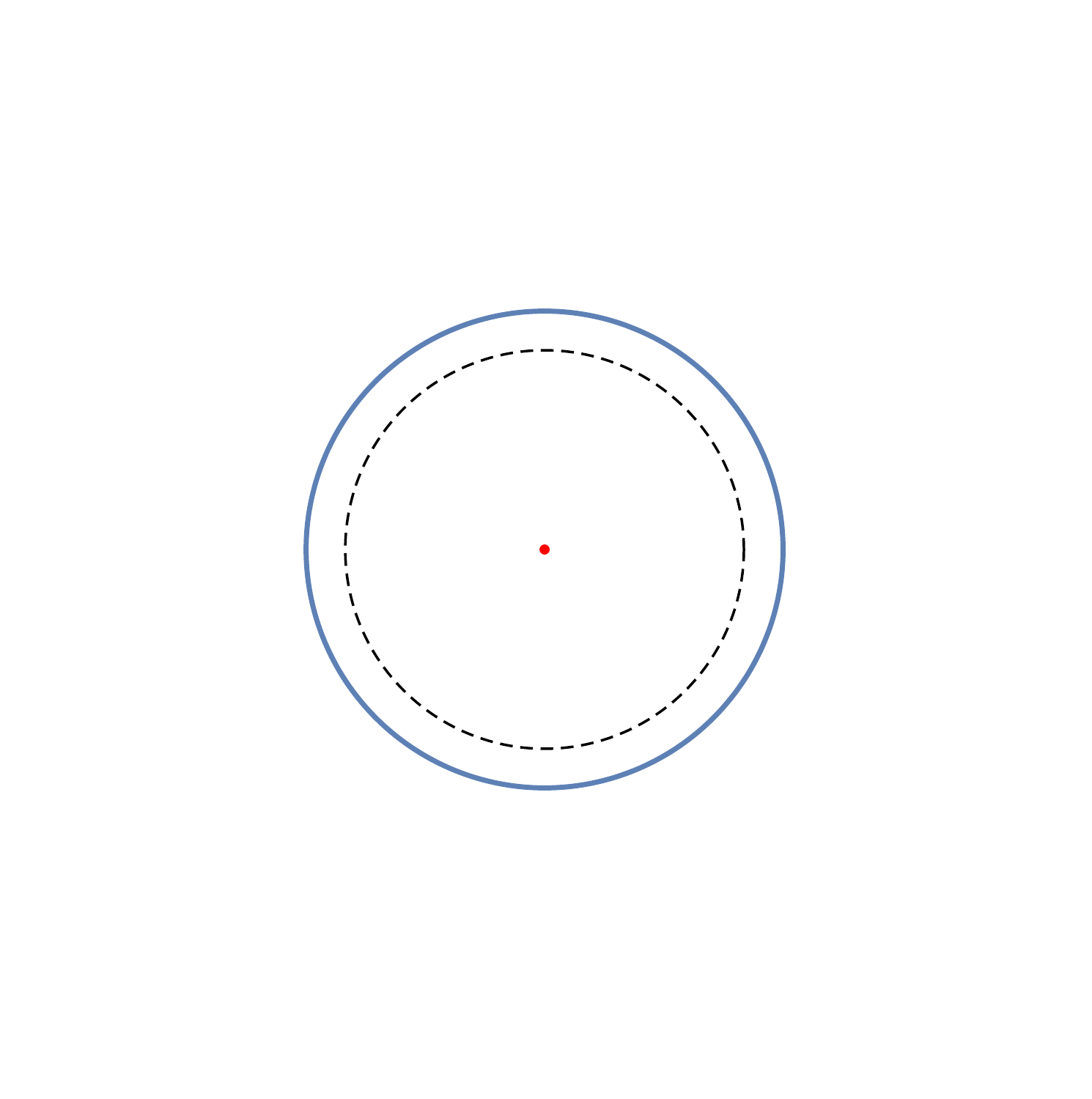}
  \caption{ A $5-3=2$-fold direct circular orbit}
  \label{  }
\end{subfigure}
\begin{subfigure}{0.24\textwidth}
  \centering
  \includegraphics[width=0.95\linewidth]{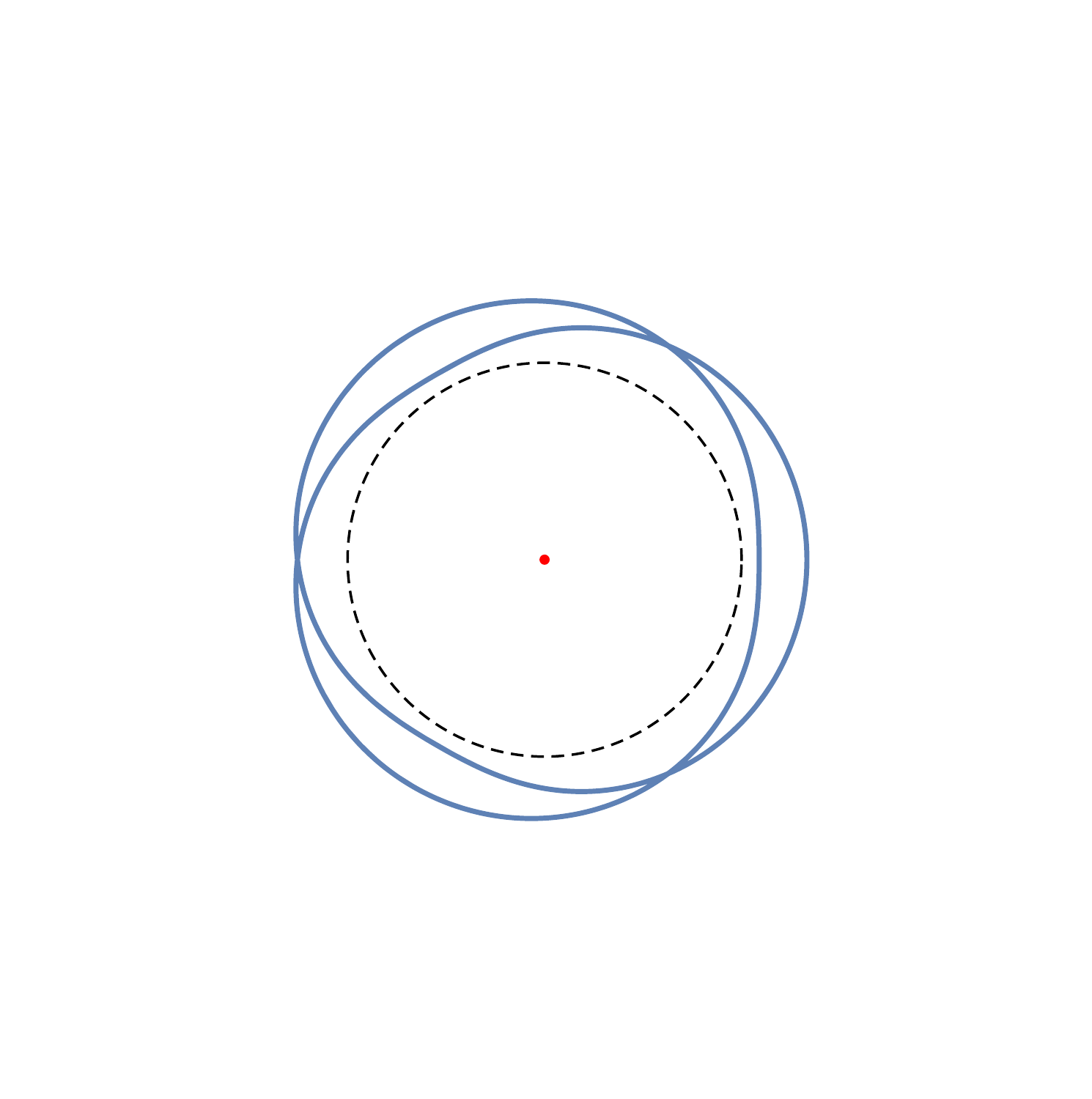}
  \caption{direct, $e=0.1$}
 \label{   }
\end{subfigure}
\begin{subfigure}{0.243\textwidth}
  \centering
  \includegraphics[width=0.95\linewidth]{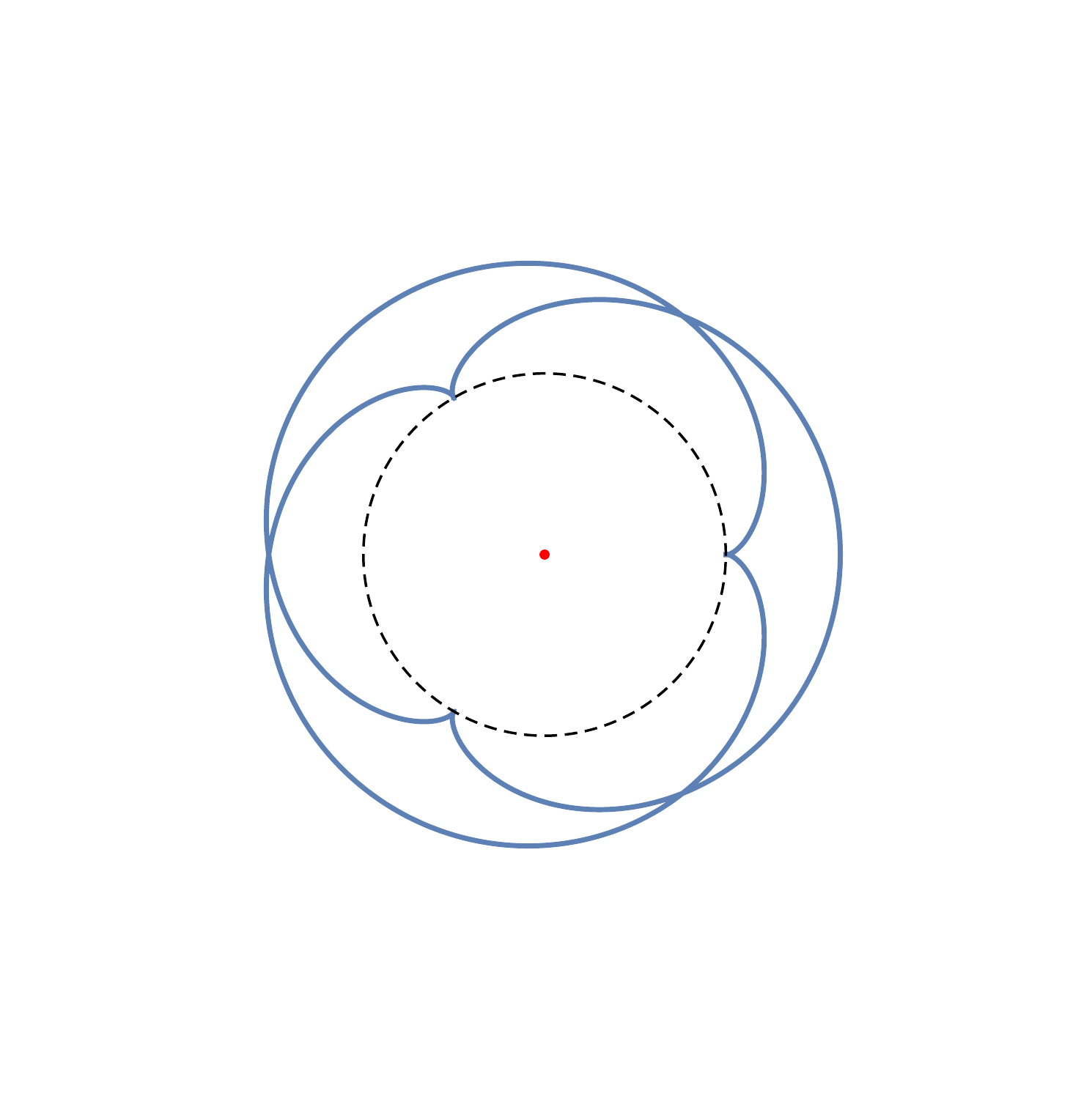}
  \caption{direct, $e=e_{3,5}^{-\infty} \approx 0.24$}
 \label{   }
\end{subfigure}
\begin{subfigure}{0.237\textwidth}
  \centering
  \includegraphics[width=0.95\linewidth]{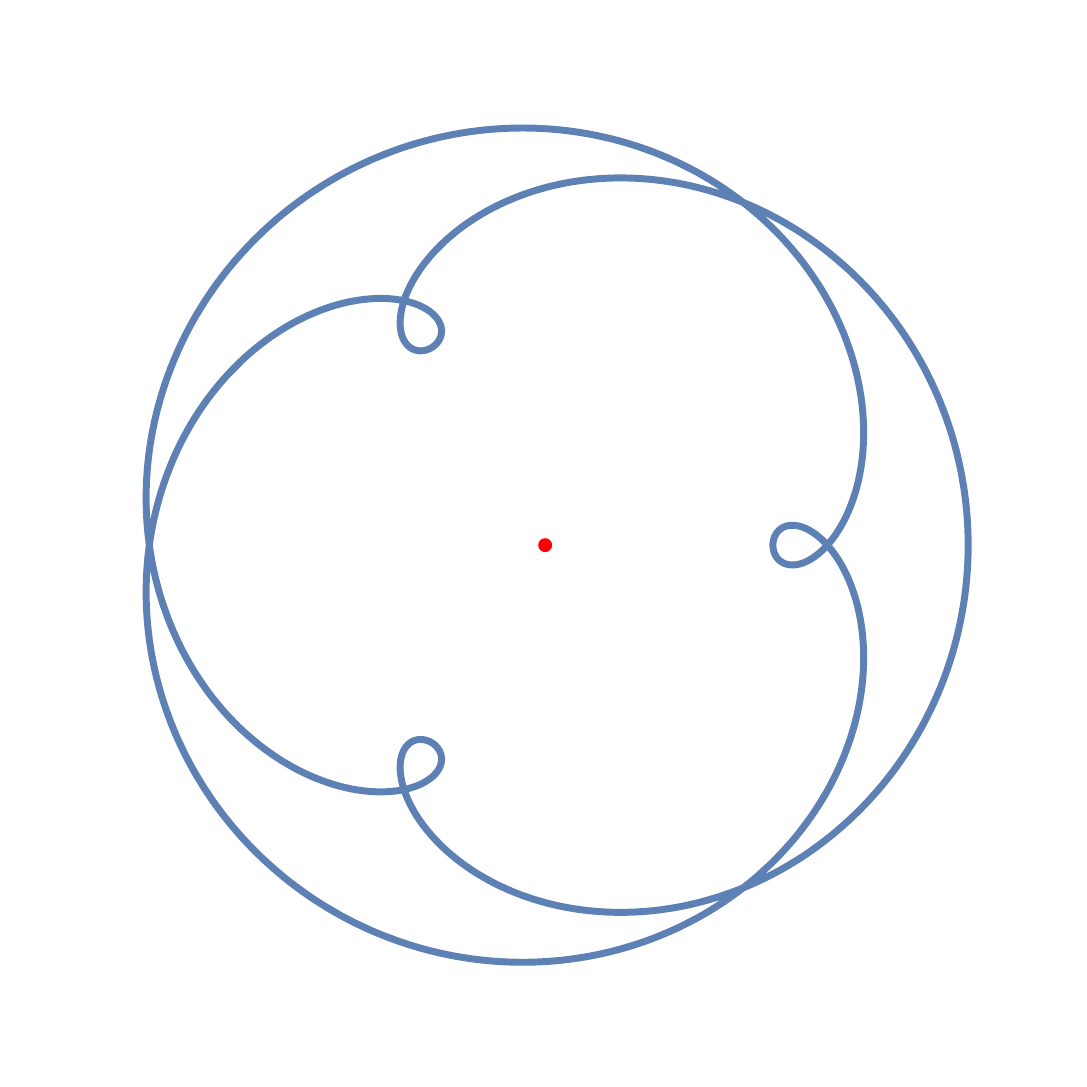}
  \caption{direct, $e=0.3$}
 \label{   }
\end{subfigure}
\begin{subfigure}{0.24\textwidth}
  \centering
  \includegraphics[width=0.95\linewidth]{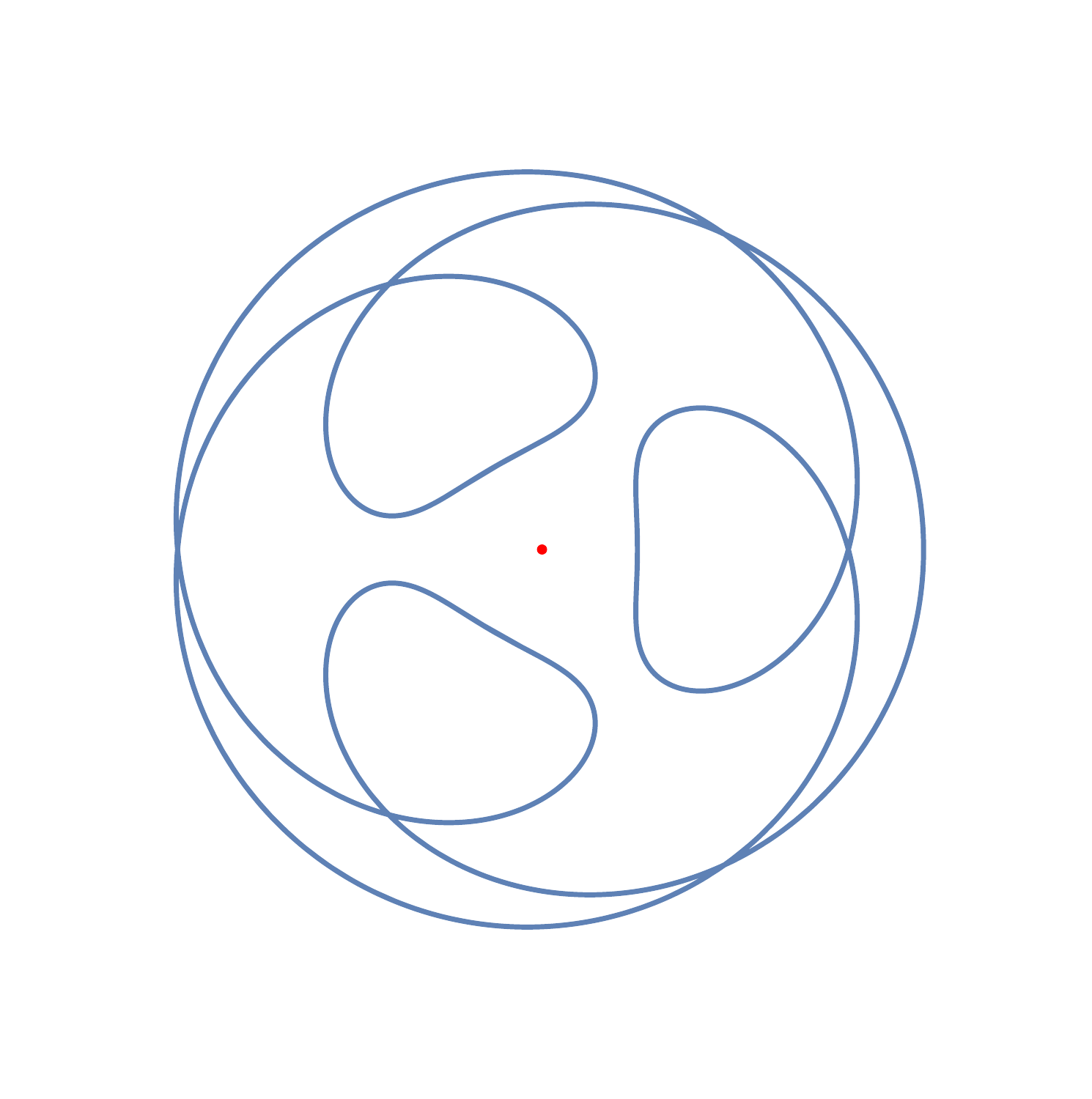}
  \caption{direct, $e=0.6$}
 \label{   }
\end{subfigure}
\begin{subfigure}{0.24\textwidth}
  \centering
  \includegraphics[width=0.95\linewidth]{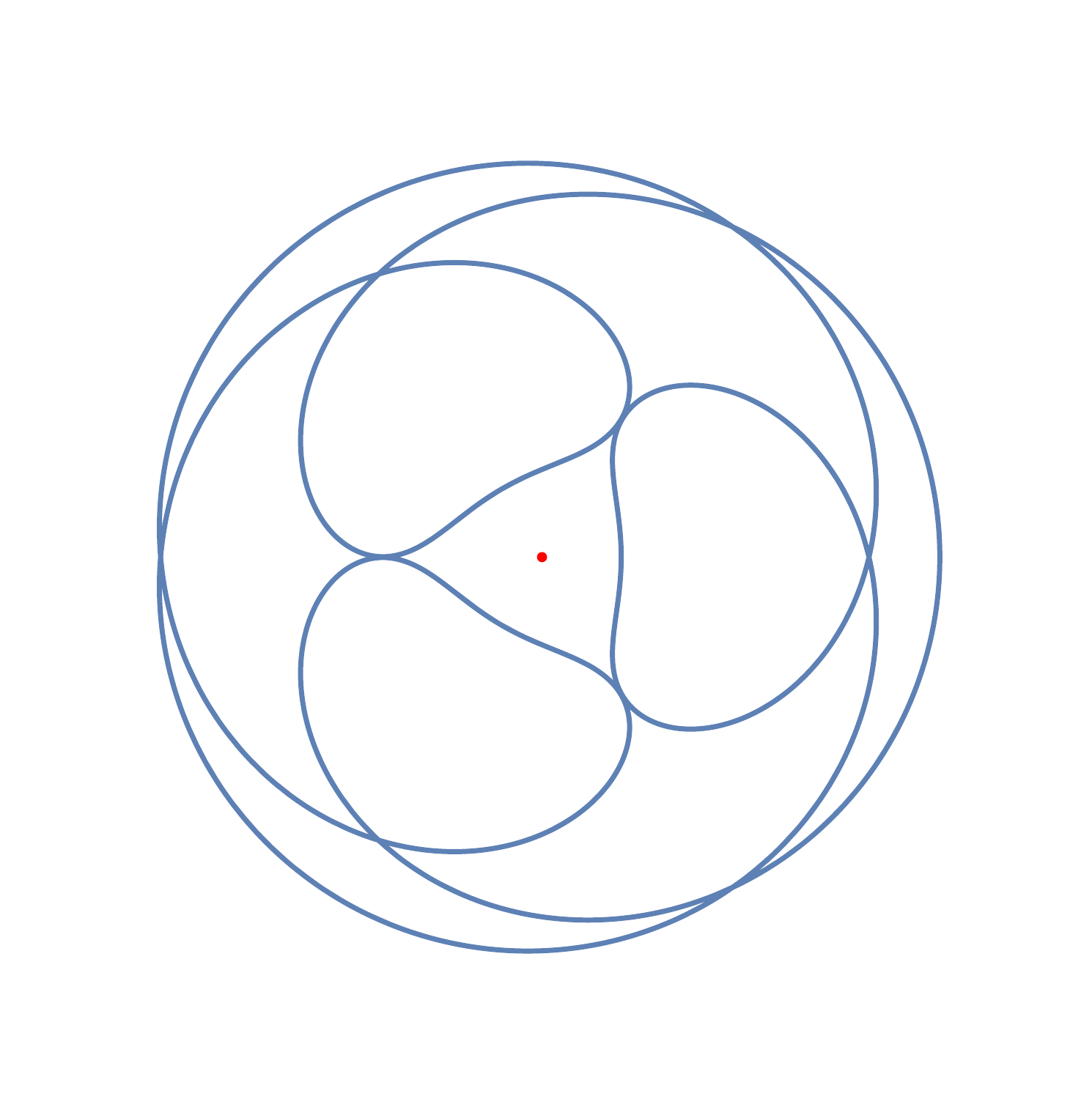}
  \caption{direct, $e    \approx 0.668$}
 \label{   }
\end{subfigure}
\begin{subfigure}{0.24\textwidth}
  \centering
  \includegraphics[width=0.95\linewidth]{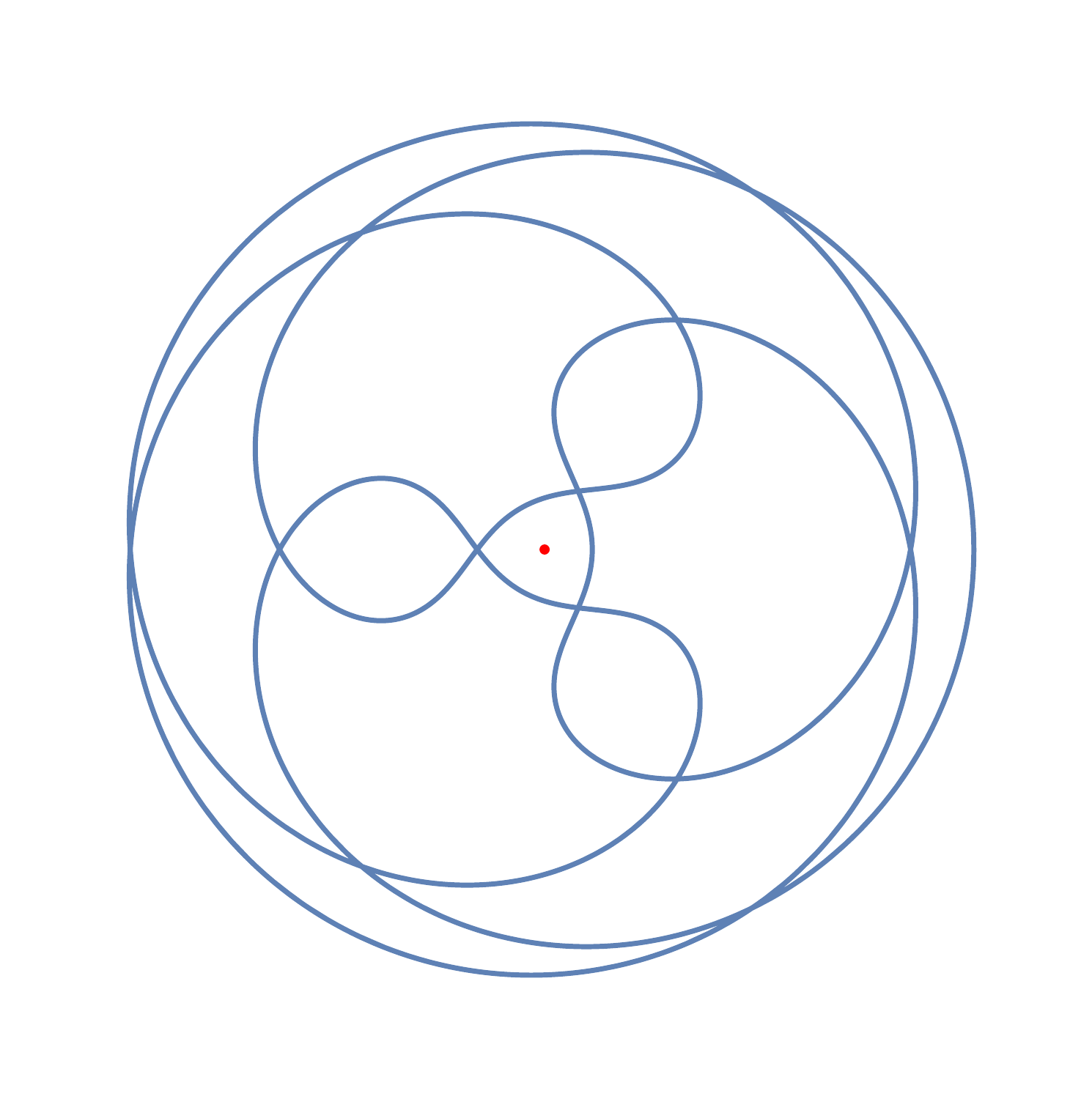}
  \caption{direct, $e=0.8$}
 \label{   }
\end{subfigure}
\begin{subfigure}{0.24\textwidth}
  \centering
  \includegraphics[width=0.95\linewidth]{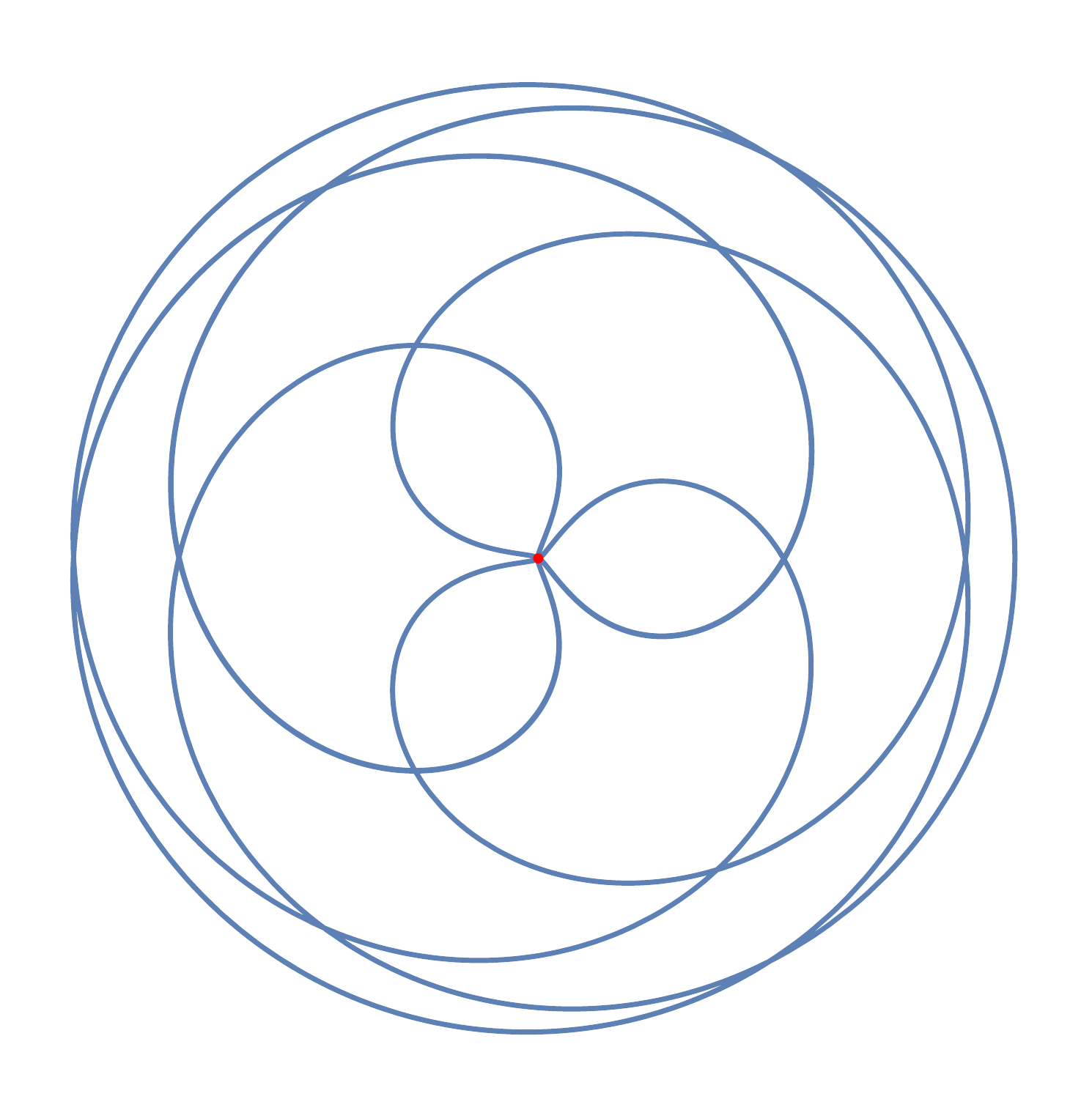}
  \caption{A collision orbit}
 \label{   }
\end{subfigure}
\begin{subfigure}{0.24\textwidth}
  \centering
  \includegraphics[width=0.95\linewidth]{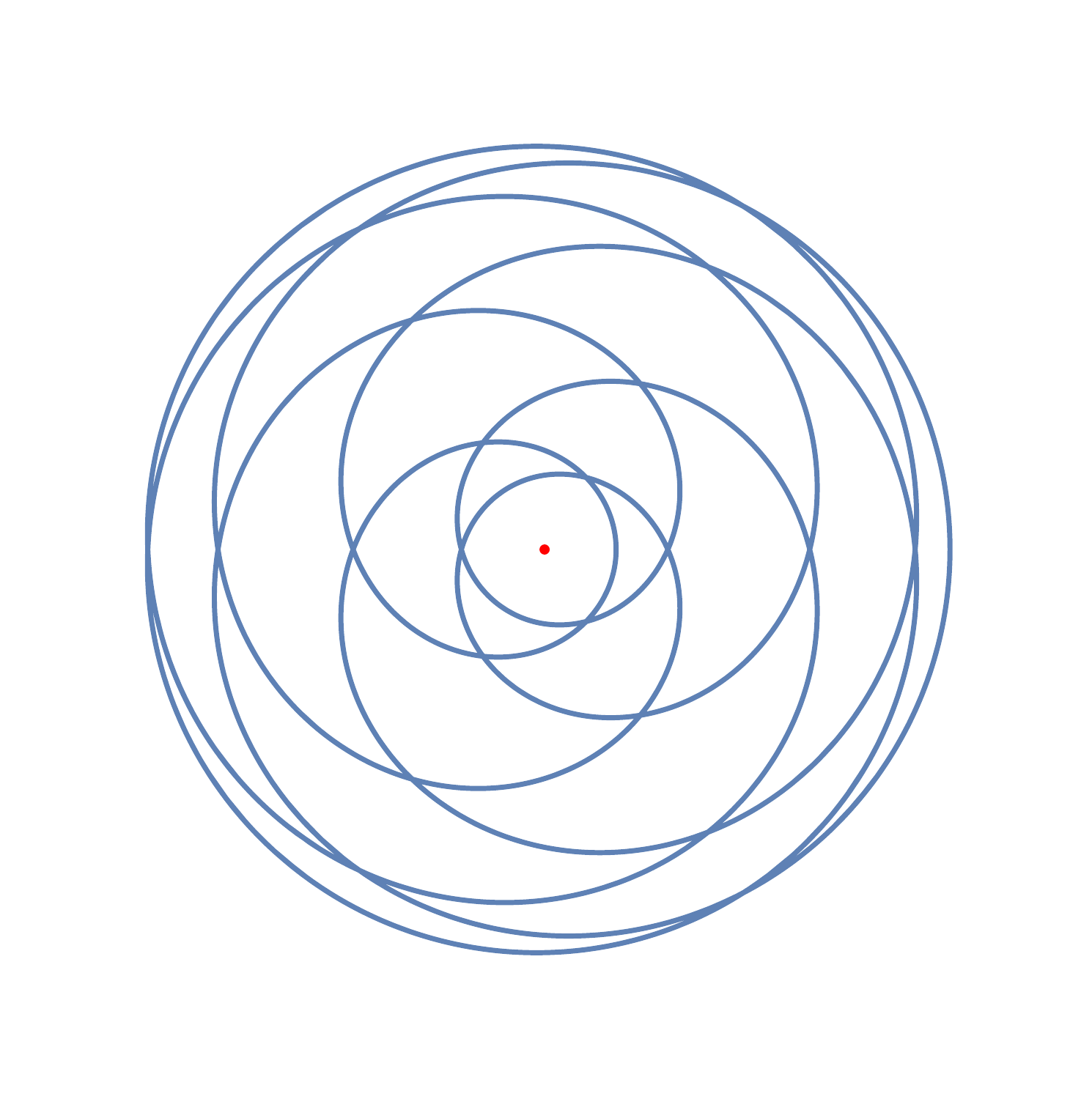}
  \caption{retrograde, $e=  0.7$}
 \label{   }
\end{subfigure}
\begin{subfigure}{0.24\textwidth}
  \centering
  \includegraphics[width=0.95\linewidth]{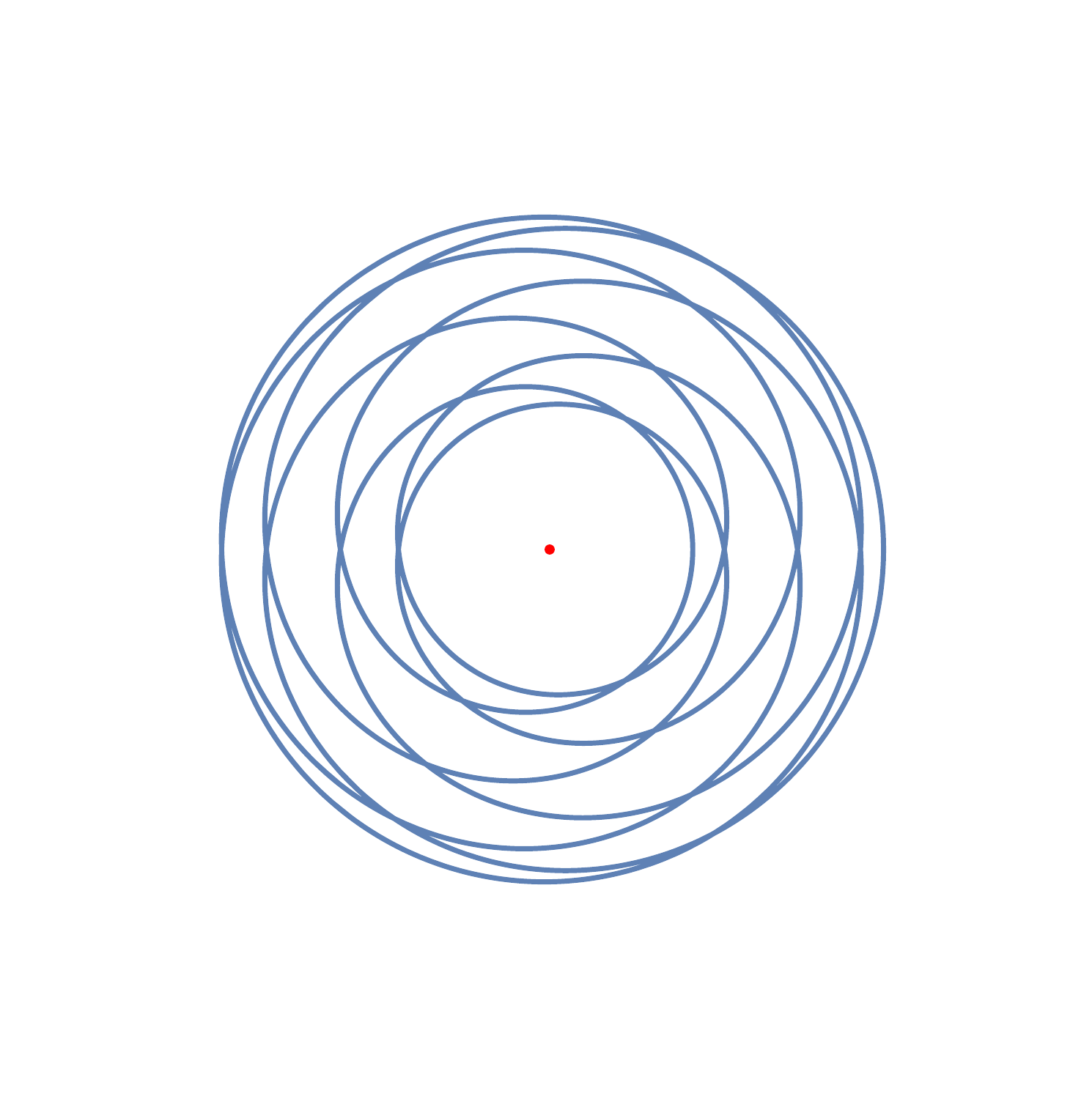}
  \caption{retrograde, $e=0.4$}
 \label{   }
\end{subfigure}
\begin{subfigure}{0.24\textwidth}
  \centering
  \includegraphics[width=0.95\linewidth]{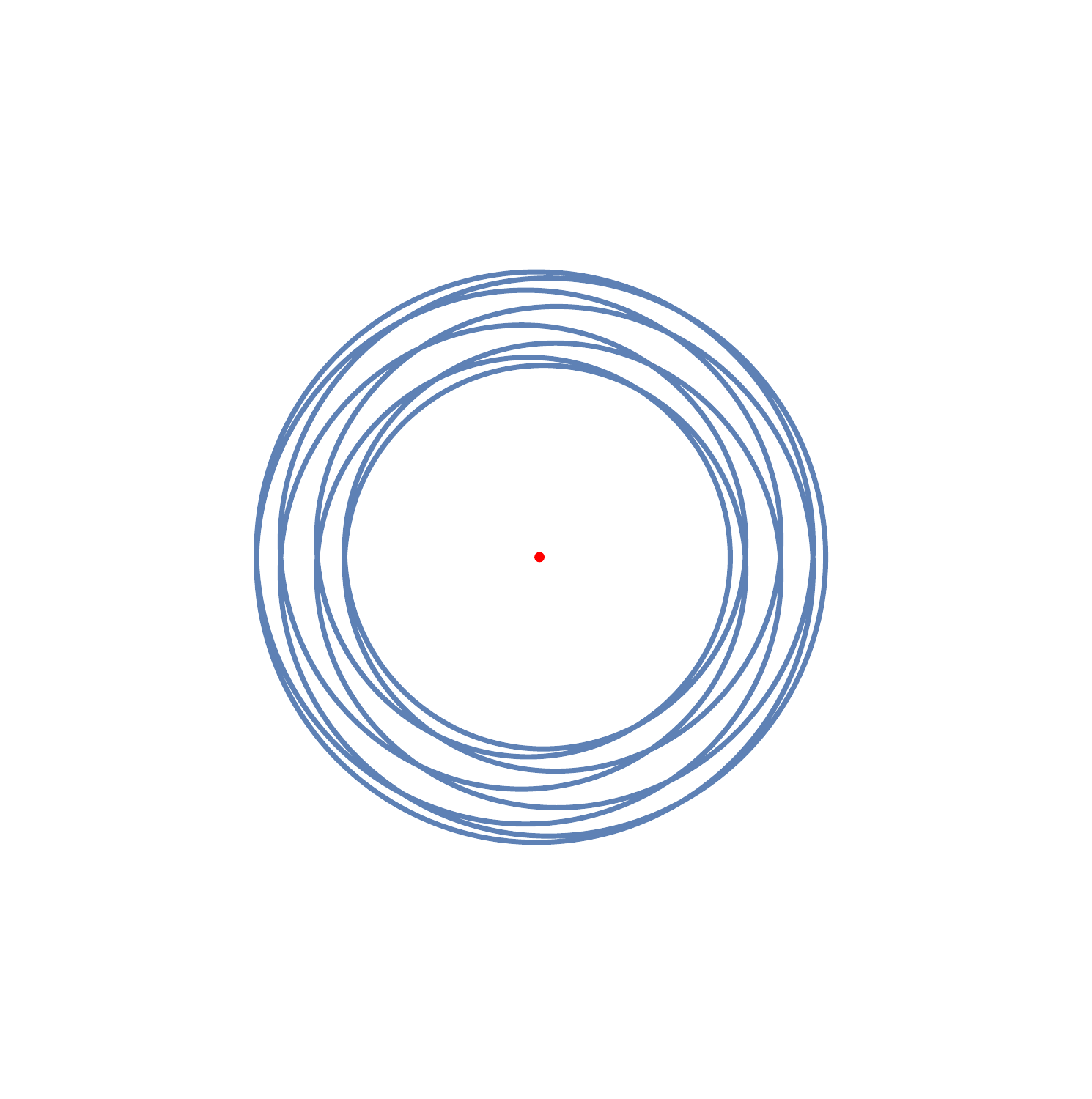}
  \caption{retrograde, $e=  0.2$}
 \label{   }
\end{subfigure}
\begin{subfigure}{0.24\textwidth}
  \centering
  \includegraphics[width=0.95\linewidth]{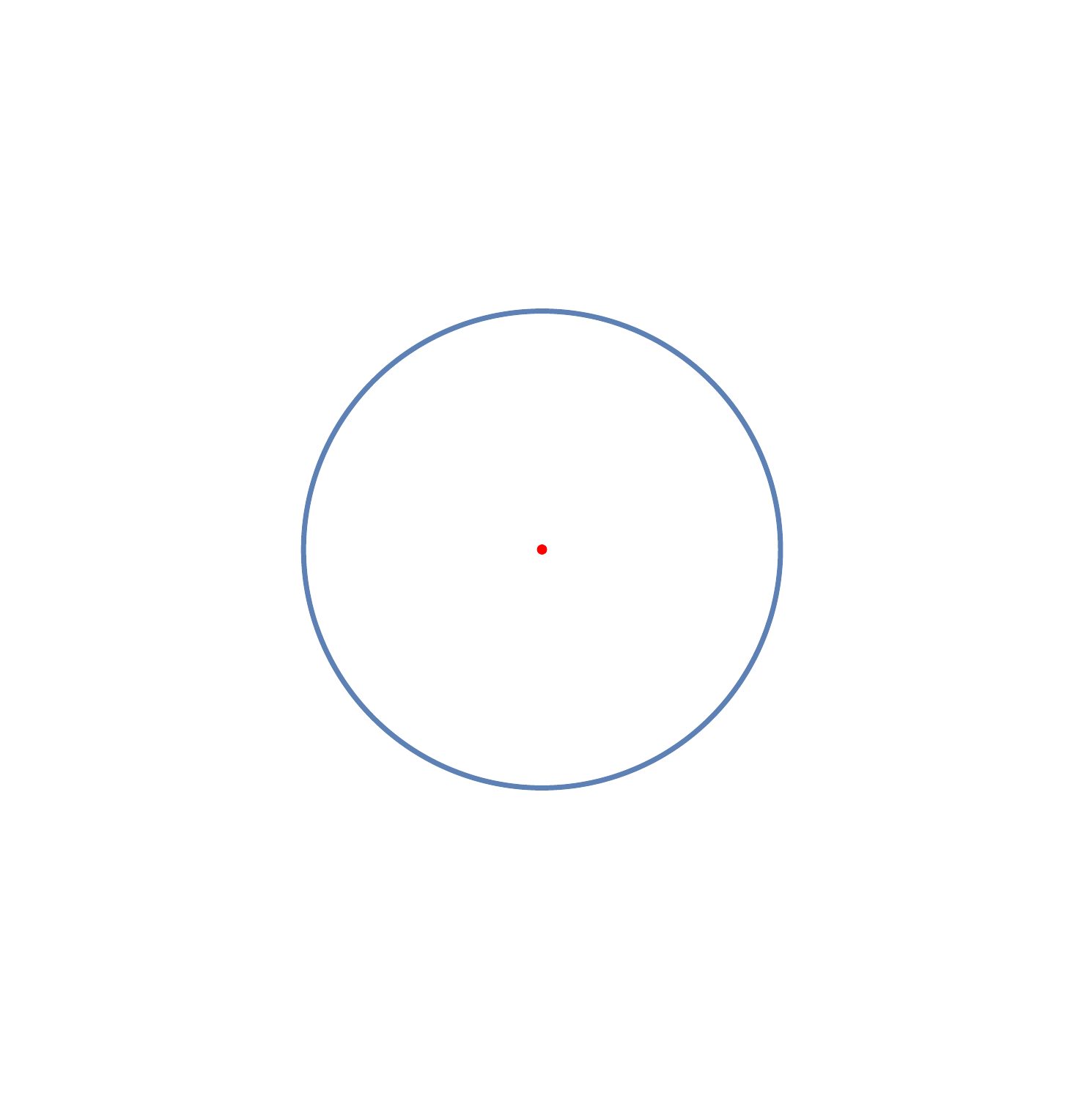}
  \caption{A $5+3=8$-fold retrograde circular orbit}
 \label{   }
\end{subfigure}
\caption{ The $e$-homotopy of the $T_{3,5}$-torus family.  The dashed circles are the boundaries of the unbounded components of the Hill's regions. In (d)-(l) no boundaries are indicated since the associated energies are bigger than the critical Jacobi energy $c_J  = -3/2$.    In (c) the direct $T_{3,5}$-type orbit has cusps at the boundary of the Hill's region and then interior loops appear in (d). Inverse self-tangencies are illustrated in (f). In (h) cusps at the origin happen and loops around the origin appear in (i). }
\label{homotopyofrkp2}
\end{figure}

 \subsection{Trajectories of $T_{k,l}$-type orbits}\label{sdflsdhfeg} Without loss of generality we may assume that the perihelion of $\gamma$ lies at the positive $q_1$-axis. We parametrize it so that the perihelion is the starting point. Then by Lemma \ref{rotsyme}, since $k$ and $l$ are relatively prime, the set of  arguments of the perihelions of $\alpha$ is given by
\begin{equation} \label{setwpik}
\left \{   0, \frac{2\pi  }{k}, \frac{4 \pi }{k}, \cdots, \frac{2(k-1)  \pi   }{k}        \right \}.
\end{equation}
Moreover, by (\ref{goodcompute}) the set of times at which the massless body is located at the perihelions of $\alpha$ is given by 
\begin{equation} \label{setwpik2}
\left \{   0, \frac{2\pi l  }{k}, \frac{4 \pi l }{k}, \cdots, \frac{2(k-1)  \pi  l  }{k}        \right \}.
\end{equation}

In order to argue with the aphelions, we observe that
\begin{equation*} 
\alpha(t+ T/2) = \exp( it + i T/2)\gamma(t+ T/2)  = \exp(   \pi i  l/k) \exp(it) \gamma(t+T/2) . 
\end{equation*}
Since $\text{arg}(\gamma(t) ) = \text{arg}(\gamma(t+T/2)) + \pi$, we see that
\begin{eqnarray*}
\text{arg}( \alpha(t+T/2)) &=& \text{arg} ( \exp(\pi i l /k) \exp( i t) \gamma( t + T/2) )\\
 &=& \pi l /k + \text{arg}(\alpha(t) ) + \pi\\
&=& \pi( k+l)/k + \text{arg}( \alpha(t) ).
\end{eqnarray*}
Therefore, the set of arguments  of the aphelions equals the set (\ref{setwpik}) if $k+l$ is even and equals 
\begin{equation} \label{sdfseglijset}
  \left \{   \frac{\pi   }{k}, \frac{3\pi   }{k}, \frac{5 \pi  }{k}, \cdots, \frac{(2 k-1) \pi  }{k}        \right \} 
\end{equation}
if $k+l$ is odd. Moreover,  the set of times  of the aphelions equals 
\begin{equation} \label{sdfseglijset}
  \left \{   \frac{\pi  l  }{k}, \frac{3\pi l   }{k}, \frac{5 \pi l  }{k}, \cdots, \frac{(2 k-1) \pi  l}{k}        \right \} .
\end{equation}
We summarize the results in the following.

\begin{Lemma}\label{lemmaargutime} Assume that the perihelion of $\gamma$ has the argument $\theta = \theta_0$ and we parametrize $\gamma$ so that the perihelion is achieved at $t =  0$. Then the following statements hold true:
\begin{enumerate}[label=(\roman*)]
\item the sets of the perihelions and aphelions of $\alpha$ are given by
\begin{equation*}
\left \{  \alpha(t) : t =    \frac{2j \pi l}{k}, \; j=0, 1,2, \cdots, k-1             \right \}
\end{equation*}
and
\begin{equation*}
\left \{  \alpha(t) : t =     \frac{(2j+1) \pi l}{k}, \; j=0, 1,2, \cdots, k-1             \right \},
\end{equation*}
respectively;    \vspace{2mm}

\item  assume that $k \pm l$ is even. Then the sets of arguments of the perihelions and aphelions of $\alpha$ are equal and given by 
\begin{equation}\label{argumentmaxnin}
\left \{  \theta : \theta = \theta_0 +   \frac{2j \pi  }{k}, \; j=0, 1,2, \cdots, k-1             \right \}.
\end{equation}
If $k \pm l$ is odd, then the set of arguments of the perihelions is given  by (\ref{argumentmaxnin}) and the set of arguments of the aphelions by 
\begin{equation} \label{sdfseglijset}
\left \{  \theta : \theta = \theta_0 +   \frac{(2j+1) \pi  }{k}, \; j=0, 1,2, \cdots, k-1             \right \}.
\end{equation}
\end{enumerate}
\end{Lemma}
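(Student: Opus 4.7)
The lemma is essentially a repackaging of the discussion immediately preceding it, so the plan is to derive both parts from two ingredients: the quasiperiodic covering relation $\alpha(t+T) = \exp(2\pi i l/k)\,\alpha(t)$ established in (\ref{goodcompute}), and the elementary fact that the perihelion and aphelion of the Kepler ellipse $\gamma$ occur at times $jT$ and $(j+\tfrac12)T$ with arguments $\theta_0$ and $\theta_0+\pi$, respectively. Because the rotation $\exp(it)$ preserves distance from the origin, the perihelions and aphelions of $\alpha$ occur at exactly these times, and each set has cardinality $k$ since $kT = 2\pi l$ closes the orbit.

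For part (i), I would simply substitute $T = 2\pi l/k$ into the time sets $\{jT : 0 \le j \le k-1\}$ and $\{(j+\tfrac12)T : 0 \le j \le k-1\}$, which immediately yields the two displayed sets of perihelion and aphelion times.

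For part (ii), I would compute $\alpha(jT) = \exp(2\pi i j l/k)\,\gamma(0)$, whose argument is $\theta_0 + 2\pi j l/k \pmod{2\pi}$; since $\gcd(k,l)=1$, the map $j \mapsto jl \bmod k$ is a permutation of $\{0,1,\dots,k-1\}$, so the perihelion argument set collapses to (\ref{argumentmaxnin}). The aphelion computation is analogous: $\alpha((j+\tfrac12)T) = \exp(i\pi l/k)\exp(2\pi i j l/k)\,\gamma(T/2)$, and using $\arg\gamma(T/2) = \theta_0 + \pi$ its argument equals $\theta_0 + \pi(k+l)/k + 2\pi j l /k \pmod{2\pi}$. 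The residual offset $\pi(k+l)/k$ is an integer multiple of $2\pi/k$ precisely when $k+l$ is even, in which case the aphelion argument set coincides with the perihelion set; otherwise it is $2\pi/k$ times a half-integer, shifting the set by $\pi/k$ and producing (\ref{sdfseglijset}).

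The only substantive point is the coprimality reduction that identifies $\{2\pi jl/k \bmod 2\pi\}_{j=0}^{k-1}$ with $\{2\pi j/k\}_{j=0}^{k-1}$; the remaining work is purely arithmetic and is already carried out in the paragraphs preceding the lemma, so the proof reduces to citing those computations and handling the parity case split cleanly.
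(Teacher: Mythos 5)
Your proposal is correct and takes essentially the same route as the paper: the paper's ``proof'' is exactly the discussion preceding the lemma, which combines the covering relation $\alpha(t+T)=\exp(2\pi i l/k)\,\alpha(t)$ from (\ref{goodcompute}) with the coprimality permutation $j\mapsto jl \bmod k$ and the half-period computation $\arg\alpha(t+T/2)=\arg\alpha(t)+\pi(k+l)/k$, followed by the same parity case split. No gaps.
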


In what follows, we assume that the perihelion of $\gamma$ lies on the positive $q_1$-axis and is achieved at $ t=0$. In view of the rotational and reflection symmetries, to draw the trajectory of a $T_{k,l}$-type orbit $\alpha$ it   suffices to consider the part of the orbit which is contained in the sector
\begin{equation*}
\left \{q \in \C : \arg(q) \in [0, \pi/k] \right \}.
\end{equation*}

Note that the radius of points on $\gamma$ varies in $[r_{\min}, r_{\max}]$, where $r_{\min}$ and $r_{\max}$ are defined as in (\ref{eqrmnsridncx}). Since we have assumed that the perihelion $\gamma(0)$ lies on the positive $q_1$-axis,  for  each $r \neq r_{\min}, r_{\max}$, there exist precisely two points $\gamma(\pm t_r )$ such that $|\gamma(t_r) | = |\gamma(-t_r)| = r$. Therefore, since $\alpha$ is obtained from the $k$-fold covering of  $\gamma$, for each $r \in (r_{\min}, r_{\max})$, there exist precisely $2k$ (possibly with intersections) points 
\begin{equation}\label{twosets}
\left \{ \alpha(t_r + \frac{ 2 \pi j l}{k} ) :  j = 0, 1, 2, \cdots, k-1 \right \} \cup \left \{   \alpha( -t_r + \frac{ 2 \pi j l}{k} ) :j = 0, 1, 2, \cdots, k-1 \right \}
\end{equation}
of radius $r$ on $\alpha$.  If there exists an intersection between those points, i.e., $\alpha( t_r + 2 \pi m l /k) = \alpha ( -t_r + 2 \pi n l/k)$ for some $n,m$, then by the rotational symmetry the two sets in (\ref{twosets}) are identical, namely there exist $k$ double points of radius $r$.

We have fixed the radius $r$ and see how many points of $\alpha$ lies on the circle of radius $r$. In the following lemma we fix the angle  $\theta_0$ and examine points of $\alpha$ which lies on the ray $\theta = \theta_0$.

\begin{Lemma}\label{8h98g97gds}  Let $\alpha$ be a direct $T_{k,l}$-type orbit of eccentricity $e < e_{k,l}^{\infty}$. For each $ \theta_0 \in [0, 2 \pi)$, there exist precisely $|k-l|$ points (possibly with multiple points) of $\alpha$ on the ray $\theta = \theta_0$. If $\alpha$ is retrograde, then there exist $(k+l)$ points (possibly with multiple points) having the same property.
\end{Lemma}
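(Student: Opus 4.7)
My approach is to exploit monotonicity of the angular coordinate $\theta$ along $\alpha$ and then count intersections with the ray $\theta = \theta_0$ by dividing the total angular change over one period by $2\pi$. Explicitly, from \eqref{transvec} we have along $\alpha$ the relation
\begin{equation*}
\dot\theta = \frac{L}{r^2} + 1,
\end{equation*}
so the sign of $\dot\theta$ is governed by comparison of $r$ with $r_{\mathrm{inv}} := \sqrt{-L}$, and $\dot\theta$ is automatically strictly positive when $L > 0$. Intersections of $\alpha$ with the ray $\theta = \theta_0$ correspond to times $t \in [0, 2\pi l)$ with $\theta_\alpha(t) \equiv \theta_0 \pmod{2\pi}$; once $\theta_\alpha$ is shown to be strictly monotone, these are counted with multiplicity by $|\Delta \theta_\alpha|/(2\pi)$, which by the winding number computation in Section \ref{sec:bif} equals $k+l$ in the retrograde case and $|k-l|$ in the direct case.

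For the retrograde case, the discussion in Section \ref{sec:bif} gives $L = \sqrt{(1-e^2)/(-2E_{k,l})} > 0$, so $\dot\theta > 0$ uniformly on $\alpha$. Hence $\theta_\alpha$ is strictly increasing on $[0, 2\pi l]$ with total change $2\pi(k+l)$, and we immediately obtain $k+l$ solutions for each $\theta_0$.

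For the direct case I must verify that $\dot\theta$ retains constant sign. In the subcase $k > l$, I claim that $r_{\max} < \sqrt{-L}$ for every $e < e_{k,l}^\infty$, which forces $\dot\theta < 0$ on all of $\alpha$. Substituting $r_{\max} = (1+e)/(-2E_{k,l})$ from \eqref{eqrmnsridncx} and $L^2 = (1-e^2)/(-2E_{k,l})$ from \eqref{eccentrieq}, the inequality $r_{\max}^4 < L^2$ is equivalent to $8(1-e)E_{k,l}^3 + (1+e)^3 < 0$; the unique zero of this polynomial in $(0,1)$ is by definition $e_{k,l}^\infty$ (see Section \ref{sdfsdgsesd33}), and at $e = 0$ the left hand side equals $1-(k/l)^2 < 0$, so the inequality propagates over the whole interval $[0, e_{k,l}^\infty)$. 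A symmetric argument using $r_{\min}$ in place of $r_{\max}$ treats the subcase $k < l$ (with the analogous threshold $e_{k,l}^{-\infty}$), yielding $\dot\theta > 0$ uniformly. In either subcase $\theta_\alpha$ is strictly monotone with total angular change of absolute value $2\pi|l-k|$, giving the claimed $|k-l|$ intersections.

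The parenthetical ``possibly with multiple points'' accounts for the fact that distinct parameter times $t_1 \neq t_2$ with $\alpha(t_1) = \alpha(t_2)$ may contribute two branches meeting the ray at a single geometric point; the count above is of parameter values (equivalently, of branches of $\alpha$ crossing the ray), not of distinct points in the plane. The main obstacle is the sign control of $\dot\theta$ in the direct case, but this is rendered essentially tautological once one observes that $e_{k,l}^\infty$ (respectively $e_{k,l}^{-\infty}$) is by construction the first eccentricity at which $\dot\theta = 0$ becomes attainable somewhere on the orbit.
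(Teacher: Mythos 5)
Your proposal is correct and follows essentially the same route as the paper: control the sign of $\dot\theta = L/r^2+1$ using the threshold eccentricity $e_{k,l}^{\pm\infty}$ (the paper delegates this to the computations of Section \ref{disatser}, which you reproduce explicitly via $r_{\max}^4<L^2 \Leftrightarrow 8(1-e)E_{k,l}^3+(1+e)^3<0$), and then count ray crossings with the winding number $l-k$ or $k+l$. The only difference is that you spell out the sign verification that the paper handles by citation.
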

\begin{proof}  Since $e<e_{k,l}^{\infty}$, in view of the argument in Section \ref{disatser}, the angular velocity does not change the sign. Then the assertions follow from the fact that the winding number of direct or retrograde $T_{k,l}$-type orbits equals $l-k$ and $k+l$, respectively.
\end{proof}

We now determine on which rays double points lie. 
 
\begin{Lemma}\label{corollarypositionofdouble} Every double point of $\alpha$ has  argument $ j \pi /k$ for some $j=0,1,2,\cdots, 2k-1$, provided that $ e <e_{k,l}^{\infty}$ for direct $T_{k,l}$-type orbits and that $e \in I_{\text{retro}}$ for retrograde $T_{k,l}$-type orbits.  Moreover, every point on each ray $\theta = j\pi/k$, $j = 0,1,2, \cdots, 2k-1$, is either a perihelion, a aphelion or  a double point. 
\end{Lemma}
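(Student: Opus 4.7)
The plan is to exploit the dihedral symmetry of $\alpha$ furnished by Lemmas \ref{rotsyme} and \ref{lemmareflds}. Without loss of generality I would parametrize so that the underlying Kepler ellipse $\gamma$ has its perihelion on the positive $q_1$-axis at $t=0$; writing $\gamma(t)=r(t)e^{i\psi(t)}$, the radius $r$ is even and $T$-periodic while $\psi$ is odd and $T$-periodic, so $\alpha(t)=r(t)e^{i(t+\psi(t))}$. Under the eccentricity hypothesis, the analysis in Section \ref{sec:tanself} rules out self-tangencies, so non-injective points of $\alpha$ are transverse double points.

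For the first assertion, I would start from a putative double point $\alpha(t_1)=\alpha(t_2)$ with $t_1\not\equiv t_2\pmod{kT}$. Equality of radii forces $t_2\equiv \pm t_1\pmod{T}$ since within one Kepler period the function $r$ returns to $r(t_1)$ only at time $-t_1$. If $t_2=t_1+mT$ with $1\le m\le k-1$, then $T$-periodicity of $\psi$ reduces the argument equation to $mT\equiv 0\pmod{2\pi}$; combined with $T=2\pi l/k$ and $\gcd(k,l)=1$ this forces $k\mid m$, a contradiction. In the remaining case $t_2\equiv -t_1\pmod{T}$, oddness of $\psi$ collapses the argument equation to $2\arg\alpha(t_1)\equiv nT\pmod{2\pi}$ for some integer $n$, and $nT=2\pi nl/k$ then places $\arg\alpha(t_1)$ inside $\{j\pi/k:j=0,\ldots,2k-1\}$.

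For the second assertion, I would fix $t_1$ with $\arg\alpha(t_1)=j\pi/k$ and apply the reflection across this ray. By Lemma \ref{lemmareflds}, combined with Lemma \ref{rotsyme} and $\gcd(k,l)=1$, this reflection acts on $\alpha$ as $t\mapsto -t+mT$, where $m\in\{0,\ldots,k-1\}$ is the unique residue satisfying $ml\equiv j\pmod{k}$. Since $\alpha(t_1)$ lies on the axis of this reflection, $\alpha(t_1)=\alpha(-t_1+mT)$; if $2t_1\not\equiv mT\pmod{kT}$ then $\alpha(t_1)$ is a double point, and otherwise $t_1=(m+nk)T/2$ for some integer $n$, which makes $t_1$ either a multiple of $T$ (a perihelion) or an odd multiple of $T/2$ (an aphelion), in line with Lemma \ref{lemmaargutime}.

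The main obstacle I anticipate is the parity bookkeeping at the end: one must verify that depending on the parities of $k$ and $l$, exactly the appropriate residue class of $m+nk\pmod{2}$ is attainable, and that the resulting list of perihelion/aphelion times matches the argument list of Lemma \ref{lemmaargutime} in both cases $k+l$ even and $k+l$ odd. Everything else is a routine exploitation of the product structure $\alpha=e^{it}\gamma(t)$ together with the time-reversal symmetry of the Kepler ellipse.
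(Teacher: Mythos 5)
Your argument is correct, and it reaches the conclusion by a genuinely different route than the paper. The paper's proof of the first assertion is a counting argument: by Lemmas \ref{rotsyme} and \ref{lemmareflds} the trajectory is invariant under a dihedral group of order $2k$, the $2k$ points of a fixed radius $r\in(r_{\min},r_{\max})$ form a union of group orbits, and a group orbit has $k$ rather than $2k$ elements exactly when it lies on the rays $\theta=j\pi/k$; hence a coincidence among the $2k$ points forces them onto those rays. For the second assertion the paper argues locally: at a point of the ray with radius in $(r_{\min},r_{\max})$ the branch is neither contained in the ray nor perpendicular to the radial direction, so its image under the reflection is a second, distinct branch through the same point. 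You instead solve $\alpha(t_1)=\alpha(t_2)$ explicitly from $\alpha(t)=e^{it}\gamma(t)$, reducing equality of radii to $t_2\equiv\pm t_1\pmod T$ and then doing modular arithmetic with $T=2\pi l/k$ and $\gcd(k,l)=1$, and you realize the reflection across $\theta=j\pi/k$ as the parameter involution $t\mapsto -t+mT$ with $ml\equiv j\pmod k$. Both executions rest on the same symmetries, but yours is more explicit and in fact establishes the first assertion for all $e\in(0,1)$ without the hypothesis $e<e_{k,l}^{\infty}$ (consistent with the Remark following the lemma); as you correctly note, that hypothesis is only needed to guarantee, via Section \ref{sec:tanself}, that the resulting intersection points are transverse double points. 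One simplification: the ``parity bookkeeping'' you flag as the main obstacle is not actually required. The lemma asserts only the trichotomy perihelion/aphelion/double point, so it suffices that $2t_1\equiv mT\pmod{kT}$ forces $t_1$ to be an integer or half-integer multiple of $T$; you do not need to match the resulting list of times and arguments against the case analysis of Lemma \ref{lemmaargutime}.
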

\begin{proof} We prove the assertions only for direct $T_{k,l}$-type orbits. One can prove the same results for retrograde orbits in a similar way.

 In view of the rotational and reflection symmetries, the cardinality of the orbit of a point $q \in \C^*$ with $|q| \in [r_{\min}, r_{\max}]$ is given by $2k$ if $\text{arg}(q) \neq j \pi /k$ and by $k$ otherwise. It follows that any double point $q$ must satisfy $\text{arg}(q)= j \pi/k$. This proves the first assertion.

Let $q$ be a double point. By means of Lemma \ref{lemmaargutime}, we see that an aphelion or a perihelion cannot be a double point and hence we have $ r_{\min} < |q| < r_{\max}$. Hence,  the orbit is not perpendicular to the radial direction at the double point $q$. Moreover, since the orbit cannot be contained in the ray $\theta=j\pi/k$ either, by means of  the reflection symmetry with respect to the ray $\theta=j\pi/k$ we see that every point on this ray with the radius in $(r_{\min}, r_{\max})$ is in fact a double point from which the second assertion is proved.    This completes the proof of the lemma.
\end{proof}

\begin{Remark} \rm    Since no self-tangencies happen on the intervals $I_{\text{direct}}^1$ and $I_{\text{retro}}$, all double points in the previous lemma are transverse.
\end{Remark}

\begin{Remark} \rm  The previous lemma also holds true for $e>e_{k,l}^{\infty}$ for direct $T_{k,l}$-orbits. To see this,  we first recall that the angular velocity vanishes only along exterior loops. Then the proof of the previous lemma implies that every double point which does not lie on exterior loops has  argument $j \pi/k$. If there exist no intersections between attached loops, then we have nothing to prove. Suppose that such intersections exist. Then the assertion follows from the fact that $|\text{arg}(\alpha( jT+ t_{\text{inv}})) - \text{arg}(\alpha((j+1)T-t_{\text{inv}} ))|$ is constant for all $j=0,1,2, \cdots, k-1$, where $t=t_{\text{inv}}$  is time at which $\alpha$ has an inverse self-tangency. 
\end{Remark}

We now give an algorithm to draw a piecewise smooth curve, which after smoothing is homotopic without any disasters to $T_{k,l}$-type orbits with eccentricity smaller than $e_{k,l}^{\pm\infty}$.  We only consider direct orbits. For retrograde orbits, one can show with $k+l$ instead of $|k-l|$ in a similar way. In view of the rotational and reflection symmetries,  it suffices to consider the part of the curve in the region
\begin{equation*}
\left \{     q \in \R^2  : \arg(q)    \in  [ 0 ,   \frac{\pi}{ k} ]   , \; r_{\min} \leq|q| \leq r_{\max}     \right \}.
\end{equation*}

\;\;

\textit{Case 1.}  $|k-l|$ is odd.\\
Abbreviate $N= (|k-l|-1)/2$. By Lemma \ref{lemmaargutime} on the ray $\theta = 0$ there exist the perihelion $a_0 = (r_{\min}, 0)$ and $N$ double points $a_1, a_2, \cdots, a_N$. On the other hand, on   $\theta = \pi/k$ , there exist another $N$ points $b_1, b_2, \cdots, b_N$ and the aphelion  $b_{N+1} = r_{\max}\exp( i \pi/k)$. Taking into account  the radial velocity $\dot{r}$, we obtain that
\begin{equation*}
r_{\min} = |a_0| < |b_1| < |a_1| < |b_2| < |a_2| < \cdots < |a_{N-1}| < |b_N| < |a_N| < |b_{N+1}| = r_{\max}.
\end{equation*}
Moreover, the point $b_j$ is connected with $a_{j-1}$ and $a_j$ for $j = 1,2, \cdots, N$ and $b_{N+1}$ is connected only with $a_N$. For convenience, we connected those points by straight lines, see Figure \ref{aaa}. 

\;\;

\textit{Case 2.} $|k-l|$ is even.\\
Abbreviate $N= |k-l|/2$. Then on the ray $\theta =0$ there exist the perihelion $a_0 = (r_{\min}, 0)$, the aphelion $a_N=(r_{\max}, 0)$ and $(N-1)$ double points $a_1, a_2, \cdots, a_{N-1}$. On the ray $\theta = \pi/k$, there exist $N$ double points $b_1, b_2, \cdots, b_N$. Those points satisfy
\begin{equation*}
r_{\min} = |a_0| < |b_1 | < |a_1| < |b_2 | < |a_2| < \cdots < |a_{N-1}| < |b_N| < |a_N| = r_{\max}.
\end{equation*}
In this case, $b_j$ is connected with $a_{j-1}$ and $a_j$ for $j = 1,2, \cdots, N$. Also, we connect them by straight lines, see Figure \ref{bbb}. 
\begin{figure}[t]
\begin{subfigure}{0.48\textwidth}
  \centering
  \includegraphics[width=0.95\linewidth]{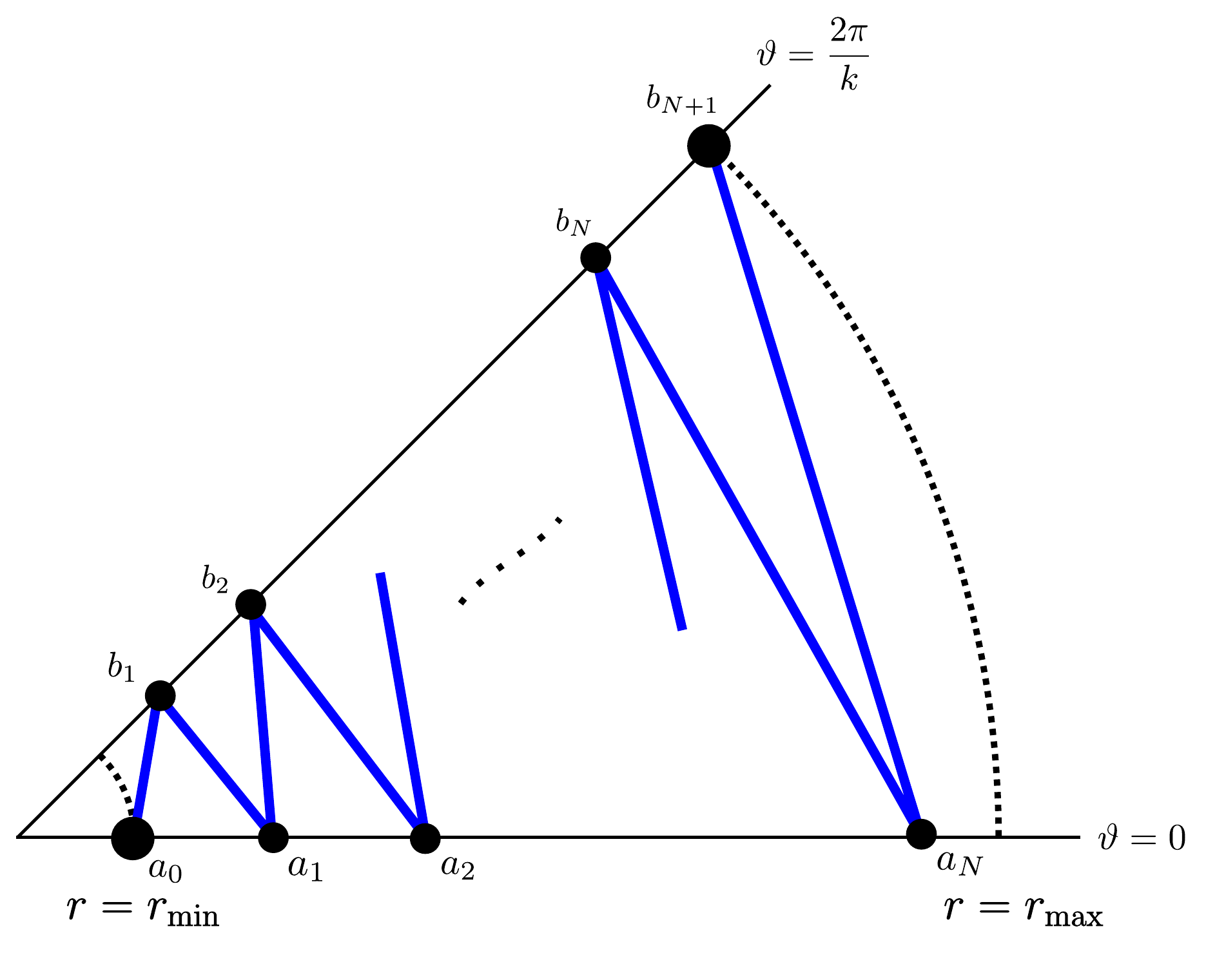}
  \caption{$|k-l|$ is odd  }
  \label{aaa}
\end{subfigure}
\begin{subfigure}{0.48\textwidth}
  \centering
  \includegraphics[width=0.95\linewidth]{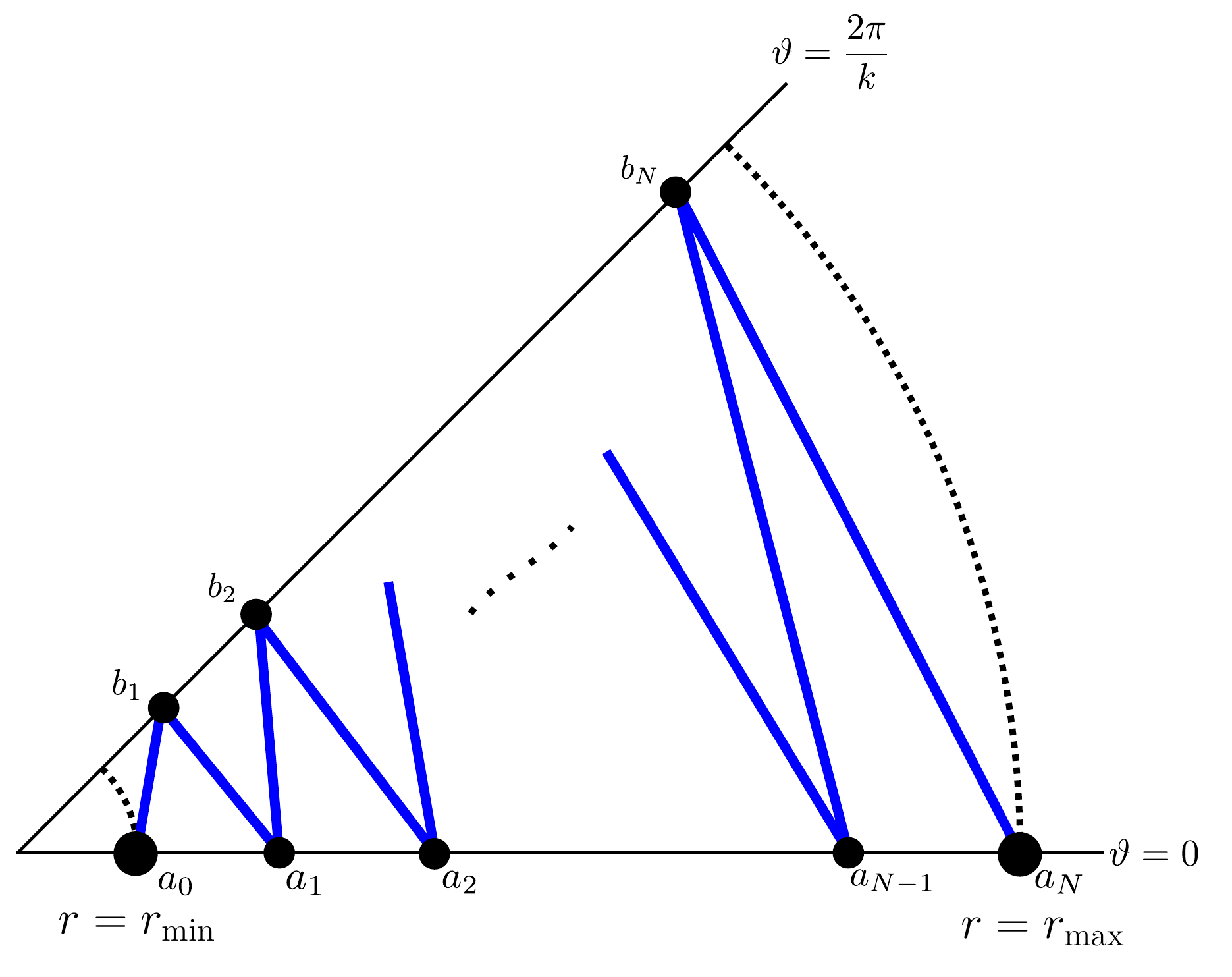}
 \caption{ $|k-l|$ is even  } 
 \label{bbb}
\end{subfigure}
\caption{Connect the marked points}
\end{figure}

\;\;\;

Using the rotational and reflection symmetries, we then obtain the   trajectory of  a direct $T_{k,l}$-type orbit up to homotopy without any disaster.  We denote   the   points on each ray $\theta = 2j\pi/k$ or $\theta = (2j+1)\pi/k$  by the same letters $a$'s or $b$'s, respectively. Recall that we parametrize the orbit so that the perihelion $a_0 = (r_{\min}, 0)$ is the starting point. Then in view of   equation (\ref{goodcompute}) we see that each time interval $[jT, (j+1)T]$ for $j = 0,1,2, \cdots, l-1$ is associated with a finite sequence of points
\begin{equation*}
a_0, \; b_1, \; a_1, \; b_2, \; a_3, \; \cdots, \; a_N, \; b_{N+1}, \; a_N, \; b_{N-1}, \; \cdots, \; b_2, \; a_1, \; b_1 , \; a_0
\end{equation*}
for the case $|k-l|$ is odd or
\begin{equation}\label{sequnsdkjbgs}
a_0, \; b_1, \; a_1, \; b_2, \; a_3, \; \cdots, \; b_N, \; a_{N}, \; b_N, \; a_{N-1}, \; \cdots, \; b_2, \; a_1, \; b_1 , \; a_0
\end{equation}
for the case $|k-l|$ is even, which corresponds to the period $T$ of the underlying Kepler ellipse $\gamma$, see Figure  \ref{figeven}.

 \begin{figure}[t]
\begin{subfigure}{0.32\textwidth}
  \centering
  \includegraphics[width=0.95\linewidth]{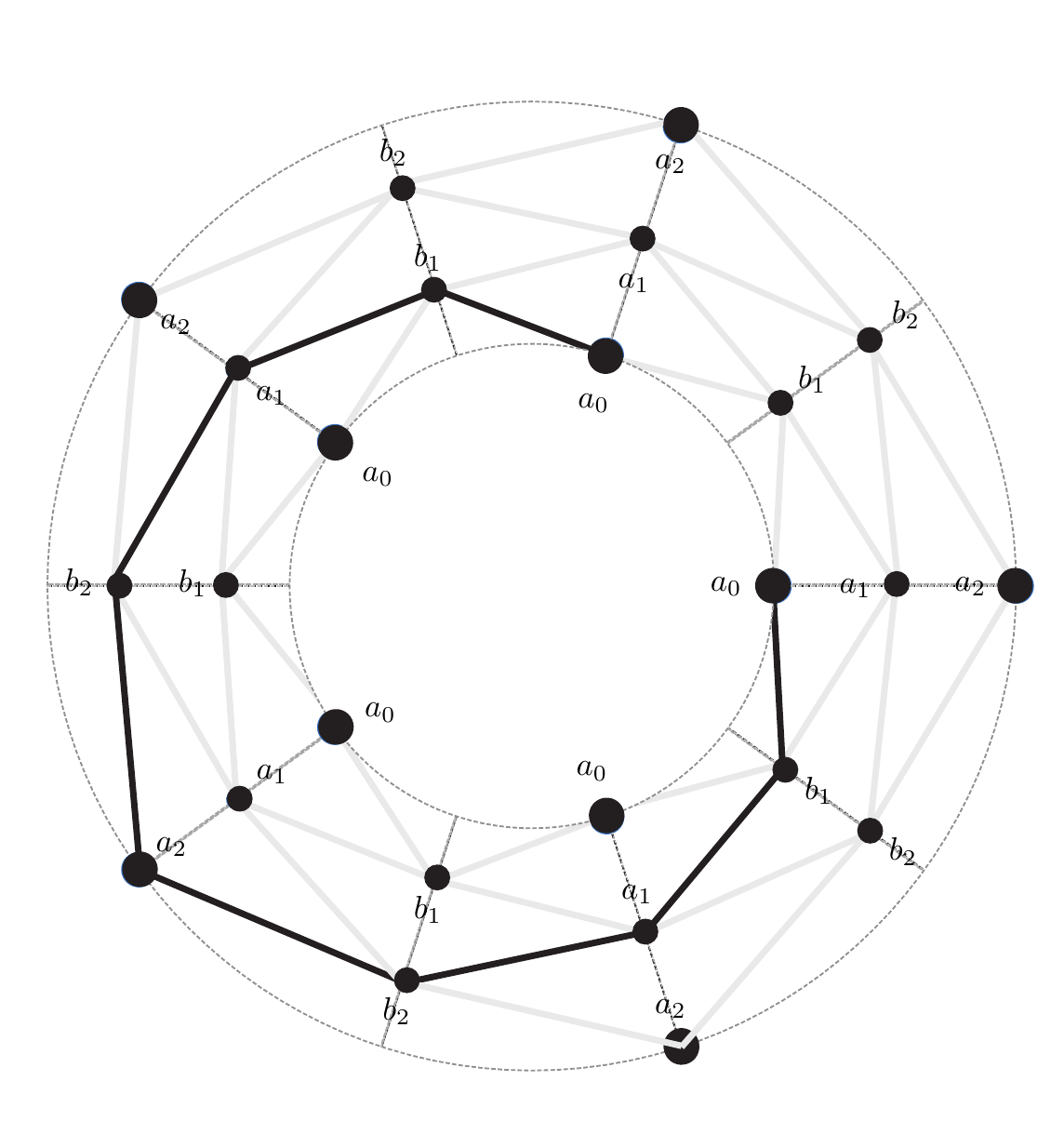}
\end{subfigure}
\begin{subfigure}{0.32\textwidth}
  \centering
  \includegraphics[width=0.95\linewidth]{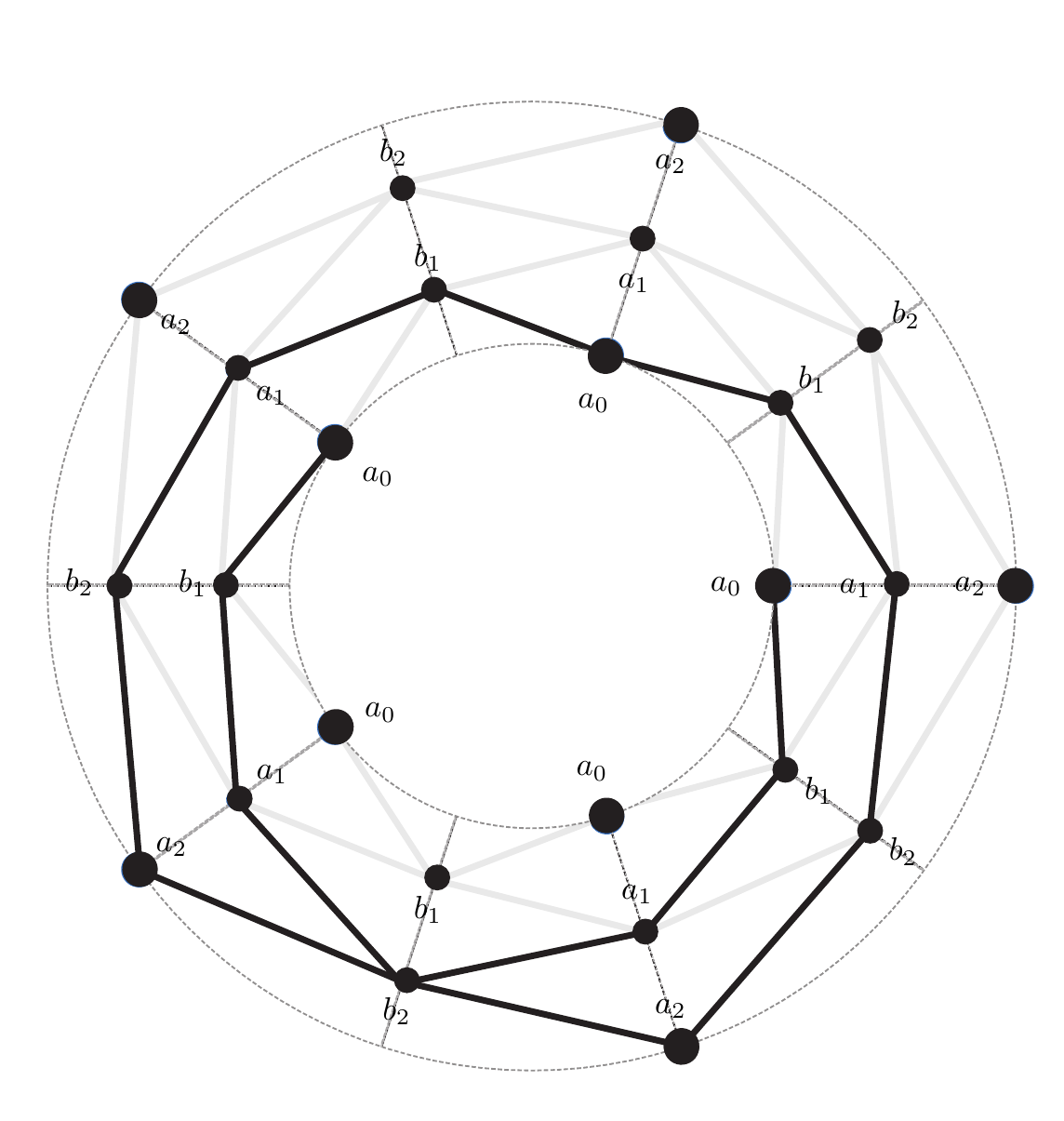}
\end{subfigure}
\begin{subfigure}{0.32\textwidth}
  \centering
  \includegraphics[width=0.95\linewidth]{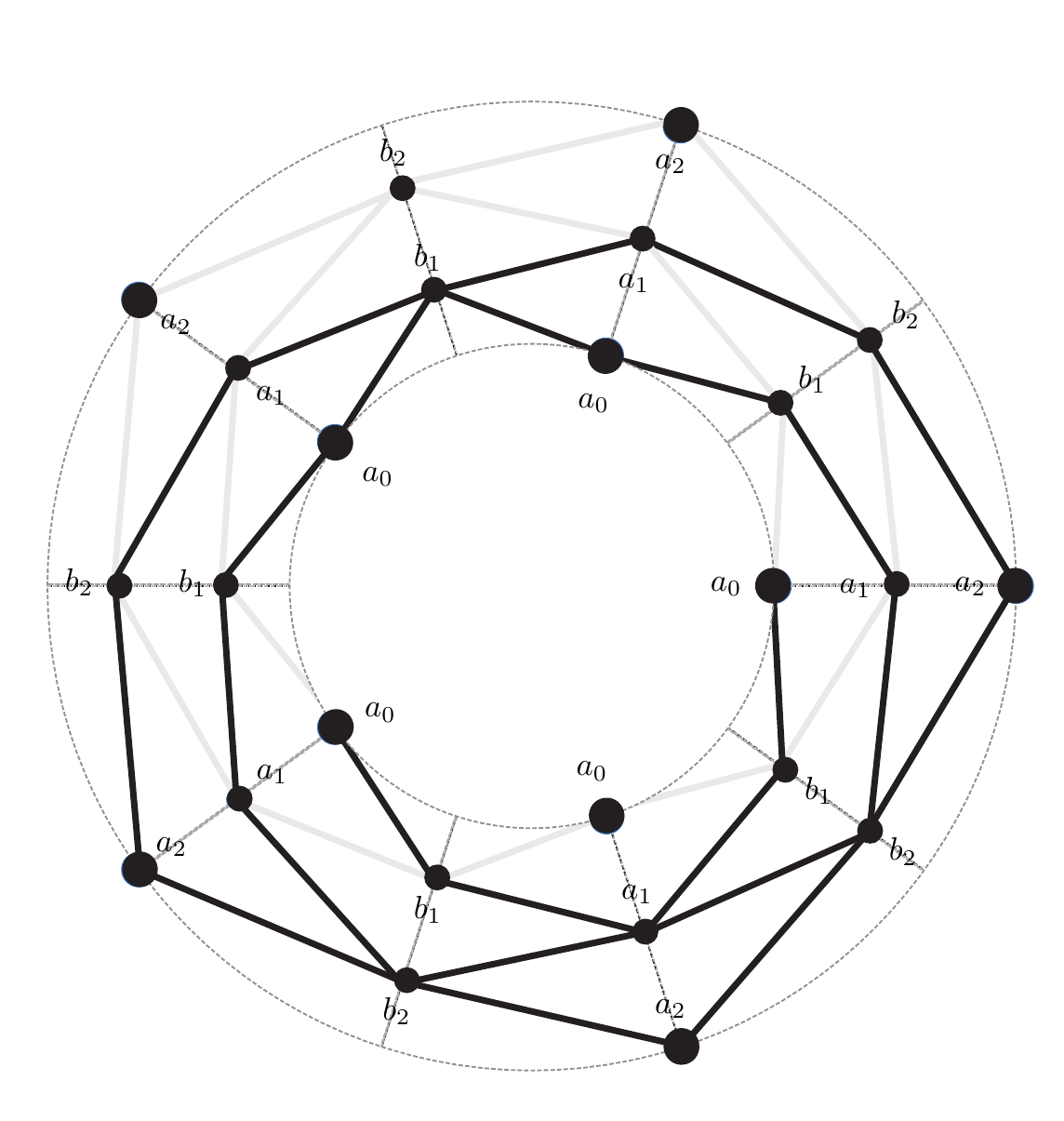}
\end{subfigure}
\begin{subfigure}{0.32\textwidth}
  \centering
  \includegraphics[width=0.95\linewidth]{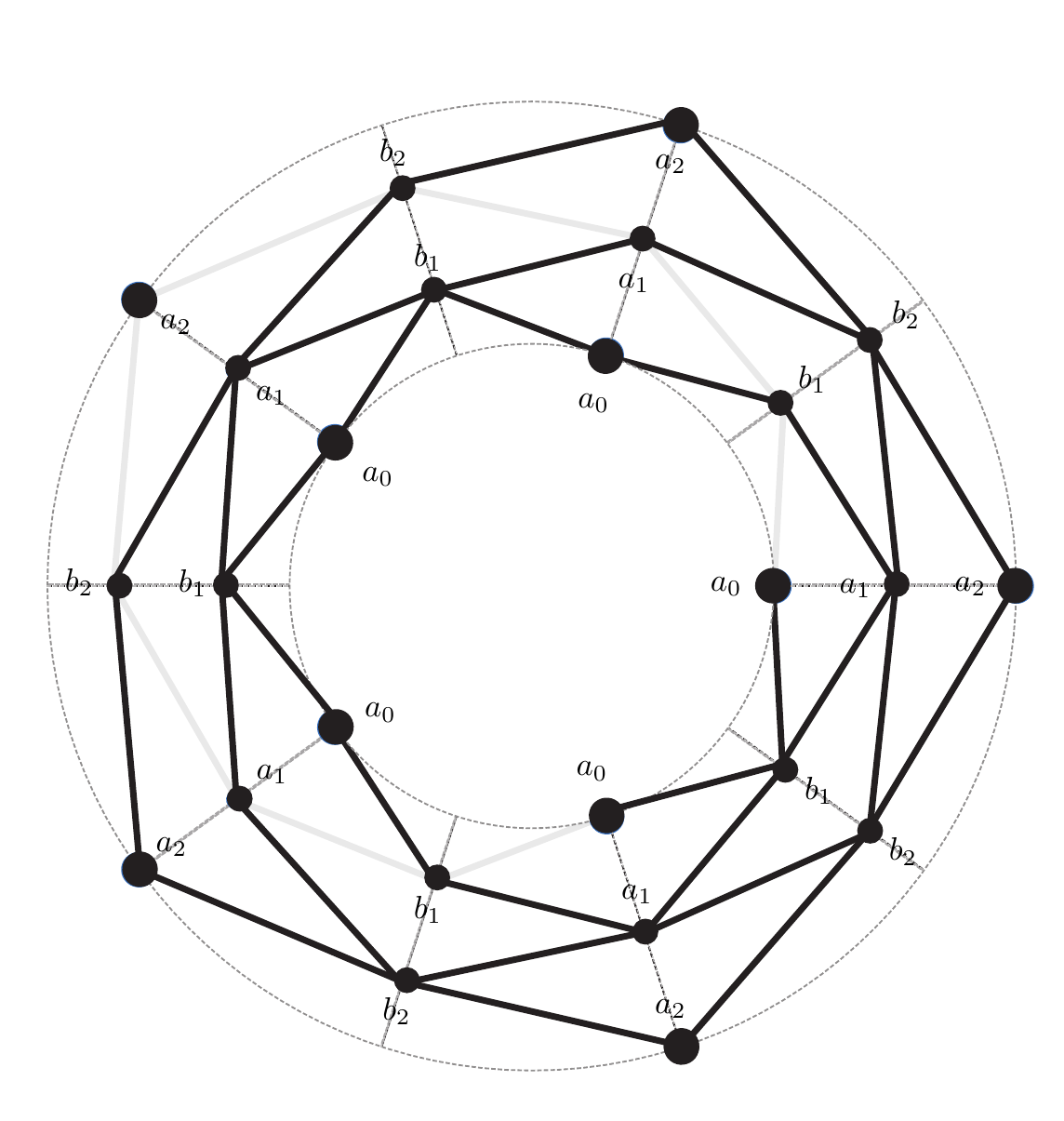}
\end{subfigure}
\begin{subfigure}{0.32\textwidth}
  \centering
  \includegraphics[width=0.95\linewidth]{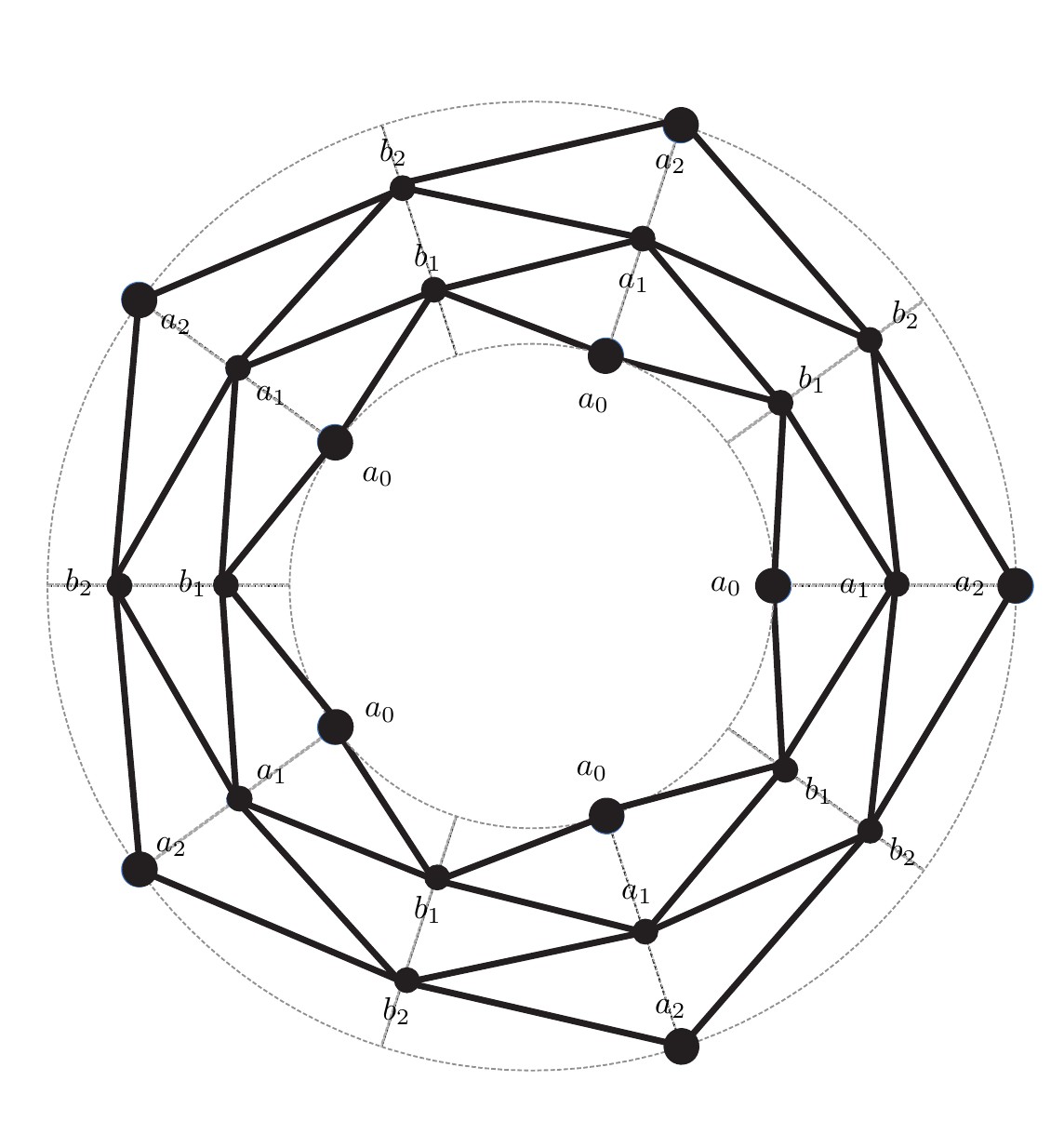}
\end{subfigure}
\caption{A direct $T_{5,1}$-type  orbit (before smoothing). The $j$th step corresponds to a $j$-fold covered Kepler ellipse, $j=1,2,3,4,5$. }
\label{figeven}
\end{figure}

Since we connect the marked points $a$'s and $b$'s on each ray $\theta = j \pi /k$ by straight lines, the obtained trajectory is not smooth. More precisely, the trajectory has a corner at each marked point. However, following the sequence (\ref{sequnsdkjbgs}) one can smoothen (with a small perturbation) the corners and then obtain a unique smooth trajectory which is a generic immersion up to homotopy without any disaster.

In Figure  \ref{hoyle52} we compare  an original orbit  in the rotating Kepler problem and an orbit  obtained by the algorithms (before smoothing).

\begin{figure}[t]
\begin{subfigure}{0.49\textwidth}
  \centering
  \includegraphics[width=0.85\linewidth]{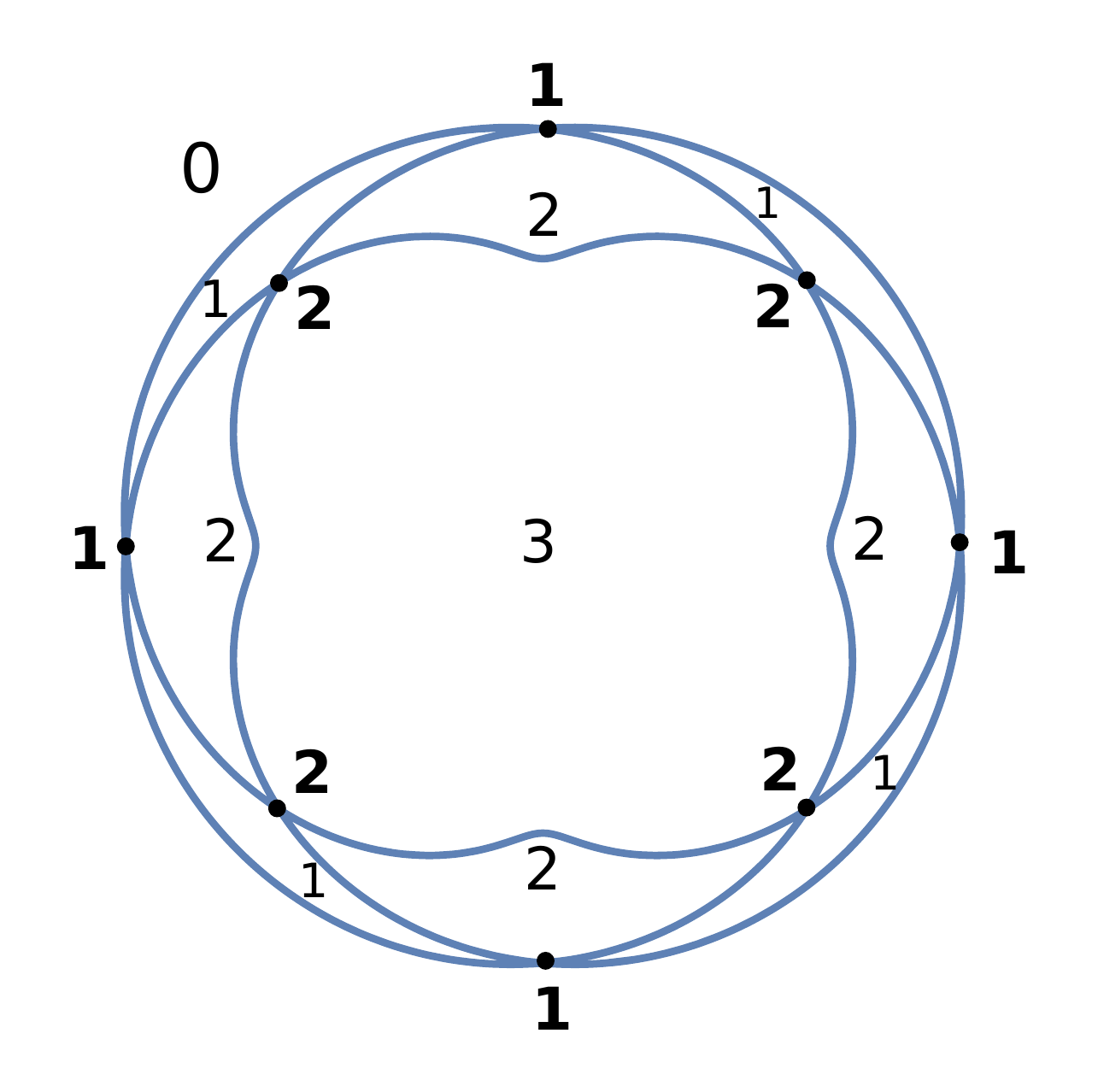}
  \caption{A direct $T_{4,7}$-type orbit of $e=0.2$. The numbers in the interiors of the components means their winding numbers and the bold ones are indices of the double points }
 \label{  }
\end{subfigure}
\begin{subfigure}{0.49\textwidth}
  \centering
  \includegraphics[width=0.85\linewidth]{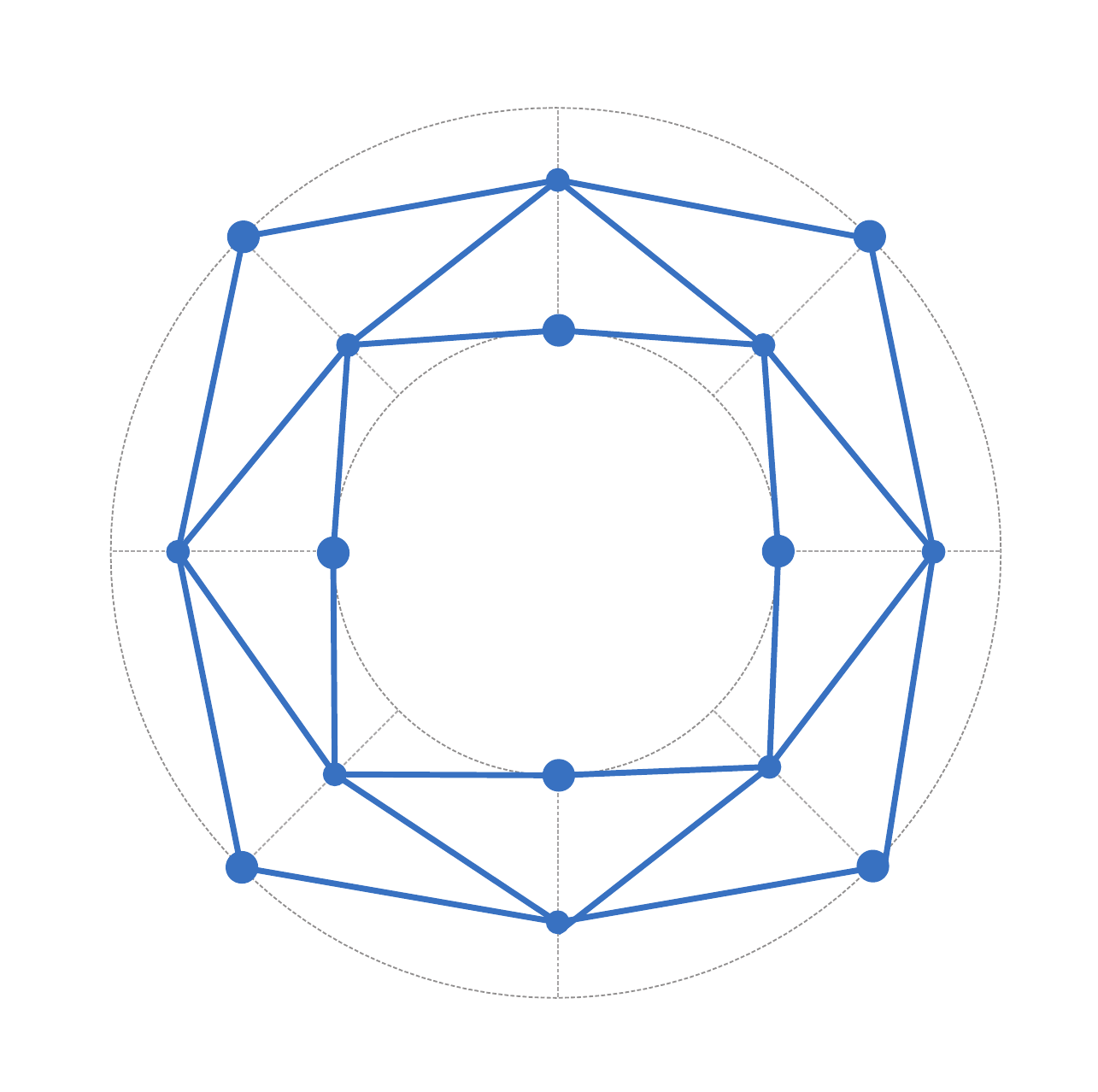}
  \caption{ The orbit obtained by the algorithm}
  \label{  }
\end{subfigure}
\caption{ A direct $T_{4,7}$-type orbit. There exist $4 \times (7-4-1) =8$ double points and $7-4-1=2$ layers. Each layer consists of $4$ connected components. The winding number equals $7-4=3$.}
\label{hoyle52}
\end{figure}

\begin{Remark} \rm  By means of the idea  of Ptolemy and Copernicus which they used to verify their geocentric and heliocentric theories, for example see \cite{Holye},   one can approximate torus-type orbits, provided that the eccentricity is sufficiently small, as follows.

Fix $k$ and $l$ and  consider the $e$-homotopy of the $T_{k,l}$-torus family. To obtain (homotopic) trajectories for small eccentricities, we  begin with a (inertial) Kepler orbit.  Since we are considering negative Kepler energies, all Kepler orbits are ellipse. Recall from \eqref{eqkepler7} that
\begin{equation}\label{sdfsdf1}
r = \frac{ a(1-e^2)}{1+e \cos \theta}.
\end{equation}
By the Kepler  laws we also have
\begin{equation}\label{sdfsdf2}
r^2 \frac{ d \theta }{dt } = \sqrt{ a(1-e^2)} 
\end{equation}
and
\begin{equation}\label{sdfsdf3}
 \frac{k}{l} = \frac{2\pi}{T}=\frac{1}{a^{3/2}},
\end{equation}
where $T$ is the period of the Kepler ellipse.   Using (\ref{sdfsdf1}) and (\ref{sdfsdf3}) we rewrite (\ref{sdfsdf2}) as
\begin{equation}\label{eqderivativee}
  \frac{ d \theta }{dt } = \frac{\sqrt{ a(1-e^2)}}{r^2} = \frac{ k ( 1 + e \cos \theta)^2}{l (1-e^2)^{3/2}} . 
\end{equation}
The Taylor expansion  for the right hand side of (\ref{eqderivativee}) at $e=0$ gives rise to
\begin{equation*}
\frac{ d \theta}{dt} = \frac{k}{l} +   \frac{2k}{l} e \cos \theta + O(e^2)
\end{equation*}
and hence
\begin{align}
 \nonumber \theta &= \frac{k}{l} t +   \frac{2k}{l} e \int_0^t \cos( \theta(\tau)) d\tau + O(e^2) \\
\nonumber &= \frac{k}{l} t +   \frac{2k}{l} e \int_0^t \cos\left( \frac{k}{l} \tau + O(e) \right) d\tau + O(e^2) \\
\label{sdfsdf4} &= \frac{k}{l} t  + 2   e \sin \frac{k}{l}t +O(e^2).
\end{align}
Using \eqref{sdfsdf4}, a similar business for (\ref{sdfsdf1}) yields
\begin{equation}\label{sdfsdf5}
\frac{r}{a} = 1 - e \cos \frac{k}{l}t + O(e^2).
\end{equation}
In view of  (\ref{sdfsdf4}) and (\ref{sdfsdf5}), we now obtain  a complex number expression for the Kepler orbit  
\begin{equation}\label{sdfsdf6}
\zeta(t) = r(t) \exp{ i \theta(t)} = a( 1 - e \cos \frac{k}{l}t + O(e^2) ) \exp i( \frac{k}{l} t  + 2   e \sin  \frac{k}{l}t + O(e^2)).
\end{equation}
The linear approximation to (\ref{sdfsdf6}) in $e$ then yields
\begin{eqnarray}
\nonumber \frac{\zeta(t)}{a} &=& \exp ( i\frac{k}{l}t) +( - \cos \frac{k}{l}t \exp (i\frac{k}{l}t) + 2 i \sin \frac{k}{l}t \exp (i\frac{k}{l}t) ) e   \\
\nonumber &=& \exp ( i\frac{k}{l}t) +(  -\cos^2 \frac{k}{l}t + i \cos \frac{k}{l}t \sin \frac{k}{l}t - 2 \sin ^2 \frac{k}{l}t ) e   \\
\nonumber &=& -2 e + \exp ( i\frac{k}{l}t) + \cos \frac{k}{l}t(    \cos  \frac{k}{l}t    +  i \sin  \frac{k}{l}t )    e   \\
\label{sdfsdf7}&=& -2 e + ( 1 + e  \cos \frac{k}{l}t) \exp(i \frac{k}{l} t).
\end{eqnarray}
If the eccentricity is sufficiently small, or equivalently the   energy is sufficiently close to one of the extremal energies, then (\ref{sdfsdf7}) gives rise to the approximated orbit $z(t) = R_{\pm  t}\zeta(t),$ where $R_{\theta}$ is the rotation matrix and the plus or the minus sign corresponds to the retrograde, respectively, the direct orbit. In Figure \ref{hoyle41sdsd}, we compare an approximated orbit and an original periodic orbit in the rotating Kepler problem of small eccentricity.  

One can use the approximated orbit $z=z(t)$ to prove the lemmas in Sections \ref{sdfsdsssasasd} and \ref{sdflsdhfeg} whose assertions are the main ingredients in the proofs of Lemma \ref{corollarypositionofdouble}. Therefore, the same algorithms as before carry over to the case of the approximated orbit. 
\begin{figure}[t]
\begin{subfigure}{0.49\textwidth}
  \centering
  \includegraphics[width=0.6\linewidth]{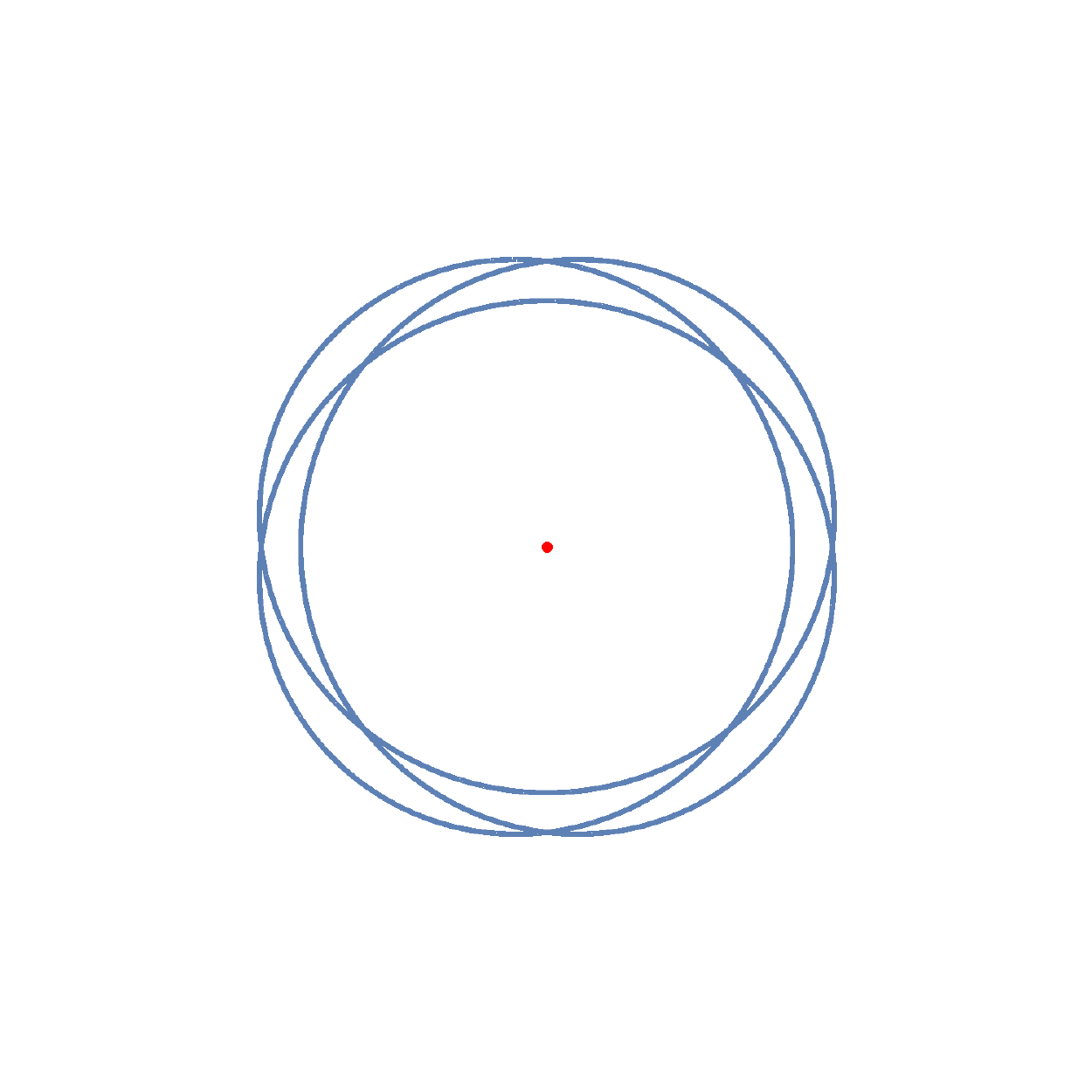}
  \caption{ The approximated orbit}
 \label{   }
\end{subfigure}
\begin{subfigure}{0.49\textwidth}
  \centering
  \includegraphics[width=0.6\linewidth]{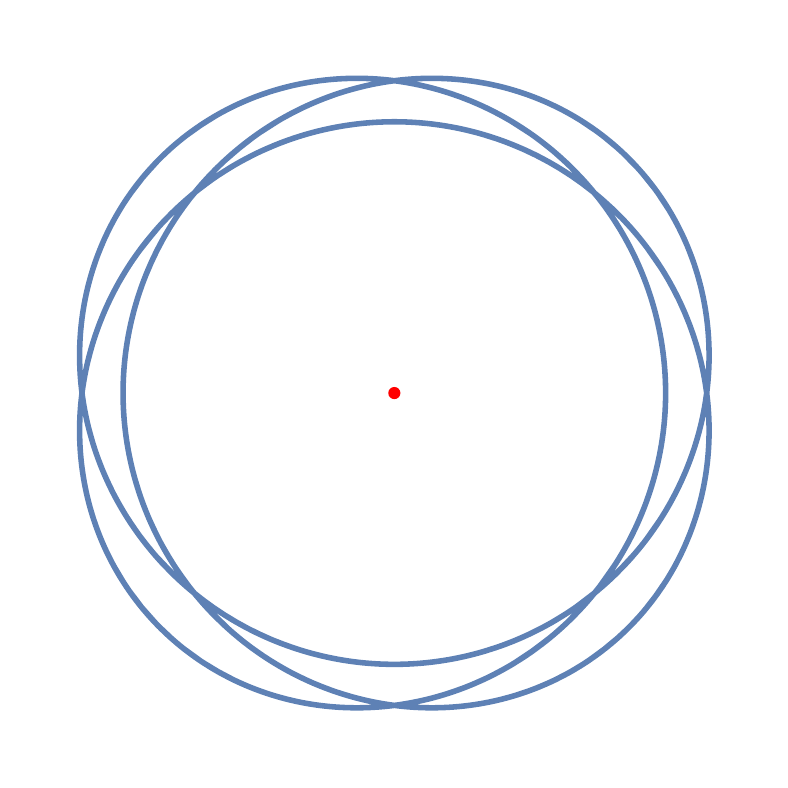}
  \caption{The original orbit in the rotating Kepler problem of which eccentricity equals 0.1}
 \label{  }
\end{subfigure}
\caption{$T_{4,1}$-torus type orbits}
\label{hoyle41sdsd}
\end{figure}
\end{Remark}

\section{Computation of the $J^+$-invariant}\label{sectheorem1}

Fix $k$ and $l$ which are relatively prime and  consider the $e$-homotopy of the $T_{k,l}$-torus family. By  Proposition  \ref{corstar} we just need to choose suitable representatives associated with the intervals for the cases $k>l$ and $k<l$.

\;\;\;

\textit{Case 1.} On $  I_{\text{direct}}$ for $k>l$ or on $I_{\text{direct}}^1$ for $k<l$ \\
We choose $ e \in I_{\text{direct}}^1$ for both cases $k>l$ and $k<l$. After obtaining the trajectory   by the arguments in Section \ref{sdflsdhfeg}, we compute the $J^+$-invariant using the   
 formula which is given in Theorem \ref{virotheorem}. We first observe that the complement of the trajectory in $\R^2$ consists of $k(|k-l|-1) +2$ connected components. The center component contains the origin and the most outside one is the unbounded component. The remaining $k(|k-l|-1)$ components form $(|k-l|-1)$ \textit{layers} of bounded components, where each layer consists of $k$ bounded components.  

Choose  $ \theta_0 \neq j\pi/k$ for some $j$ and rotate the curve by the angle $-\theta_0$ so that the ray $\theta  = \theta_0$ becomes the ray $\theta =0$. Then the ray $\theta =  0$   can be written as the union 
\begin{equation*}
(0, d_0 ] \cup [ d_0, d_1 ] \cup [d_1,d_2] \cup \cdots \cup [d_{|k-l|-2}, d_{|k-l|-1}] \cup [ d_{|k-l|-1}, \infty) 
\end{equation*}
according to intersections with the layers. We label each layer in such a way that \textit{$i$th-layer} is the one which corresponds to the interval $[d_{i-1}, d_i]$ for $ i = 1,2, \cdots, |k-l|-1$. The center and unbounded components are referred to as the \textit{zeroth} and \textit{$|k-l|$th-layers}, respectively.

The winding number of each component is given as follows. Recall that the winding number of a component of the complement is defined to be the winding number of the curve around any interior point in the component. We first note that all components in the same layer are of the same winding number. Choose  some $ \theta_0 \neq j\pi/k$ as in the above.  Since direct $T_{k,l}$-type orbits always rotate  in the same direction: clockwise for the case $k>l$ and counterclockwise for the case $k<l$, we see that the winding number of a component $C$ equals  the number of intersections between the trajectory and the ray $\theta = \theta_0$ starting at any point on $C \cap \left \{ \theta = \theta_0 \right \}$. We then conclude that the zeroth component is of winding number $l-k$ and the components in the $i$th layer are of winding number $l-k-i$. The $|k-l|$th-layer has the winding number zero. Note that  as we traverse from the zeroth layer to the $|k-l|$th-layer, the winding number decreases by one everytime we cross a layer.

We also label double points as follows. Recall that the double points can be divided into $(|k-l|-1)$ subsets according to  the associated radii. We order such radii in increasing order:  $r_1 < r_2 < \cdots < r_{|k-l|-1}$, and  the double points on the circle $r = r_j$ are then called the  \textit{$j$th double points}, $j = 1,2,\cdots, |k-l|-1$.   Recall that the index of a double point is defined by the arithmetic mean of the winding numbers of adjacent components. Note that each $j$th double point is surrounded by four components: one in the $(j-1)$th layer, two in the $j$th layer and one in the $(j+1)$th layer. Since every component in the $i$th layer is of winding number $l-k-i$, we conclude that all the $j$th double points are of index 
\begin{equation*}
\frac{1}{4} \bigg(  (l-k-i+1) + (l-k-i) + (l-k-i) + (l-k-i-1)          \bigg)=l-k-i , 
\end{equation*}
see Figure  \ref{hoyle52}.

By  Viro's formula  the $J^+$-invariant of the direct $T_{k,l}$-type orbits is given by
\begin{eqnarray*}
J^+ &=& 1 + k(|k-l|-1) - \bigg(  1 \cdot  |k-l| ^2 + k \cdot (|k-l|-1)^2 + k \cdot(|k-l|-2)^2 + \cdots + k \cdot 1^2   \bigg) \\
&& \; \;  \; \;\;\;\;\;\;\;\; \;\;\; \;\;\; \;\;\; \;\;\; \;\;\; \;\; + \bigg(  k \cdot (|k-l|-1)^2 + k \cdot (|k-l|-2)^2 + \cdots + k \cdot1^2  \bigg) \\
&=& 1 + k(|k-l|-1) -|k-l|^2\\
&=& \begin{cases}  1-k + kl - l^2 & \text{ for $k>l$ and $  I_{\text{direct}}$} \\ 1 - k - 2k^2 +3kl - l^2 & \text{ for $k<l$ and $I_{\text{direct}}^1$} \end{cases}
\end{eqnarray*}

\;\;

\textit{Case 2.} On $  I_{\text{direct}}^2$ for $k<l$\\
Abbreviate by $\alpha_j$ orbits for $e \in I_{\text{direct}}^j$, for $j =1,2$. Then $\alpha_2$   looks like   $\alpha_1$   with  $k$ interior loops, which do not wind around the origin,  attached at each aphelion.  In particular, the aphelions of $\alpha_1$ become double points.  It does not affect any difference on the winding numbers and indices of other components and double points,  respectively.    Since the winding number of the components enclosed by attached interior loops equals $l-k+1$, the index of the new double points is given by 
\begin{equation*}
\frac{1}{4} \bigg( (l-k+1) + (l-k) + (l-k) + (l-k-1)     \bigg) = l-k.
\end{equation*}
Then in view of the computation in the case 1 we compute that
\begin{eqnarray*}
J^+ &=& 1 - k - 2k^2 +3kl - l^2 + k - k (l-k+1)^2 + k (l-k)^2 \\
&=& 1 - k + kl - l^2.
\end{eqnarray*}

\begin{Remark} \rm One can compute the $J^+$-invariant for the case 2 by using \cite[Lemma 4]{invariant}. Indeed, since $k$ new born loops lie in the center component, their winding number is given by $l-k$. \cite[Lemma 4]{invariant}  says that the $J^+$-invariant for the case 2 differs from that for the case 1 with $l>k$ by $-2k(l-k)$. We then compute that
\begin{eqnarray*}
J^+(\text{Case 2}) &=& J^+(\text{Case 1}) - 2k(l-k) \\
&=& 1-k-2k^2 +3kl -l^2 -2kl +2k^2 \\
&=& 1-k+kl-l^2. 
\end{eqnarray*}
\end{Remark}

\;\;

\textit{Case 3.} On $  I_{\text{retro}}$   for both $k>l$ and $k<l$\\
Recall that for retrograde orbits, no disaster happens. Therefore, one can choose any eccentricity in $I_{\text{retro}}$.  By a  similar argument as in the  case 1 we conclude that the complement consists of $k(k+l-1)+2$ components and they consist of $(k+l+1)$-layers: the zeroth layer (the center component), the $(k+l)$th layer (the unbounded component) and $(k+l-1)$ layers, where each layer consists of $k$ bounded components. The components in the $i$th layer are of winding number $k+l-i$ and then $j$th double points are of index $k+l-i$. We then compute that
\begin{eqnarray*}
J^+ &=& 1 + k(k+l-1) - \bigg(  1 \cdot (k+l)^2 + k \cdot (k+l-1)^2 + k \cdot(k+l-2)^2 + \cdots + k \cdot 1^2   \bigg) \\
&& \;\;\;\;\;\;\;\; \;\;\; \;\;\; \;\;\; \; \;\;\; \;\;\; \;\; + \bigg(  k \cdot (k+l-1)^2 + k \cdot (k+l-2)^2 + \cdots + k \cdot1^2  \bigg) \\
&=& 1 + k(k+l-1) -(k+l)^2\\
&=& 1-k - kl - l^2.
\end{eqnarray*}

\;\;

We have proven
\begin{Proposition}\label{jplusformeusla} Let $\alpha$ be a $T_{k,l}$-type orbit. Its $J^+$-invariant is given by
 \begin{eqnarray*}
J^+(\alpha) &=& \begin{cases} 1 - k + kl -l^2    & \text{ if  ${k>l}$ and $\alpha$ is direct}  \\    1 - k - 2k^2 + 3kl - l^2     &  \text{ if  ${k<l}$ and $e \in I_{\text{direct}}^1$}  \\   1 - k + kl -l^2        &  \text{ if  ${k<l}$ and $e \in I_{\text{direct}}^2$}   \\     1 - k - kl - l^2 & \text{ if $\alpha$ is retrograde}        \end{cases} 
\end{eqnarray*}
\end{Proposition}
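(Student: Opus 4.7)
The plan is to apply Viro's formula (Theorem \ref{virotheorem}) to a single representative orbit in each of the homotopy intervals described in Proposition \ref{corstar}. Since that proposition tells us $J^+$ is constant on $I_{\text{direct}}$ (when $k>l$), on $I_{\text{direct}}^1$ and $I_{\text{direct}}^2$ separately (when $k<l$), and on $I_{\text{retro}}$ in either case, it suffices to pick one eccentricity per interval and compute $J^+$ for the (homotopy class of the) piecewise-smooth curve produced by the algorithm of Section \ref{sdflsdhfeg}. The key structural fact to exploit is that this algorithm exhibits the trajectory as a union of $|k-l|+1$ (or $k+l+1$ in the retrograde case) rotationally symmetric ``layers'' of components of $\C \setminus \alpha$.

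First I would handle the direct case with $e \in I_{\text{direct}}^1$ (which covers $I_{\text{direct}}$ when $k>l$). The algorithmic trajectory partitions the plane into the center component, $|k-l|-1$ annular layers each consisting of exactly $k$ components, and the unbounded component, for a total of $k(|k-l|-1)+2$ components. The winding number of a component equals the algebraic count of crossings of a generic radial ray issuing from an interior point; because a direct $T_{k,l}$-orbit on $I_{\text{direct}}^1$ has monotone angular direction (Section \ref{sec:tanself}), this count drops by one as we pass from the $i$th to the $(i{+}1)$st layer. Hence the center component has winding number $l-k$, the $i$th layer has winding $l-k-i$, and the unbounded component has winding $0$. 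By Lemma \ref{corollarypositionofdouble} the double points occur on rays $\theta=j\pi/k$ at $|k-l|-1$ distinct radii, giving $k(|k-l|-1)$ double points in total; each ``$j$th'' double point is adjacent to one component in layer $j-1$, two in layer $j$, and one in layer $j+1$, so its index is $\tfrac14\bigl((l-k-j{+}1)+2(l-k-j)+(l-k-j{-}1)\bigr)=l-k-j$. Plugging these ingredients into Viro's formula, the sum $\sum_p \text{ind}_p^2$ cancels the middle-layer contribution to $\sum_C w_C^2$, collapsing the expression to
\begin{equation*}
J^+ = 1 + k(|k-l|-1) - (l-k)^2,
\end{equation*}
which yields $1-k+kl-l^2$ when $k>l$ and $1-k-2k^2+3kl-l^2$ when $k<l$.

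Next I would handle the case $e \in I_{\text{direct}}^2$ (only possible when $k<l$). Here the orbit has acquired $k$ interior loops, one attached at each aphelion, as in event $(I_{-\infty})$. Each former aphelion becomes a new double point adjacent to one component of winding $l-k+1$ inside the new loop, two components of winding $l-k$, and one component of winding $l-k-1$, so its index is $l-k$. Re-applying Viro's formula (adding the $k$ new components, $k$ new double points, and correcting the winding-number sum) is straightforward; alternatively, one can apply \cite[Lemma 4]{invariant} which predicts that $J^+$ changes by exactly $-2k(l-k)$ across the event $(I_{-\infty})$, giving the same answer $1-k+kl-l^2$ as a sanity check. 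For retrograde orbits, no disasters occur on $I_{\text{retro}}$, so any representative works: the exact same layer analysis applies with $k+l$ replacing $|k-l|$, yielding $J^+ = 1 + k(k+l-1)-(k+l)^2 = 1-k-kl-l^2$.

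The main obstacle I anticipate is bookkeeping for the adjacency structure: verifying that each double point is genuinely bordered by precisely the four components claimed (one in each of the neighbouring layers, two within its own), and that the algorithmically drawn curve in Section \ref{sdflsdhfeg} is in fact ambient homotopic to the true orbit without any of the three disasters. Both reduce to Lemmas \ref{8h98g97gds} and \ref{corollarypositionofdouble}, which control how many points of $\alpha$ lie on a generic ray and pin down all double points to the rays $\theta=j\pi/k$. A secondary subtlety is sign conventions for winding numbers: for direct orbits $w_0 = l-k$ can be positive or negative depending on whether $k<l$ or $k>l$, but since Viro's formula involves only the squares, both subcases share the same clean formula, which is a reassuring consistency check.
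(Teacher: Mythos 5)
Your proposal is correct and follows essentially the same route as the paper: constancy of $J^+$ on the intervals from Proposition \ref{corstar}, the layer decomposition of the complement with winding numbers and double-point indices pinned down by Lemmas \ref{8h98g97gds} and \ref{corollarypositionofdouble}, Viro's formula collapsing to $1+k(|k-l|-1)-|k-l|^2$ (resp.\ $1+k(k+l-1)-(k+l)^2$), and the interior-loop correction on $I_{\text{direct}}^2$ (the paper likewise records the $-2k(l-k)$ shortcut via \cite[Lemma 4]{invariant} as a cross-check). No substantive differences.
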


Consider  a periodic orbit of second kind (near the heavier primary) in the PCR3BP which is  obtained from a $T_{k,l}$-type orbit under a small perturbation of the mass ratio $\mu$ and which is a generic immersion. Note that if $\mu$ is sufficiently small, the $\mu$-perturbation (or $\mu$-homotopy) is a generic homotopy without any disaster. In particular, the $J^+$-invariant does not change during the $\mu$-perturbation. We then obtain the following

\begin{Corollary}\label{sglihiwle333} Let $\alpha$  be a $T_{k,l}$-type orbit  which is a generic immersion.   There exists a small $\mu_{k,l} >0$ such that for any $\mu < \mu_{k,l}$ all periodic orbits $\delta$ of the second kind which are obtained from $\alpha$ are also generic immersions and have the same $J^+$-invariant as $\alpha$. 
\end{Corollary}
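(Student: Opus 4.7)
The plan is to deduce the corollary from the combination of Proposition \ref{jplusformeusla} (which computes $J^+(\alpha)$) and a standard perturbation argument for periodic orbits, together with the defining properties of the $J^+$-invariant recalled in Section \ref{subsection2.2}.

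First I would set up the continuation. For $\mu=0$ the rotating Kepler problem has the $T_{k,l}$-type orbit $\alpha$ as a non-degenerate periodic orbit in the sense of the existence results of Arenstorf, Barrar, and Schmidt cited in the introduction. By standard continuation theory for non-degenerate periodic orbits of smooth Hamiltonian systems, there exists $\mu_{k,l}^{(1)}>0$ and a smooth $1$-parameter family $\mu \mapsto \delta_\mu$ of periodic orbits of the PCR3BP for $\mu \in [0,\mu_{k,l}^{(1)})$ with $\delta_0 = \alpha$, and moreover the map $\mu \mapsto \delta_\mu$ is continuous in the $C^\infty$-topology of parametrized loops. In particular the configuration-space projections $q_\mu := \pi(\delta_\mu)$ converge to $\pi(\alpha)$ in $C^\infty$ as $\mu \to 0$.

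Second I would argue that each of the three generic-immersion conditions on $\pi(\alpha)$ is an open condition in the $C^2$-topology and hence survives small perturbation. Since by hypothesis $\alpha$ is a generic immersion, $\pi(\alpha)$ has no cusps, only finitely many double points, all of them transverse, and no triple points or self-tangencies. Transversality of a finite set of double points is a $C^1$-open condition, absence of cusps is a $C^1$-open condition on the velocity, and the absence of triple points or self-tangencies is $C^2$-open once one restricts to a suitable tubular neighborhood of each self-intersection. Therefore there exists $\mu_{k,l} \in (0,\mu_{k,l}^{(1)}]$ such that for every $\mu < \mu_{k,l}$ the projection $\pi(\delta_\mu)$ is again a generic immersion and, more importantly, the family $\{\pi(\delta_\mu)\}_{\mu \in [0,\mu_{k,l})}$ is a homotopy of generic immersions---no disaster of type $(I_0)$, $(I_\infty)$, $(I_{-\infty})$, $(II^+)$, $(III)$ or direct self-tangency occurs along the path.

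Third, invoking property (iii) in the characterization of $J^+$ recalled in Section \ref{subsection2.2}, together with the fact that $J^+$ is unchanged under crossings through triple points or inverse self-tangencies, we conclude that $J^+$ is constant along the $\mu$-family, so $J^+(\delta_\mu) = J^+(\alpha)$ for all $\mu < \mu_{k,l}$, and Proposition \ref{jplusformeusla} gives the explicit value. The only delicate point that merits care is the second step, namely checking that no new self-intersection is created under the $\mu$-perturbation near the boundary of the Hill's region of the PCR3BP; but since for $\mu=0$ the orbit $\alpha$ is a generic immersion and the Hill's region varies continuously in $\mu$ away from bifurcation values of the critical energy, shrinking $\mu_{k,l}$ further if necessary ensures this. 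No other step poses any real obstacle---everything else is an immediate consequence of the openness of generic immersion in the $C^2$-topology and of the homotopy invariance of $J^+$.
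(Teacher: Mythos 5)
Your proposal is correct and follows essentially the same route as the paper: the paper's argument is precisely that for sufficiently small $\mu$ the $\mu$-homotopy is a generic homotopy without any disaster, so the $J^+$-invariant is unchanged; you have merely spelled out the continuation step and the $C^1$/$C^2$-openness of the generic-immersion conditions that the paper leaves implicit.
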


\section{Computation of $\mathcal{J}_1$ and $\mathcal{J}_2$ invariants}\label{sectionslkdhf}
  Recall that the winding number around the origin of  retrograde  or   direct $T_{k,l}$-type orbits is given by $k+l$ or $l-k$, respectively. The formulas for the $\mathcal{J}_1$ invariant then follow from its definition and Proposition \ref{jplusformeusla}.

\;\;

In order to  obtain the $\mathcal{J}_2$ invariant, by definition, one should consider the Levi-Civita mapping $L : \C^* \rightarrow \C^*$, $z \mapsto z^2$. Let $K$ be the trajectory of a $T_{k,l}$-type orbit $\alpha$. We parametrize $\alpha$ so that the starting point is one of the perihelions and assume that it lies on the positive $q_1$-axis. We then    observe that since   $L$ is a squaring map, the results in Section \ref{sectionRKP} give rise to the following:

\begin{enumerate}[label=(\roman*)]

\item the curve $L^{-1}(K)$  is invariant under the rotation by the angle $  j\pi/k$ and    is symmetric with  respect to the line $y=(j \pi /2k ) x$, $j=0,1,2,\cdots, 2k-1$, see Lemmas \ref{rotsyme} and \ref{lemmareflds} \vspace{2mm};

\item   if  $k \pm l$ is odd, then  the sets of the arguments of the perihelions and aphelions of $L^{-1}(K)$ are given by
\begin{equation}\label{eqminimumargument2}
\left \{ 0, \frac{ \pi}{k}, \frac{2\pi}{k}, \cdots, \frac{  (2k-1) \pi}{k}  \right \} 
\end{equation}
and
\begin{equation*}\label{eqmaximumargument2}
\left \{   \frac{ \pi}{2k}, \frac{3\pi}{2k}, \cdots,  \frac{ (4 k-1) \pi}{2k} \right \} ,
\end{equation*}
respectively. If  $k\pm l$ is even, then the two sets are equal and given by (\ref{eqminimumargument2}), see Lemma \ref{lemmaargutime}\vspace{2mm};

\item  if $K$ is direct, for each $\theta_0 \in [0,2\pi )$, there exist precisely $2|k-l|$ points (possibly with multiple points) of $L^{-1}(K)$ on the ray $\theta = \theta_0$, provided that $e < e_{k,l}^{\infty}$. If $\alpha$ is retrograde, then the same property holds true with $2(k+l)$, see Lemma \ref{8h98g97gds}\vspace{2mm};

\item  every double point has  argument $ j \pi /2k$ for some $j=0,1,2,\cdots, 4k-1$, see Lemma \ref{corollarypositionofdouble}\vspace{2mm};

\item given $r \in (\sqrt{r_{\min}}, \sqrt{r_{\max}})$, the number of points on $L^{-1}(K)$ with radius $r$ equals either $4k$ if the points are non-double points or $2k$ if the points are double points, where $r_{\min}$ and $r_{\max}$ is minimal respectively maximal radius.

\end{enumerate}

\;

\textit{Case 1.} Either $k>l$ or $k<l$ and $ I_{\text{direct}}^1$; the winding number of $K$  is odd.\\
Recall that the pulled back trajectory $L^{-1}(K)$ consists of a single curve and its winding number  equals the winding number of $K$, i.e., $l-k$. Bearing the facts (i)-(v) in mind,  one can draw  $L^{-1}(K)$ by an algorithm similar to the one given in Section \ref{sdflsdhfeg}  with $2k(|k-l|-1)$ double points and $(|k-l| -1)$ layers.
see Figure \ref{hoyleroot41}. 
\begin{figure}[t]
\begin{subfigure}{0.45\textwidth}
  \centering
  \includegraphics[width=0.85\linewidth]{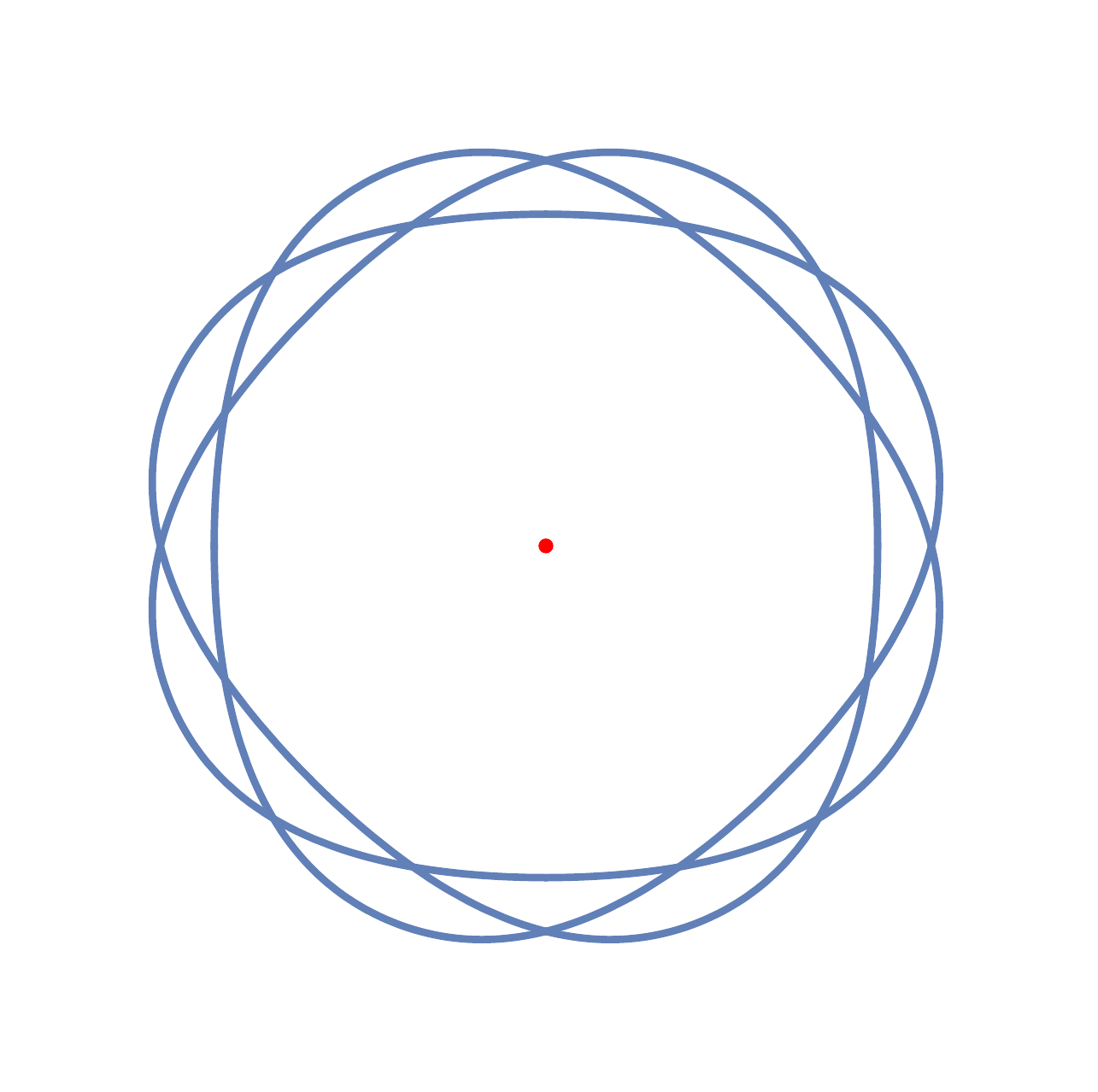}
  \caption{A  pulled back orbit of a direct $T_{4,1}$-type orbit}
 \label{   }
\end{subfigure}
\begin{subfigure}{0.45\textwidth}
  \centering
  \includegraphics[width=0.85\linewidth]{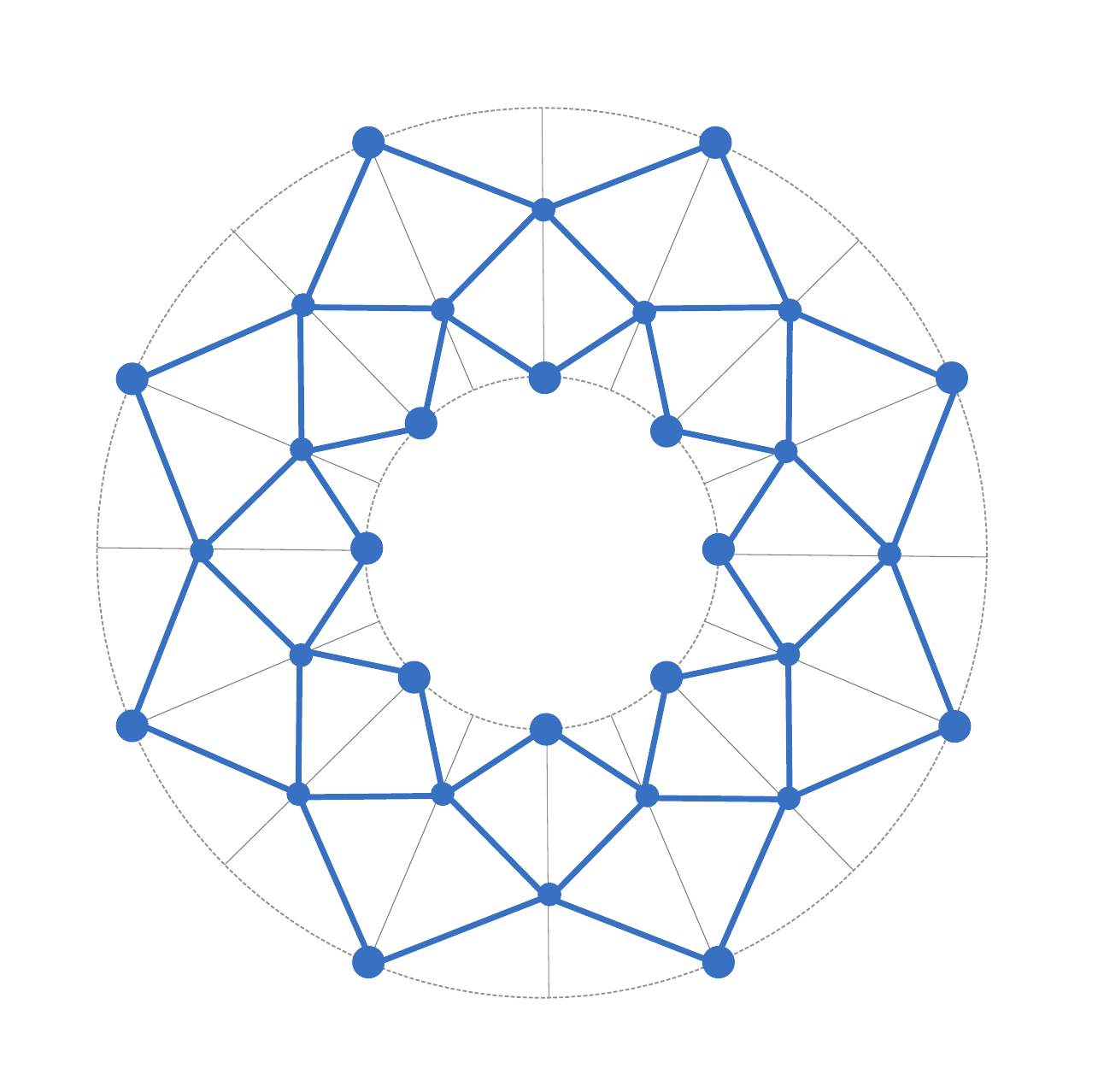}
  \caption{An orbit obtained by the algorithm}
  \label{  }
\end{subfigure}
\caption{ A $L^{-1}(T_{4,1})$-type orbit.  There exist $(2\times 4) \times (4-1-1) = 16$ double points and $4-1-1=2$ layers. Each layer has $2\times 4=8$ connected components. The winding number is $1-4=-3$.}
\label{hoyleroot41}
\end{figure}
It then follows from   Viro's formula  that
\begin{eqnarray*}
\mathcal{J}_2(K) &=& J^+( L^{-1}(K) ) \\
&=& 1 + 2k( |k-l|-1)   - \bigg(  1 \cdot |k-l|^2 + 2k \cdot (|k-l|-1)^2 +2 k \cdot(|k-l|-2)^2 + \cdots + 2k \cdot 1^2   \bigg) \\
&& \;\; \;\;\;\;\;\;\;\; \;\;\; \;\;\; \;\;\; \;\;\; \;\;\; \;\; \;\;+ \bigg(  2k \cdot (|k-l|-1)^2 + 2k \cdot (|k-l|-2)^2 + \cdots + 2k \cdot1^2  \bigg) \\
&=& 1 + 2k(|k-l|-1) -|k-l|^2\\
&=& \begin{cases} (k-1)^2 - l^2 & \; \text{ if $k>l$} \\ 1-2k-3k^2 +4kl - l^2 & \; \text{ if $k<l$ and $I_{\text{direct}}^1$} \end{cases}
\end{eqnarray*}

\;\;\;

\textit{Case 2.}  $k<l$ and $ I_{\text{direct}}^2 \cup I_{\text{retro}}$; the winding number $K$ is odd.\\
We choose a retrograde $T_{k,l}$-type orbit.   Similar to   the case 1, the pulled back trajectory $L^{-1}(K)$ has $2k(k+l-1)$ double points and $(k+l+1)$ layers. Then we compute that
\begin{eqnarray*}
\mathcal{J}_2(K)  &=& 1 + 2k( k+l-1)   - \bigg(  1 \cdot (k+l)^2 + 2k \cdot (k+l-1)^2 + 2k \cdot(k+l-2)^2 + \cdots + 2k \cdot 1^2   \bigg) \\
&& \;\; \;\;\;\;\;\;\;\; \;\;\; \;\;\; \;\;\;\; \;\;\; \;\; \;\;+ \bigg(2  k \cdot (k+l-1)^2 +2 k \cdot (k+l-2)^2 + \cdots + 2k \cdot1^2  \bigg) \\
&=& 1 + 2k(k+l-1) -(k+l)^2\\
&=&   (k-1)^2 - l^2 .
\end{eqnarray*}

\begin{Remark}\rm Note that the   formulas for the previous two cases equal  the ones obtained by Proposition \ref{prorelation}. For example, we compute that if $k>l$
\begin{eqnarray*}
\mathcal{J}_2(K) &=& 2 \mathcal{J}_1 (K) -1\\
&=&  2 \bigg( 1- k + \frac{k^2}{2} - \frac{l^2}{2} \bigg) -1\\
&=& 1 - 2k + k^2 - l^2 \\
&=& (k-1)^2 - l^2.
\end{eqnarray*}
\end{Remark}

\;

\textit{Case 3.}  Either $k>l$ or $k<l$ and $ I_{\text{direct}}^1$; the winding number of $K$ is even. \\
In this case  $L : L^{-1}(K) \rightarrow K$ is also a 2$-$1 covering, but  $L^{-1}(K)$ consists of two generic immersions, say $L^{-1}(K) = \widetilde{K}\cup\widetilde{K}'$.   Fact (ii)  shows that the two curves are related by a $\pi/k$-rotation. Indeed, if $\widetilde{K}$ has a perihelion at $\theta=j \pi/k$ for some $j$, then the reflection fixes the perihelion and hence leaves $\widetilde{K}$ invariant. Since the other component $\widetilde{K}'$ is obtained from $\widetilde{K}$ under $\pi$-rotation, $\widetilde{K}'$ is invariant under the reflection as well. This shows that the $\pi/k$-rotation sends $\widetilde{K}$ to $\widetilde{K}'$. 

We claim that $\widetilde{K}$ has perihelions, aphelions and double points exactly on the rays $\theta=j\pi/k$. To see this, we assume that $\widetilde{K}$ has a perihelion at $\theta=0$. Then the double point of the preimage $L^{-1}(K)$ with the smallest radius on the ray $\theta=\pi/k$ also belongs to $\widetilde{K}$ due to the rotational symmetry given in Fact (i). In view of the reflection symmetry (with respect to the ray $\theta=\pi/k$), we also see that the second double points of $L^{-1}(K)$ on $\theta=0$ lies on $\widetilde{K}$. Proceeding further in a similar way, we find that the $2n$-th, $n\geq1$, double points of $L^{-1}(K)$ on $\theta=0$ belong  to $\widetilde{K}$. Since the perihelion on $\pi/k$ lies on $\widetilde{K}'$, the $(2n-1)$-th, $n\geq 1$, double points on $\theta=0$ belong  to $\widetilde{K}'$. Consequently, the double points on the ray $\theta=\pi/2k$ are single points for $\widetilde{K}$ and $\widetilde{K}'$. It follows that the complement of $\widetilde{K}$ has $(|k-l|/2-1)$ layers and hence there exist $k( |k-l|/2 -1)$ double points by the rotational symmetry (by angle $2\pi/k$), see Figure \ref{hoyleroot51}.
\begin{figure}[t]
\begin{subfigure}{0.45\textwidth}
  \centering
  \includegraphics[width=0.85\linewidth]{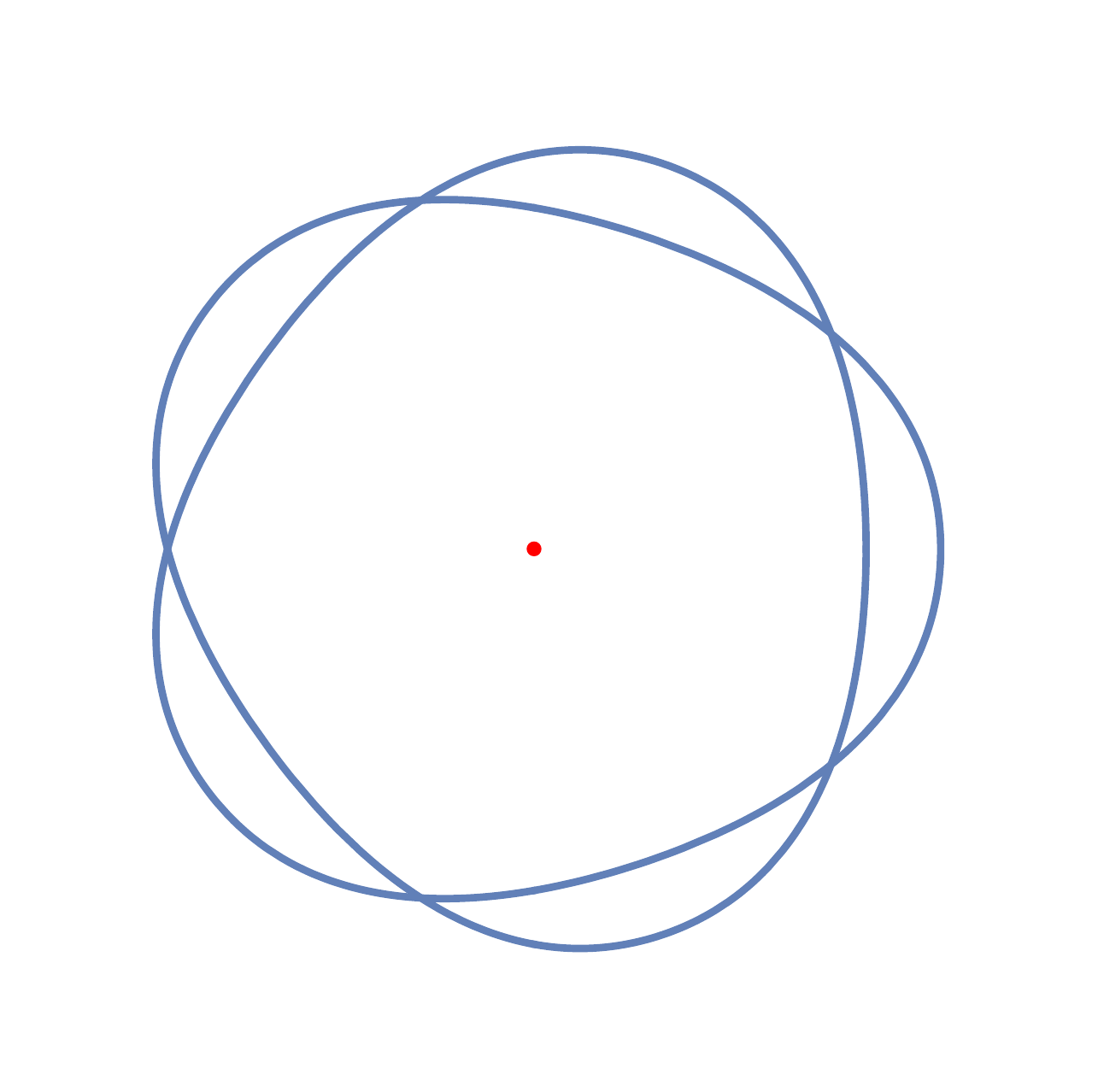}
  \caption{A pulled back orbit of a direct $T_{5,1}$-type orbit}
 \label{   }
\end{subfigure}
\begin{subfigure}{0.45\textwidth}
  \centering
  \includegraphics[width=0.85\linewidth]{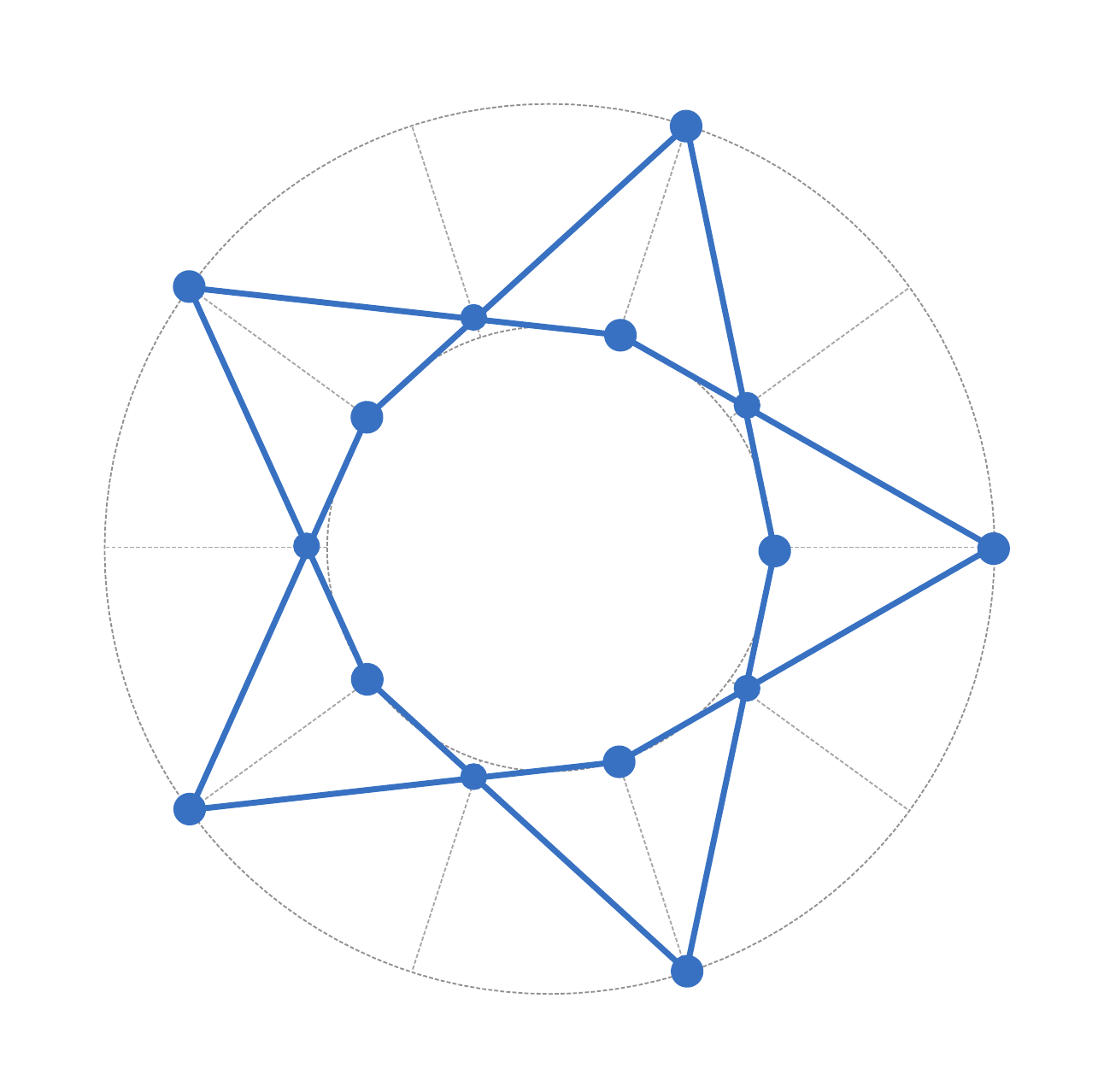}
  \caption{ An orbit obtained by the algorithm}
  \label{  }
\end{subfigure}
\caption{ A $L^{-1}(T_{5,1})$-type orbit.  There exist $5 \times (\frac{5-1}{2}-1) = 5$ double points and $\frac{5-1}{2}-1=1$ layer. Each layer has $5$ connected components. The winding number is $\frac{1-5}{2}=-2$.}
\label{hoyleroot51}
\end{figure}
In view of   $w_0 ( \widetilde{K} ) = (l-k)/2$, using  Viro's formula we find the $\mathcal{J}_2$ invariant  
\begin{eqnarray*}
\mathcal{J}_2(K) &=& J^+( \widetilde{K}  ) \\
&=& 1 + k\bigg( \frac{|k-l|}{2}-1 \bigg)   - \bigg(  1 \cdot \bigg( \frac{|k-l|}{2} \bigg)^2 + k \cdot \bigg( \frac{|k-l|}{2}-1 \bigg)^2   + \cdots + k \cdot 1^2   \bigg) \\
&& \;\; \;\;\;\;\;\;\;\;\;\; \;\;\; \;\;\; \;\;\; \;\; \;\;\; \;\;\; \;\; + \bigg(  k \cdot \bigg( \frac{|k-l|}{2}-1 \bigg)^2 + k \cdot \bigg( \frac{|k-l|}{2}-2 \bigg)^2 + \cdots + k \cdot1^2  \bigg)\\
&=& 1 + k\bigg( \frac{|k-l|}{2}-1 \bigg)   -   \bigg( \frac{|k-l|}{2} \bigg)^2     \\
&=& \begin{cases} 1 - k + \frac{1}{4}(k^2 -l^2) & \; \text{ if  $k>l$,} \\     1 -k -k^2 + \frac{3}{2}kl - \frac{1}{2}l^2           & \; \text{ if $k<l$ and $ I_{\text{direct}}^1$.} \end{cases}
\end{eqnarray*}

\;\;\;

\;\;\; 

\textit{Case 4.}  $k<l$ and $ I_{\text{direct}}^2 \cup I_{\text{retro}}$; the winding number of $K$ is even.\\
Similarly, for a retrograde $T_{k,l}$-type orbit $K$,  we see that $w_0 ( \widetilde{K} ) = (l+k)/2$ and there exist $((k+l)/2-1)$ layers and  $k( (k+l)/2 -1)$ double points. We then compute that
\begin{eqnarray*}
\mathcal{J}_2(K) &=& J^+(\widetilde{K}  ) \\
&=& 1 + k\bigg( \frac{k+l}{2}-1 \bigg)   - \bigg(  1 \cdot \bigg( \frac{k+l}{2} \bigg)^2 + k \cdot \bigg( \frac{k+l}{2}-1 \bigg)^2   + \cdots + k \cdot 1^2   \bigg) \\
&& \;\; \;\;\;\;\;\;\;\;\;\;  \; \;\;\; \;\;\; \;\; \;\;\; \;\;\; \;\; + \bigg(  k \cdot \bigg( \frac{k+l}{2}-1 \bigg)^2 + k \cdot \bigg( \frac{k+l}{2}-2 \bigg)^2 + \cdots + k \cdot1^2  \bigg) \\
&=& 1 + k\bigg( \frac{k+l}{2}-1 \bigg)   -   \bigg( \frac{k+l}{2} \bigg)^2\\
&=& 1 - k - \frac{ kl}{2} - \frac{l^2}{2}.
\end{eqnarray*}

\;\;

We have proven
\begin{Proposition}\label{jplusformeusladddd} Let $\alpha$ be a $T_{k,l}$-type orbit. Its $\mathcal{J}_1$ and $\mathcal{J}_2$  invariants are given by
 \begin{eqnarray*}
\mathcal{J}_1 (\alpha)&=& \begin{cases} 1 - k + \frac{k^2}{2} - \frac{l^2}{2} & \text{ if $k>l$ or if $k<l$ and $\alpha$ is retrograde} \\                1 - k - \frac{3}{2} k^2 + 2kl - \frac{1}{2} l^2         & \text{ if   $k<l$ and $e \in I_{\text{direct}}^1$}  \\               1 - k + \frac{k^2}{2} - \frac{l^2}{2}     & \text{ if   $k<l$ and $ e \in I_{\text{direct}}^2$ }  \end{cases} \\
\mathcal{J}_2 (\alpha)&=& \begin{cases} (k-1)^2  -l^2     & \text{ if  $k>l$ and $w(\alpha)$ is odd}    \\    1 - k + \frac{k^2}{4} - \frac{l^2}{4}     &  \text{ if $k>l$ and  $w(\alpha)$ is even}  \\  1-2k -3k^2 +4kl - l^2  & \text{ if  $k<l$, $ e\in I_{\text{direct}}^1$ and $w(\alpha)$ is odd}  \\ (k-1)^2  -l^2     & \text{ if  $k<l$, $ e \in I_{\text{direct}}^2 \cup I_{\text{retro}}$ and $w(\alpha)$ is odd} \\  1 - k - k^2 + \frac{3}{2}kl - \frac{1}{2}l^2     & \text{ if $k<l$, $ e \in I_{\text{direct}}^1$ and $w(\alpha)$ is even} \\    1 - k - \frac{1}{2}kl - \frac{1}{2}l^2    & \text{ if $k<l$, $ e \in I_{\text{direct}}^2 \cup I_{\text{retro}}$ and $w(\alpha)$ is even},      \end{cases}
\end{eqnarray*}
where $w(\alpha)$ denotes the winding number of $\alpha$ with respect to the origin.
\end{Proposition}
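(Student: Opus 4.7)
The $\mathcal{J}_1$ invariant is immediate from its definition $\mathcal{J}_1(\alpha) = J^+(\alpha) + (w_0(\alpha))^2/2$ combined with Proposition \ref{jplusformeusla}. Since direct $T_{k,l}$-type orbits have winding number $l-k$ around the origin and retrograde $T_{k,l}$-type orbits have winding number $k+l$, I would plug in the four $J^+$ values from Proposition \ref{jplusformeusla} and simplify. The coincidence that $\mathcal{J}_1 = 1 - k + k^2/2 - l^2/2$ in the $k>l$, the $k<l$ retrograde, and the $k<l$ direct $I_{\text{direct}}^2$ cases is the content of Proposition \ref{corstar} (the invariant does not change under $(I_\infty)$ or $(I_0)$), and serves as a useful consistency check.

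For $\mathcal{J}_2$, the strategy is to apply Viro's formula (Theorem \ref{virotheorem}) to $L^{-1}(K)$, where $L(z)=z^2$ is the Levi-Civita map and $K=\alpha(S^1)$. First I would establish the structural properties (i)--(v) of $L^{-1}(K)$ listed at the start of Section \ref{sectionslkdhf}; these follow from the corresponding facts about $K$ (Lemmas \ref{rotsyme}, \ref{lemmareflds}, \ref{lemmaargutime}, \ref{8h98g97gds}, \ref{corollarypositionofdouble}) by pulling back under the branched double cover $L$. The key observation is that $L$ halves angles, so the $2\pi l/k$-rotational symmetry of $K$ lifts to a $\pi l/k$-rotational symmetry of $L^{-1}(K)$, and reflection axes at angles $j\pi/k$ lift to axes at angles $j\pi/(2k)$. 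The radial data $r_{\min}$ and $r_{\max}$ rescale to $\sqrt{r_{\min}}$ and $\sqrt{r_{\max}}$.

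When $w_0(\alpha)$ is odd (that is, $k$ and $l$ have opposite parity), $L^{-1}(K)$ is connected, and I would construct it by the same algorithm used in Section \ref{sdflsdhfeg} but on the refined angular grid $\{j\pi/(2k)\}$. This produces $2k(|k-l|-1)$ double points organized in $|k-l|-1$ layers of $2k$ components each (in the direct case on $I_{\text{direct}}^1$), with winding numbers decreasing by one per layer starting from $l-k$ at the center component. Viro's formula then yields $\mathcal{J}_2$; for $k>l$, consistency with $\mathcal{J}_2 = 2\mathcal{J}_1 - 1$ from Proposition \ref{prorelation} provides a sanity check. The retrograde case is identical with $k+l$ in place of $|k-l|$, and the case $k<l$ with $e\in I_{\text{direct}}^2$ differs from $I_{\text{direct}}^1$ by the appearance of interior loops, whose contribution can be computed either directly or by using \cite[Lemma 4]{invariant}, exactly as in the Remark in Section \ref{sectheorem1}.

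The main obstacle is the even winding number case, where $L^{-1}(K) = \widetilde{K}\sqcup\widetilde{K}'$ with the two components related by a $\pi$-rotation, and $\mathcal{J}_2$ asks for $J^+$ of one component only. I would identify precisely which perihelia, aphelia and double points of $L^{-1}(K)$ lie on $\widetilde{K}$ as follows: assuming $\widetilde{K}$ carries the perihelion on the ray $\theta=0$, the $2\pi/k$-rotational symmetry (being half the $\pi/k$-symmetry of $L^{-1}(K)$) together with the reflection across this ray forces the even-indexed radial double points on $\theta=0$ to lie on $\widetilde{K}$ while the odd-indexed ones lie on $\widetilde{K}'$; propagating this by rotation determines $\widetilde{K}$ completely. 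The result is $k(|k-l|/2-1)$ double points arranged in $|k-l|/2-1$ layers of $k$ components each, with winding number $(l-k)/2$ at the center (respectively $(k+l)/2$ in the retrograde case). A single application of Viro's formula then gives $\mathcal{J}_2$ in the remaining cases, with the interior-loop correction for $k<l$, $e\in I_{\text{direct}}^2$ handled as above. Finally, Proposition \ref{corstar} guarantees that these values do not depend on the choice of representative within each subinterval of the $e$-homotopy.
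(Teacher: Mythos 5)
Your proposal is correct and follows essentially the same route as the paper: $\mathcal{J}_1$ read off from the definition together with Proposition \ref{jplusformeusla}, and $\mathcal{J}_2$ computed by establishing the symmetry and double-point structure of the Levi-Civita preimage $L^{-1}(K)$, splitting on the parity of the winding number, identifying which perihelia, aphelia and double points lie on a single component $\widetilde{K}$ in the even case, and then applying Viro's formula layer by layer. The only cosmetic difference is that for $k<l$ on $I_{\text{direct}}^2$ the paper simply evaluates on a retrograde representative of $I_{\text{direct}}^2\cup I_{\text{retro}}$ (legitimate since $\mathcal{J}_1,\mathcal{J}_2$ survive the collision event) rather than correcting for the interior loops directly, but both computations give the same result.
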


By the argument before Corollary \ref{sglihiwle333} we obtain the following
 
\begin{Corollary}\label{sglihiwle33dddddddd3} Let $\alpha$ be a $T_{k,l}$-type orbit which is a generic immersion. There exists a small $\mu_{k,l} >0$ such that for any $\mu < \mu_{k,l}$ all periodic orbits $\delta$ of the second kind which are obtained from $\alpha$ are generic immersions and have the same  invariants $\mathcal{J}_1$ and $\mathcal{J}_2$ as $\alpha$. 
\end{Corollary}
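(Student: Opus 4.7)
The plan is to parallel the argument sketched just before Corollary~\ref{sglihiwle333}, upgrading it from tracking only $J^{+}$ to tracking the full data needed by $\mathcal{J}_1$ and $\mathcal{J}_2$. By Proposition~\ref{jplusformeusladddd} the invariants $\mathcal{J}_1(\alpha)$ and $\mathcal{J}_2(\alpha)$ are already computed, so it suffices to prove they are preserved as $\alpha$ continues into periodic orbits of the second kind $\delta$ for small $\mu > 0$.

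First I would fix the generic immersion $\alpha$ and invoke the continuation results of Arenstorf, Barrar, Giacaglia and Schmidt cited in the introduction to obtain, for all $\mu$ in some interval $[0,\mu_{k,l})$, a periodic orbit $\delta(\mu)$ in the PCR3BP depending $C^{1}$-continuously on $\mu$ and reducing to $\alpha$ at $\mu = 0$. Genericity is an open condition: transverse double points, absence of triple points, absence of self-tangencies and absence of cusps (at the origin or at $\partial\mathcal{K}_{c}^{b}$) are all stable under small $C^{1}$-perturbations. Since $\alpha$ is a compact immersed curve keeping positive distance from both the origin and $\partial\mathcal{K}_{c}^{b}$, these properties persist for $\mu < \mu_{k,l}$ after possibly shrinking the interval, and in particular $\delta(\mu)$ is itself a generic immersion in $\C^{\ast}$.

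Next I would argue that the one-parameter family $\bigl(\delta(\mu)\bigr)_{\mu \in [0,\mu_{k,l})}$ is a Stark-Zeeman homotopy in the sense of Definition~\ref{defhomotopy} with no events at all. The persistence argument of the previous paragraph rules out $(I_{0})$, $(I_{\infty})$, $(II^{+})$ and $(III)$ during the continuation. Moreover, because $\alpha$ lies in the bounded component $\mathcal{K}_{c}^{b}$ near the heavier primary and $\mathcal{K}_{c}^{b}$ depends continuously on $\mu$, the orbit $\delta(\mu)$ stays inside that component, so the event $(I_{-\infty})$ (which would require the orbit to touch $\partial\mathcal{K}_{c}^{u}$) is excluded. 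Invoking the Stark-Zeeman invariance recalled in Section~\ref{seubsection2.3} then yields $\mathcal{J}_1(\delta(\mu)) = \mathcal{J}_1(\alpha)$ and $\mathcal{J}_2(\delta(\mu)) = \mathcal{J}_2(\alpha)$ for all $\mu < \mu_{k,l}$.

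The main obstacle is making the choice of $\mu_{k,l}$ effective enough to rule out every disaster simultaneously. While transversality, non-cusp and no-triple-point conditions are each individually open, one must check that the various thresholds do not shrink pathologically to $0$ as the family $\mu \mapsto \delta(\mu)$ evolves. Because the positive gaps between $\alpha$ and the origin, between $\alpha$ and $\partial\mathcal{K}_{c}^{b}$, and between tangent directions at distinct preimages of a double point are all bounded below by compactness, a single $\mu_{k,l}>0$ can be chosen uniformly, but extracting a quantitative bound requires mild control on the $C^{1}$-modulus of the continuation $\mu \mapsto \delta(\mu)$, which is where the detailed perturbation analysis of the PCR3BP enters.
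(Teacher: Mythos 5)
Your proposal is correct and follows essentially the same route as the paper: the paper's proof of this corollary is simply the remark preceding Corollary \ref{sglihiwle333}, namely that for sufficiently small $\mu$ the $\mu$-homotopy from $\alpha$ to $\delta$ is a (Stark--Zeeman) homotopy without any disasters, so the invariants computed in Proposition \ref{jplusformeusladddd} persist. You merely spell out the openness/compactness details that the paper leaves implicit.
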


\;\;\;

\end{document}